\newcommand{\C}{{\mathbb C}}       
\newcommand{\R}{{\mathbb R}}       
\newcommand{\N}{{\mathbb N}}
\newcommand{\Z}{{\mathbb Z}}       
\newcommand{\HH}{{\mathcal H}}
\newcommand{\diam}{{\rm diam}}
\newcommand{\dist}{{\rm dist}}
\newcommand{\rf}[1]{{(\ref{#1})}}
\newcommand{\supp}{\operatorname{supp}}
\newcommand{\vphi}{{\varphi}}
\newcommand{\ve}{{\varepsilon}}
\newcommand{\vv}{{\vspace{2mm}}}
\newcommand{\vvv}{{\vspace{3mm}}}
\newcommand{\wt}[1]{{\widetilde{#1}}}
\newcommand{\bx}{\bar x}
\newcommand{\by}{\bar y}
\newcommand{\noi}{\noindent}
\def\Xint#1{\mathchoice
{\XXint\displaystyle\textstyle{#1}}%
{\XXint\textstyle\scriptstyle{#1}}%
{\XXint\scriptstyle\scriptscriptstyle{#1}}%
{\XXint\scriptscriptstyle\scriptscriptstyle{#1}}%
\!\int}
\def\XXint#1#2#3{{\setbox0=\hbox{$#1{#2#3}{\int}$ }
\vcenter{\hbox{$#2#3$ }}\kern-.58\wd0}}
\def\avint{\;\Xint-}
\definecolor{ffffff}{rgb}{1.0,1.0,1.0}
\definecolor{qqqqff}{rgb}{0.0,0.0,1.0}
\definecolor{ffqqqq}{rgb}{1.0,0.0,0.0}
\definecolor{zzzzqq}{rgb}{0.6,0.6,0.0}
\definecolor{marronet}{rgb}{0.6,0.2,0}
\definecolor{Negre}{rgb}{0,0,0}
\definecolor{vermell}{rgb}{0.8,0.05,0.05}
\definecolor{blau}{rgb}{0.3,0.2,1.}
\definecolor{blauclar}{rgb}{0.,0.,1.}
\definecolor{grisfosc}{rgb}{0.25098039215686274,0.25098039215686274,0.25098039215686274}
\definecolor{verd}{rgb}{0.1,0.6,0.1}
\definecolor{taronja}{rgb}{0.9,0.6,0.05}
\definecolor{vermellclar}{rgb}{1.,0.,0.}
\definecolor{verdet}{rgb}{0,0.8,0.1}
\definecolor{blauverd}{rgb}{0,0.4,0.2}
\definecolor{grisclar}{rgb}{0.6274509803921569,0.6274509803921569,0.6274509803921569}
\newtheorem{theorem}{Theorem}[section]
\newtheorem{lemma}[theorem]{Lemma}
\newtheorem{coro}[theorem]{Corollary}
\newtheorem{propo}[theorem]{Proposition}
\newtheorem*{theorem*}{Theorem}
\theoremstyle{definition}
\theoremstyle{remark}
\newtheorem{rem}[theorem]{Remark}
\numberwithin{equation}{section}
\newcommand{\brem}{\begin{rem}}
\newcommand{\erem}{\end{rem}}
\begin{document}

\title[Removable singularities for solutions of the fractional Heat equation]{Removable singularities for solutions of the fractional Heat equation in time varying domains}

\author{Joan Mateu}
\address{Joan Mateu\\
Departament de Matem\`atiques 
\\
 Universitat Aut\`onoma de Barcelona
 \\
08193 Bellaterra (Barcelona), Catalonia.
}
\email{mateu@mat.uab.cat}
\author{Laura Prat}
\address{Laura Prat\\
Departament de Matem\`atiques 
\\
Universitat Aut\`onoma de Barcelona
\\
08193 Bellaterra (Barcelona), Catalonia.
\\
}
\email{laurapb@mat.uab.cat
}

\thanks{ L.P. was supported by PID2020-114167GB-I00 (MINECO, Spain) and J.M. was supported by PID2020-112881GB-I00 (MINECO, Spain).}

\begin{abstract}
 In this paper we study  removable singularities for solutions of the fractional heat equation in time varying domains. We introduce associated capacities and we study some of its metric and geometric properties.
\end{abstract}

\maketitle

\section{Introduction}

In the present paper we study  removable singularities for solutions of the fractional heat equation in time varying domains.  Our main motivation comes from the paper \cite{mateuprattolsa} where we studied removable singularities for regular $(1,1/2)-$Lipschitz solutions of the heat equation in time varying domains. Our ambient space is $\R^{N+1}$ with a generic point denoted as $\bar x=(x,t)\in\R^{N+1}$, where $x\in\R^N$ and $t\in\R$.  For a smooth function $f$ depending on $(x,t)\in\R^{N+1}$, the heat equation is just \begin{equation}\label{gaussian}\Theta(f)=\Delta f+\partial_tf=0,\end{equation} with fundamental solution $\displaystyle W(x,t)=(4\pi t)^{-N/2}\exp(-|x|^2/4t)$ for $t>0$ and $0$ if $t\leq 0$.

 We set $0<s<1$ and let $\Theta^s$ denote the fractional heat operator, 
$\Theta^s=(-\Delta)^s+\partial_t.$
Then, for a smooth function $f$ depending on $(x,t)\in\R^{N+1}$,
$$\Theta^s(f) = (-\Delta)^s f +\partial_t f=0$$
is just the fractional heat equation.  The pseudodifferential operator $(-\Delta)^s=(-\Delta_x)^s$, $0<s<1$, is the fractional Laplacian with respect to the $x$ variable. It may be defined through its Fourier transform,  $\displaystyle \widehat{(-\Delta)^s }f(\xi)=|\xi|^{2s}\hat f(\xi)$, or by its representation $$(-\Delta)^sf(x)=c(n,s)\int_{\R^N}\frac{f(x)-f(y)}{|x-y|^{N+2s}}dy,$$ where $c(n,s)$ is a normalization constant (see \cite{landkof} and \cite{stein} for its basic properties). The standard Laplace operator, $-\Delta$, is recovered when taking the limit $s\to 1$ (see \cite[Section 4]{npv}), but there is a big difference between the local operator $-\Delta$, that appears in the classical heat equation and represents Brownian motion,  and the non-local family $(-\Delta)^s$, $0<s<1$, which are generators of L\'evy processes in Stochastic PDEs (see \cite{4} i \cite{6}). 

Given $\bar x= (x,t)$ and $\bar y = (y,u)$, with $x,y\in\R^N$, $t,u\in\R$,
we consider the 
$s$-parabolic distance in $\R^{N+1}$ defined by
$$\dist_p(\bar x,\bar y) = \max\big(|x-y|, \,|t-u|^{1/{2s}}\big).$$
Sometimes we also write
$|\bar x - \bar y|_p$ instead of $\dist_p(\bar x,\bar y)$. Notice that for $s=1/2$,  dist$_p$ is comparable to the usual distance.
We denote by $B_p(\bar x,r)$ an $s$-parabolic ball (i.e., in the distance $\dist_p$) centered at $\bar x$ with radius $r$. By an $s$-parabolic cube $Q$ of side length $\ell$, we mean a set of the form
$$I_1 \times\ldots\times I_N\times I_{N+1},$$
where $I_1,\ldots,I_N$ are intervals in $\R$ with length $\ell$, and $I_{N+1}$ is another interval with
length $\ell^{2s}$. We write $\ell(Q)=\ell$. 

We say that a Borel measure $\mu$ in $\R^{N+1}$ has upper parabolic growth of degree $N+2s-1$, $0<s<1$, if there
exists some constant $C$ such that
\begin{equation}\label{C=1}
\mu(B_p(\bar x,r))\leq C r^{N+2s-1}\quad\mbox{ for all $\bar x\in\R^{N+1},\, r>0$.}
\end{equation}
Clearly, this is equivalent to saying that any $s$-parabolic cube $Q\subset\R^{N+1}$ satisfies $\mu(Q)\leq C'
\ell(Q)^{N+2s-1}$. 
Now, let $s=1/2$.
We say that a compact set $E\subset \R^{N+1}$ is removable for the $1/2-$fractional heat equation (or removable for bounded $1/2-$caloric functions)  if any bounded function $f:\R^{N+1}\to\R$, i.e. $\displaystyle\|f\|_{L^\infty(\R^{N+1})}<\infty$, 
satisfying the $\frac12-$fractional heat equation in $\mathbb R^{N+1}\setminus E$, also satisfies the $1/2-$fractional heat equation in the whole space $\R^{N+1}$. 

Given a compact set $E\subset \R^{N+1}$, we define its $1/2-$fractional caloric capacity by
\begin{equation}\label{def1}
\gamma^{1/2}_\Theta(E) =\sup\{|\langle\nu,1\rangle|:  \nu\in\mathcal D', \,\supp\nu\subset E \mbox{ and } \|P*\nu\|_{L^\infty(\R^{N+1})} \leq 1\},
\end{equation}
where  $\mathcal D'$ is the space of distributions in $\R^{N+1}$ and
$P(x,t)$ denotes the fundamental solution of the $1/2-$fractional heat equation in  $\R^{N+1}$, that  is 
\begin{equation}\label{kernel}
P(x,t) =\left\{\begin{array}{l} \cfrac{t}{(t^2+|x|^2)^{\frac{N+1}2}}\quad \mbox{if }\; t>0\\\\\quad0\hspace{2.65cm}\mbox{if }\;t\leq 0\end{array}\right..
\end{equation}
The fractional heat equation can be solved (by applying Fourier transform) for all $0<s\leq 1$ by means of the fundamental solution $P_s(x,t)$, which is the inverse transform of the function $e^{-|\xi|^{2s}t}$. It is worth mentioning that only in the particular cases $s=1$ and $s=1/2$ the kernel is known to be explicit. In the first case we get the Gaussian kernel for the standard heat equation $\displaystyle W(x,t)=(4\pi t)^{-N/2}e^{-|x|^2/4t}$ and in the fractional case $s=1/2$ we have \eqref{kernel}. For $0<s<1$, there is no explicit fundamental solution for the $s-$fractional heat equation. In the 1960s, the probabilists Blumenthal and Getoor \cite[Theorem 2.1]{blumenthalgetoor} generalized in a precise way the power-like tail behaviour observed in the case $s=1/2$ to the other values of $0<s<1$, that is, they established that the fundamental solution, $P_s(x,t)$, of the fractional heat equation, for $0<s<1$, satisfies $$ P_s(x,t)\approx\frac{t}{(t^{1/s}+|x|^2)^{\frac{N+2s}2}}$$ when $ t>0$ (and $0$ for $t\leq 0$). Here $a\approx b$ means that $a/b$ is uniformly bounded above and below by a constant. Notice the marked difference with the Gaussian kernel $W(x,t)$ of the heat equation \eqref{gaussian} (case $s=1$). The behaviour as $x$ tends to infinity of $P_s$ is power-like while $W$ has exponential spatial decay. 

In the paper, we will also introduce and study some $s$-fractional caloric capacities $\gamma_\Theta^s$, $0<s<1$, associated with the kernels $P_s$ of the $s$-fractional heat equation.

We shall now give a brief description of the main results in the paper. 
Section \ref{one} includes some estimates of the kernels $P_s(x,t)$ ($0<s<1$) and its derivatives, that will be needed in the rest of the paper. The four next sections deal with the case $s=1/2$. More concretely, in Section \ref{section-localization} we prove a localization result for $s=1/2$, that is, for a distribution $\nu$, we localize the potential  $P*\nu$ in the $L^\infty-$norm. The localization method for the Cauchy potential $\nu*1/z$ in the plane is a basic tool 
 developed by A.G. Vitushkin in the theory of rational approximation in the plane. This was later adapted in \cite{paramonov} for Riesz potential $\nu*x/|x|^{N}$ in $\R^N$ and used in problems of $\mathcal{C}^1-$harmonic approximation. These localization results have also been essential to prove the semiadditivity of analytic capacity and of Lipschitz harmonic capacity, see \cite{tolsasemiad} and \cite{volbergsemiad} respectively (see  also \cite{tams} for other related capacities).
 
 In Section \ref{section-capacities} we study the connection between $\frac12-$fractional caloric removability and the capacity $\gamma_\Theta^{1/2}$.  In particular, we show that
 a compact set $E\subset\R^{N+1}$ is $\frac12-$fractional caloric removable if and only if $\gamma_\Theta^{1/2}(E)=0$. 
  We also compare the capacity $\gamma_{\Theta}^{1/2}$ to the Hausdorff content $\mathcal{H}_{\infty}^{N}$ and we prove that if $E$ has zero $N-$dimensional Hausdorff measure, i.e., $\HH^{N}(E)=0$, then  $\gamma_{\Theta}^{1/2}(E)=0$ too. In the converse direction, we show that if $E$ has Hausdorff dimension larger than $N$, then $\gamma_{\Theta}^{1/2}(E)$ is positive. Hence, the critical dimension for $\frac12-$fractional caloric capacity 
 (and thus for $\frac12-$fractional caloric removability)
 occurs in dimension $N$, in accordance with the classical case.  Also by analogy with the classical case, one should expect that $\gamma_\Theta^{1/2}(E)>0$ if and only if $\gamma_{\Theta,+}^{1/2}(E)>0$ or even that both capacities are comparable (the definition of $\gamma_{\Theta,+}^{1/2}$ is the same as \rf{def1} but instead of distributions one considers positive measures). However, there is a big obstacle when trying to follow this approach. Namely, the kernel
$P$ is not antisymmetric and thus, if $\nu$ is such that $T\nu=P*\nu$ is in $L^\infty(\R^{N+1})$, apparently one cannot get any useful information regarding $T^*\nu$ ($T^*$ being the dual operator to $T$). This prevents any direct application of the usual $T1$ or $Tb$ theorems from Calder\'on-Zygmund theory, which are  essential tools in the case of analytic capacity and Lipschitz harmonic capacity. Hence, In Section \ref{section-capacities}, due to the lack of antisymmetry of our kernel $P$, we also introduce a new capacity $\wt\gamma_{\Theta,+}^{1/2}$.  We set
$\wt\gamma_{\Theta,+}(E)=\sup\mu(E),$
the supremum taken over all positive measures supported on $E$ with
$\displaystyle\|T\mu\|_{L^\infty(\R^{N+1})} \leq 1$ and $\|T^*\mu\|_{L^\infty(\R^{N+1})} \leq 1.$ Clearly $\gamma_\Theta^{1/2}\geq\gamma_{\Theta,+}^{1/2}\geq \wt\gamma_{\Theta,+}^{1/2}$. We show that the capacity $\wt\gamma_{\Theta,+}^{1/2}$ can be characterized in terms of the $L^2$-norm of $T$. 

In Sections \ref{section-removablesets} and \ref{section-comparability}, we give some concrete examples of $\frac12-$caloric removable and non-removable sets with Hausdorff dimension $N$. In particular, in the first section we construct a self similar Cantor set in $\R^{N+1}$ with positive and finite Hausdorff $N$-dimensional measure which is $\frac12-$caloric removable and in the second one we show that on the plane, the capacity $\gamma_\Theta^{1/2}$ vanishes on vertical segments and is positive on horitzontal ones. These examples allow us to deduce that, in dimension two, neither analytic capacity nor newtonian capacity are comparable to the $1/2-$fractional caloric capacity $\gamma_\Theta^{1/2}$ (although these three capacities have the same critical dimension). 

Sections \ref{section-s1} and \ref{section-s2} are devoted to the study of the $s-$fractional capacities $\gamma_\Theta^s$, when $1/2<s<1$ and $0<s<1/2$ respectively. In the first case, namely $1/2<s<1,$ we are able to show that the critical dimension for the $\gamma_\Theta^s$-capacity is $N+2s-1$. We have had several technical problems when trying to prove this statement for the case $0<s<1/2$ but we can show that sets $E$ with zero $(N+2s-1)-$dimensional Hausdorff measure, i.e., $\HH^{N+2s-1}(E)=0$, have $\gamma_{\Theta}^{s}(E)=0$.

\vv
Some comments about the notation used in the paper: as usual, the letter $C$ stands for an absolute constant which may change its value at different occurrences.
The notation $A\lesssim B$ means that there is a positive absolute constant $C$ such that $A\leq CB$. Also, $A\approx B$ is equivalent to $A\lesssim B\lesssim A$.
The gradient symbol $\nabla$ refers to $(\nabla_x,\partial_t)$, with $x\in\R^N$ and $t\in\R$.

\section{Some  preliminary estimates on the kernels $P$ and $P_s$}\label{one}

In the next two lemmas we will obtain upper bounds for the kernels $P(x,t)$, $P_s(x,t)$ and its derivatives. We will need them later.

\begin{lemma}\label{bound}
For any $\bar x=(x,t)\in\R^{N+1}$, $x\in\R^N$ and $t\in\R$, the following holds:
\begin{itemize}
\item[(a)] $P(\bar x)\lesssim \dfrac1{|\bar x|^{N}}$\; for all $\bar x\neq0$.

\item[(b)] $|\nabla_x P(\bar x)|\lesssim \dfrac1{|\bar x|^{N+1}}$ for all $\bar x\neq0$\;and \;
$|\partial_t P(\bar x)|\lesssim \dfrac1{|\bar x|^{N+1}}$ for all $\bar x\neq(x,0)$.

\item[(c)] For all $\bar x,\bar x'\in \R^{N+1}$ such that $|\bar x- \bar x'|\le |\bar x|/2$, $\bar x\neq0$,
$$|P(\bar x) - P(\bar x')|\lesssim \frac{|\bar x - \bar x'|}{|\bar x|^{N+1}}.$$
\end{itemize}
\end{lemma}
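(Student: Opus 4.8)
The plan is to derive all three estimates directly from the explicit formula, exploiting that for $s=1/2$ the $s$-parabolic norm $|\bar x|_p=\max(|x|,|t|)$ is comparable to the Euclidean norm $|\bar x|=(|x|^2+t^2)^{1/2}$, so I may freely replace $(t^2+|x|^2)^{1/2}$ by $|\bar x|$ up to constants. The one elementary inequality I will use repeatedly is $|t|\le(t^2+|x|^2)^{1/2}$ and, similarly, $|t|\,|x|\le t^2+|x|^2$. With this, part (a) is immediate: on $\{t>0\}$ one has $P(\bar x)=t\,(t^2+|x|^2)^{-(N+1)/2}\le(t^2+|x|^2)^{-N/2}\approx|\bar x|^{-N}$, and on $\{t\le0\}$ the bound is trivial since $P\equiv0$ there.

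For part (b) I would simply differentiate on the open half-space $\{t>0\}$, obtaining
\[
\nabla_x P(x,t)=-(N+1)\,t\,x\,(t^2+|x|^2)^{-\frac{N+3}2},\qquad
\partial_t P(x,t)=(|x|^2-Nt^2)\,(t^2+|x|^2)^{-\frac{N+3}2}.
\]
In both cases the numerator is bounded by a constant times $t^2+|x|^2$ (use $|t|\,|x|\le t^2+|x|^2$ for the first and $\big||x|^2-Nt^2\big|\le(N+1)(t^2+|x|^2)$ for the second), so each quantity is $\lesssim(t^2+|x|^2)^{-(N+1)/2}\approx|\bar x|^{-(N+1)}$. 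The spatial-gradient bound persists trivially for $t\le0$; for $\partial_tP$ one must exclude the hyperplane $\{t=0\}$, since there the right-hand $t$-derivative of $P$ equals $|x|^{-(N+1)}\neq0$ while the left-hand one is $0$, so $\partial_tP$ does not exist — this is precisely the hypothesis $\bar x\neq(x,0)$ in the statement.

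For part (c), assuming $|\bar x-\bar x'|\le|\bar x|/2$ and $\bar x\neq0$, I first note the geometric fact that every point $\bar y$ of the segment $[\bar x,\bar x']$ satisfies $|\bar y|\ge|\bar x|-|\bar x-\bar x'|\ge|\bar x|/2$, and in particular $|\bar x'|\approx|\bar x|$. Then I split into three cases according to the signs of the time coordinates $t,t'$. If $t,t'>0$, the whole segment lies in $\{t>0\}$, where $P$ is $C^1$, so by the fundamental theorem of calculus along the segment together with part (b),
\[
|P(\bar x)-P(\bar x')|\le|\bar x-\bar x'|\int_0^1\big|\nabla P\big((1-\theta)\bar x+\theta\bar x'\big)\big|\,d\theta\lesssim\frac{|\bar x-\bar x'|}{|\bar x|^{N+1}}.
\]
If $t,t'\le0$, then $P(\bar x)=P(\bar x')=0$ and there is nothing to prove. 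Finally, in the mixed case, say $t>0\ge t'$ (the case $t'>0\ge t$ being symmetric via $|\bar x'|\approx|\bar x|$), one has $P(\bar x')=0$ and $0<t\le t-t'=|t-t'|\le|\bar x-\bar x'|$, so the computation from part (a) gives $|P(\bar x)-P(\bar x')|=P(\bar x)\le|\bar x-\bar x'|\,(t^2+|x|^2)^{-(N+1)/2}\approx|\bar x-\bar x'|\,|\bar x|^{-(N+1)}$.

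The only subtlety — and it is the one point requiring any care — is that $P$ fails to be $C^1$ across the hyperplane $\{t=0\}$, so one cannot apply a mean value estimate along an arbitrary segment; the case analysis in (c) is designed exactly to isolate the troublesome configuration (a jump from one side of $\{t=0\}$ to the other) and handle it by the crude bound of (a) rather than by differentiation. Everything else is a routine computation with the explicit kernel.
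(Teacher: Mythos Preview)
Your proof is correct and follows essentially the same approach as the paper's: explicit differentiation for (a) and (b), and for (c) a case split according to the signs of $t,t'$ so as to avoid the non-differentiability of $P$ across $\{t=0\}$. The only cosmetic difference is that in the case $t,t'>0$ you apply the mean value theorem along the full segment $[\bar x,\bar x']$, whereas the paper introduces the intermediate point $\hat x=(x',t)$ and estimates the spatial and temporal increments separately; both lead to the same bound via part (b).
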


Notice that the kernel P is not differentiable with respect to $t$ at the points $(x,0),\;x\in\R^N$.

\begin{proof}
The estimate in (a) comes from the explicit expression of P. For the estimates in (b) we compute
$$
\nabla_xP(x,t)=-\frac{(N+1)xt}{(|x|^2+t^2)^{\frac{N+3}{2}}}, 
$$
and so we get $|\nabla_x P(\bar x)|\lesssim \dfrac1{|\bar x|^{N+1}}.$ On the other hand, for $t\neq 0$
$$
\partial_tP(x,t)=\frac{1}{(|x|^2+t^2)^{\frac{N+1}{2}}}-\frac{(N+1)t^2}{(|x|^2+t^2)^{\frac{N+3}{2}}}. 
$$
Then $|\partial_t P(\bar x)|\lesssim \dfrac1{|\bar x|^{N+1}}$ for all $\bar x\neq(x,0),\, x\in \mathbb R^N $.

 Finally, (c) will be obtained using  (b). Indeed,
given $\bar x,\bar x'\in \R^{N+1}$ such that $|\bar x- \bar x'|\le |\bar x|/2$, write
$$\bar x = (x,t),\quad \bar x' = (x',t'),\quad \hat x = (x',t).$$
Then 
\begin{equation}\label{kernelestimate}
|P(\bar x) - P(\bar x')| \leq |P(\bar x) - P(\hat x)| + |P(\hat x) - P(\bar x')|
\end{equation}

The first term in the above inequality is bounded by 
$$|x-x'|\,\sup_{y\in[x,x']} |\nabla_x P(y,t)|\lesssim  
\frac{| x - x'|}{|\bar x|^{N+1}} \leq \frac{|\bar x -\bar x'|}{|\bar x|^{N+1}}.$$

For the second term in \eqref{kernelestimate} let $t>t'$. If $t'>0$, then
$$
|t-t'|\,\sup_{s\in[t',t]} |\partial_t P(x',s)| 
\lesssim  
  \frac{|t - t'|}{|\bar x|^{N+1}} \leq \frac{|\bar x -\bar x'|}{|\bar x|^{N+1}}.
$$

If $t<0$, $|P(\hat x) - P(\bar x')|=0$ and if $t>0$ and $t'<0$, then 
$$|P(x',t)-P(x',t')|=|P(x',t)-P(x',0)|\lesssim |t|\sup_{s\in[0,t]} |\partial_t P(x',s)| 
\lesssim \frac{|\bar x -\bar x'|}{|\bar x|^{N+1}}.
$$

\end{proof}

\vv

The following lemma shows some growth properties of our kernel $P_s$.
 \begin{lemma}\label{bounds}
 Let $s\in (0,1)$. For any $\bar x=(x,t)\in\R^{N+1}$, $x\in\R^N$ and $t\in\R$, the following holds:
 \begin{enumerate}
 \item $\displaystyle |P_s(\bar x)|\lesssim \frac1{|\bar x|_p^N}$ for all $\bar x\neq 0$. 
 \item For $\alpha\in (0,1)$, $\displaystyle |(-\Delta)^{\alpha}P_s(\bar x)|\lesssim\frac1{|\bar x|_p^{N+2\alpha}}$.
 \item $\displaystyle |\partial_t P_s(\bar x)|\lesssim\frac1{|\bar x|_p^{N+2s}}$ for all $\bar x\neq (x,0)$. 
 \item $\displaystyle |\nabla_x P_s(\bar x)|\lesssim\frac1{|\bar x|_p^{N+1}}$ for all $\bar x\neq 0$.
 \item For $s\in (\frac12,1)$, $\displaystyle |\partial_t^{1-\frac1{2s}} P_s(\bar x)|\lesssim\frac1{|x|^{N-1}|\bar x|_p^{2s}}.$ 
 \end{enumerate}
 \end{lemma}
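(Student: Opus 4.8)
The plan is to derive all five bounds from one mechanism: the Fourier definition of $P_s(\cdot,t)$ as $\FF^{-1}\big[e^{-|\xi|^{2s}t}\big]$ together with the parabolic self-similarity of $\Theta^s$. Rescaling $\xi\mapsto t^{-1/(2s)}\xi$ gives, for $t>0$,
\[
P_s(x,t)=t^{-N/(2s)}\,\Phi\!\Big(\tfrac{x}{t^{1/(2s)}}\Big),\qquad \Phi:=\FF^{-1}\big[e^{-|\xi|^{2s}}\big]=P_s(\cdot,1),
\]
while $P_s\equiv0$ for $t\le0$, so all the estimates are trivial there (the clauses $\bar x\ne(x,0)$ in (3) and (5) only discard the jump at $t=0$). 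I would then record the classical decay $|\partial^k\Phi(y)|\lesssim(1+|y|)^{-N-2s-|k|}$ for every multi-index $k$: the case $k=0$ is the two-sided Blumenthal--Getoor estimate recalled in the introduction, and in general it follows from the self-similar form above once one notes that $(2\pi i\xi)^ke^{-|\xi|^{2s}}$ is rapidly decaying at infinity while its leading non-smooth term at $\xi=0$ is $-\xi^k|\xi|^{2s}$, homogeneous of degree $|k|+2s$, whose inverse transform decays like $|y|^{-N-2s-|k|}$.

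With these in hand, parts (1)--(4) all reduce to the same two-region computation. Writing $y=x/t^{1/(2s)}$, I would split $\R^{N+1}$ into the parabolic region $\{|y|\le1\}=\{|x|\le t^{1/(2s)}\}$ and the time-like region $\{|y|>1\}=\{|x|>t^{1/(2s)}\}$: on the first, the relevant profile is bounded and the power of $|\bar x|_p=t^{1/(2s)}$ is read off at once; on the second, one uses the decay just stated together with the defining inequality $t\le|x|^{2s}$, which is exactly what converts the surplus powers of $t$ into the needed powers of $|x|=|\bar x|_p$. Concretely: (1) follows either this way or directly from Blumenthal--Getoor; for (3) one uses $\partial_tP_s(x,t)=t^{-N/(2s)-1}\Psi_0(y)$ with $\Psi_0(y)=-\tfrac N{2s}\Phi(y)-\tfrac1{2s}\,y\cdot\nabla\Phi(y)$, $|\Psi_0(y)|\lesssim(1+|y|)^{-N-2s}$; for (4), $\nabla_xP_s(x,t)=t^{-(N+1)/(2s)}(\nabla\Phi)(y)$ (and here the far-region step genuinely needs the \emph{faster} decay of $\nabla\Phi$, not merely that of $\Phi$); for (2), $(-\Delta)^\alpha P_s(x,t)=t^{-(N+2\alpha)/(2s)}\Phi_\alpha(y)$ with $\Phi_\alpha:=\FF^{-1}\big[|\xi|^{2\alpha}e^{-|\xi|^{2s}}\big]$, the only new input being that $\Phi_\alpha$ is bounded near $0$ and decays only like $|y|^{-N-2\alpha}$ at infinity — slower than $\Phi$ precisely because the symbol $|\xi|^{2\alpha}e^{-|\xi|^{2s}}$ behaves like the homogeneous function $|\xi|^{2\alpha}$ near $\xi=0$ — which reproduces exactly the exponent $N+2\alpha$ in (2).

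The part that needs real work is (5). First I would give meaning to the fractional time derivative $\partial_t^{\,\beta}$, $\beta:=1-\tfrac1{2s}\in(0,\tfrac12)$, of a kernel supported in $\{t\ge0\}$: since $P_s(x,0^+)=0$ for every $x\ne0$ (immediate from the self-similar form and the decay of $\Phi$), Caputo and Riemann--Liouville agree there, and, using $1-\beta=\tfrac1{2s}$ and $\int_0^t\partial_\tau P_s(x,\tau)\,d\tau=P_s(x,t)$,
\[
\Gamma\!\big(\tfrac1{2s}\big)\,\partial_t^{\,\beta}P_s(x,t)=\int_0^t\big((t-\tau)^{-\beta}-t^{-\beta}\big)\,\partial_\tau P_s(x,\tau)\,d\tau+t^{-\beta}P_s(x,t).
\]
The role of the subtraction is crucial: dominating $|\partial_\tau P_s(x,\tau)|$ crudely by $|(x,\tau)|_p^{-N-2s}$ (from (3)) under the integral sign is \emph{not} enough when $|x|\ll t^{1/(2s)}$, because $\partial_\tau P_s(x,\tau)$ is of size $|x|^{-N-2s}$ near $\tau=0$ and this is too large after integration, whereas $(t-\tau)^{-\beta}-t^{-\beta}$ vanishes like $\beta\,t^{-\beta-1}\tau$ as $\tau\to0$ and supplies exactly the missing power of $\tau$. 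The rest is the standard two-region bookkeeping applied to the displayed identity: split the $\tau$-integral at $\min\{t/2,|x|^{2s}\}$; on $\{\tau\le|x|^{2s}\}$ use the sharp time-like forms $|\partial_\tau P_s(x,\tau)|\lesssim|x|^{-N-2s}$ and $|P_s(x,\tau)|\lesssim\tau\,|x|^{-N-2s}$, and on $\{\tau\ge|x|^{2s}\}$ the parabolic forms $|\partial_\tau P_s(x,\tau)|\lesssim\tau^{-N/(2s)-1}$, $|P_s(x,\tau)|\lesssim\tau^{-N/(2s)}$ (all from (1) and (3)); bound $|(t-\tau)^{-\beta}-t^{-\beta}|$ by $\beta\,t^{-\beta-1}\tau$ for $\tau\le t/2$ and by $(t-\tau)^{-\beta}$ otherwise; and finally separate the cases $|x|\gtrsim t^{1/(2s)}$ and $|x|<t^{1/(2s)}$. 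Tracking exponents in each piece — in every (sub)region the geometric inequality available between $t^{1/(2s)}$ and $|x|$ turns out to be exactly the one the exponent count requires — yields precisely $|\partial_t^{\,\beta}P_s(\bar x)|\lesssim|x|^{-(N-1)}|\bar x|_p^{-2s}$.

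Two remarks on (5), which I expect to be the main obstacle. First, the bound there is deliberately lopsided — weaker, when $|x|\ll|\bar x|_p$, than the homogeneous bound $|\bar x|_p^{-(N+2s-1)}$ one might hope for — and this is unavoidable: in the scaled picture $\partial_t^{\,\beta}P_s(x,t)=t^{-N/(2s)-\beta}\Psi(y)$, and the profile $\Psi$, whose Fourier symbol decays only like $|\xi|^{-2s}$ at infinity, carries a mild singularity $|\Psi(y)|\lesssim|y|^{-(N-2s)}$ at $y=0$ (while staying $\lesssim|y|^{-N-2s}$ for $|y|\ge1$); feeding these two bounds back through the scaling identity reproduces exactly $|x|^{-(N-1)}|\bar x|_p^{-2s}$, and it is only because $s>\tfrac12$ that $|y|^{-(N-2s)}$ is dominated near $0$ by $|y|^{-(N-1)}$ — which is why (5) is restricted to $s\in(\tfrac12,1)$. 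Second, and more to the point, the difficulty in (5) is precisely that naive pointwise domination fails in the time-like-dominated region, so one must extract the cancellation built into the fractional-integration kernel (equivalently, the decay of the relevant Fourier symbol) before estimating; everything else in the lemma is the routine self-similarity-plus-two-regions argument.
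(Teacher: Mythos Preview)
For (1)--(4) your approach coincides with the paper's: self-similar profile plus two-region split, differing only in how the profile decay is justified (you argue by general Fourier principles; the paper cites a Pruitt--Taylor Bessel estimate for (2), uses the PDE identity $\partial_tP_s=-(-\Delta)^sP_s$ to reduce (3) to (2), and for (4) extracts $\nabla\Phi$ from the profile ODE $2s(-\Delta)^s\Phi-N\Phi-y\cdot\nabla\Phi=0$ rather than from Fourier).

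For (5) the routes genuinely differ. The paper rescales in \emph{space} rather than time: writing $P_s(x,t)=|x|^{-N}F_s(t/|x|^{2s})$ gives $\partial_t^{\beta}P_s(x,t)=|x|^{-(N+2s-1)}(\partial_t^{\beta}F_s)(t/|x|^{2s})$, so everything collapses to the one-variable bound $|\partial_t^{\beta}F_s(u)|\lesssim\min(1,|u|^{-1})$, which the paper proves by splitting the defining singular integral over the three ranges $|r|\le|u|/2$, $|u|/2<|r|\le2|u|$, $|r|>2|u|$. You instead keep both variables and exploit the subtracted kernel $(t-\tau)^{-\beta}-t^{-\beta}$ directly, splitting in $\tau$ at $\min\{t/2,|x|^{2s}\}$. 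The paper's reduction is tidier (one variable, no $x$ bookkeeping); yours makes the cancellation mechanism more explicit and avoids introducing the auxiliary $F_s$.

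One caveat you should address: the paper's $\partial_t^\alpha$ is the \emph{two-sided} singular integral $\int_{\R}(f(x,r)-f(x,t))\,|r-t|^{-1-\alpha}\,dr$ (this is the definition used in Section~7 and visibly in the paper's proof of (5)), not the one-sided Riemann--Liouville/Caputo operator you set up. For $P_s$ supported in $\{t>0\}$ the extra contribution from $r<0$ is $-P_s(x,t)\int_{-\infty}^0|r-t|^{-1-\beta}\,dr\approx t^{-\beta}P_s(x,t)$, of the same order as your boundary term, so your estimates survive with trivial modification --- but the displayed identity in your proposal is not literally the object the lemma is about. Your Fourier-profile remark (symbol $c|\tau|^\beta$, profile $\Psi$ with a $|y|^{-(N-2s)}$ singularity at the origin and $|y|^{-N-2s}$ decay at infinity) \emph{does} apply verbatim to the two-sided operator and would be the cleanest way to align your argument with the paper's definition.
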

 \begin{proof}
 The first property follows from the definition of the kernel. For $t\leq 0$, $P_s(x,t)=0$ and for $t>0$,
 
 $$|P_s(\bar x)|=|P_s(x,t)|\approx\frac{t}{(t^{\frac1{s}}+|x|^2)^{\frac{N+2s}2}}\leq\frac{(t^\frac1s+|x|^2)^s}{(t^{\frac1{s}}+|x|^2)^{\frac{N+2s}2}}=\frac1{|\bar x|_p^N}.$$
 
 We prove now the second one. Applying Fourier transform to the fractional heat equation with respect to the space variable $x$, and calling the new variable $\xi$, we get the equation $\displaystyle \partial_t\widehat u=-|\xi|^{2s}\widehat u$, that allows to solve the initial-value
problem in Fourier space by means of the formula $\widehat u(\xi,t)=\widehat u_0(\xi)e^{-|\xi|^{2s}t}$.
Applying the inverse transform, the fractional heat equation can be solved for all $0<s<1$
by means of the fundamental solution, $P_s(x,t)$, which is the inverse transform of the function
$e^{-|\xi|^{2s}t}$. Hence the kernel $P_s$ has Fourier transform $\displaystyle\widehat P_s(\xi,t)=e^{-|\xi|^{2s}t}$. It is well known that for $t>0$ (recall that for $t\leq 0,\; P_s(x,t)=0$) it has the self-similar form \
 
 \begin{equation}\label{selfsimilarphi}
 P_s(x,t)=t^{\frac{-N}{2s}}\phi\left(|x|t^{-\frac1{2s}}\right)
 \end{equation}
 
  for some positive $\mathcal C^{\infty}$ function $\phi$, radially decreasing and satisfying $\phi(u)\approx (1+u^2)^{-\frac{N+2s}{2}}$ (see \cite{blumenthalgetoor} and \cite{vazquez}). 

  Using \eqref{selfsimilarphi} we deduce that 
  
  $$(-\Delta)^{\alpha}P_s(x,t)=t^{-\frac{N}{2s}-\frac{\alpha}{s}}\psi\left(|x|t^{-\frac1{2s}}\right)$$ 
 
  where $\displaystyle \psi(z)=(-\Delta)^{\alpha}\phi(z)$. Since $\widehat\phi(\xi)=e^{-|\xi|^{2s}}$, then $\displaystyle \widehat\psi(\xi)=|\xi|^{2\alpha}e^{-|\xi|^{2s}}$ and using the expression of the inverse Fourier transform of a radial function (see \cite[Section B.5]{grafakos} or \cite[Section IV.I]{steinweiss} for a proof) we obtain 
  
  $$\psi(|z|)=c_{N}|z|^{1-\frac{N}2}\int_0^\infty e^{-r^{2s}}r^{\frac{N}2+2\alpha}J_{\frac{N}2-1}(r|z|)dr,$$
 where $J_k$ is the classical Bessel function of order $k$.
 Thus \cite[Lemma 1]{pruitttaylor} gives us the decay $\displaystyle \left|\psi(|z|)\right|=O(|z|^{-N-2\alpha})$, for $|z|$ large. Since $\psi$ is bounded we have 
 
 \begin{equation}\label{boundpsi}
 \left|\psi(|z|)\right|\lesssim (1+|z|^2)^{\frac{-N-2\alpha}2},
 \end{equation} 
 
 which implies 
 $$\left|(-\Delta)^{\alpha}P_s(x,t)\right|\lesssim\frac1{t^{\frac{N+2\alpha}{2s}}(1+|x|^2t^{-\frac1{s}})^{\frac{N+2\alpha}{2}}}=\frac1{|\bar x|_p^{N+2\alpha}},$$
 the second estimate in the statement of the lemma.
 Observe that for $\alpha=s$ this is
 \begin{equation}\label{bounddeltaphi}
  |(-\Delta)^{s}P_s(\bar x)|\lesssim |\bar x|_p^{-N-2s}. 
 \end{equation}
 Therefore using that $P_s$ is the fundamental solution of the $s$-fractional heat equation
   \begin{equation}
  \label{equationphi}
  \partial_tP_s(x,t)=-(-\Delta)^{s}P_s(x,t)=-t^{-\frac{N}{2s}-1}(-\Delta)^{s}\phi(z),\quad z=|x|t^{-\frac1{2s}},\quad t>0, 
  \end{equation}
 and  the fact that \begin{equation}\label{scas}|(-\Delta)^{s}\phi(|z|)|\lesssim (1+|z|^2)^{\frac{-N-2s}2}\end{equation} (which is estimate \eqref{boundpsi} with $\alpha=s$) we get 
  $$|\partial_tP_s(\bar x)|\lesssim\frac1{t^{1+\frac{N}{2s}}(1+|x|^2t^{-\frac1s})^{\frac{N+2s}2}}=\frac1{|\bar x|_p^{N+2s}},$$
  which is the third estimate in the statement of the lemma.
  
   Next we will estimate the spatial derivative $\nabla_xP_s(x,t)$. Clearly
  $$\nabla_xP_s(x,t)=\nabla_x\left(t^{\frac{-N}{2s}}\phi(|x|t^{-\frac1{2s}})\right)=t^{\frac{-N-1}{2s}}\phi'(|x|t^{-\frac1{2s}}).$$
   If we can show that 
   \begin{equation}\label{fi}\phi'(u)\lesssim (1+u^2)^{-\frac{N+1}2},
   \end{equation} then we are done because 
  $$|\nabla_xP_s(x,t)|\lesssim\frac{t^{\frac{-N-1}{2s}}}{(1+|x|^2t^{-1/s})^{\frac{N+1}2}}=\frac1{|\bar x|_p^{N+1}}.$$
  In order to estimate $\nabla\phi(z)$, we consider the equation for $\phi$ that comes from \eqref{equationphi}, that is
  $$2s(-\Delta)^{s}\phi(z)-N\phi(z)-z\cdot\nabla\phi(z)=0.$$
  Notice that it implies that $$|\nabla\phi(z)|\lesssim\frac{|\phi(z)|+|(-\Delta)^s\phi(z)|}{|z|}.$$
  Since $\phi'(|z|)$ is bounded, from  $\phi(u)\approx (1+u^2)^{-\frac{N+2s}{2}}$ and \eqref{scas} we deduce \eqref{fi}.   
  
 To prove the last estimate in the statement of the lemma, that is
 \begin{equation}\label{partialfractional}
 |\partial_t^{1-\frac1{2s}} P_s(\bar x)|\lesssim\frac1{|x|^{N-1}|\bar x|^{2s}},\quad s\in(1/2,1),
 \end{equation}
   we claim that for $t>0$,
 \begin{equation}\label{expression}
 P_s(x,t)=\frac1{|x|^N}F_s\left(\frac{t}{|x|^{2s}}\right)
 \end{equation} 
 for some function $\displaystyle F_s(u)\approx\frac{u}{(1+u^{1/s})^{\frac{N+2s}2}}$. \eqref{expression} can be proved using \eqref{selfsimilarphi}, that is $P_s(x,t)=t^{\frac{-N}{2s}}\phi\left(|x|/t^{\frac1{2s}}\right)$ for some $\phi$ with $\phi(u)\approx (1+u^2)^{-\frac{N+2s}{2}}$. In fact,
 $$P_s(x,t)=\frac1{|x|^N}\left(\frac{|x|}{t^{\frac1{2s}}}\right)^N\phi(\left(\frac{|x|}{t^{\frac1{2s}}}\right)=\frac1{|x|^N}\left(\frac{t}{|x|^{2s}}\right)^{-\frac{N}{2s}}\phi\left(\Big(\frac{t}{|x|^{2s}}\Big)^{-\frac1{2s}}\right)=\frac1{|x|^N}F_s\left(\frac{t}{|x|^{2s}}\right)$$
 the last equality being a definition for $F_s$. Hence
 $$F_s(u)=u^{\frac{-N}{2s}}\phi(u^{\frac{-1}{2s}})\approx\frac{ u^{\frac{-N}{2s}}}{(1+u^{-1/s})^{\frac{N+2s}{2}}}=\frac{u}{u^{\frac{N+2s}{2s}}(1+u^{-1/s})^{\frac{N+2s}{2}}}=\frac{u}{(1+u^{1/s})^{\frac{N+2s}2}}$$ and claim \eqref{expression} is proved.
 Notice that 
 we have
 $$\partial_t^{1-\frac1{2s}}P_s(x,t)=\frac1{|x|^N}\left[\partial_t^{1-\frac1{2s}}F_s\left(\frac{\cdot}{|x|^{2s}}\right)\right](t)=\frac1{|x|^{N+2s-1}}\partial_t^{1-\frac1{2s}}F_s\left(\frac{t}{|x|^{2s}}\right).$$
 Hence, to show \eqref{partialfractional} it is enough to see that for all $t\in\R$, \begin{equation}\label{bound1}|\partial_t^{1-\frac1{2s}}F_s(t)|\lesssim \min (1,|t|^{-1}).\end{equation}
 Once \eqref{bound1} is available we obtain \eqref{partialfractional} easily: $$|\partial_t^{1-\frac1{2s}} P_s(x,t)|\lesssim\frac1{|x|^{N+2s-1}}\min\left(1,\frac{|x|^{2s}}{t}\right)=\frac1{\max(|x|^{2s},t)|x|^{N-1}}=\frac1{|x|^{N-1}|\bar x|_p^{2s}}.$$ To show \eqref{bound1}, we write
 \begin{align*}
 |\partial_t^{1-\frac1{2s}}F_s(t)|&\leq \int\frac{|F_s(r)-F_s(t)|}{|r-t|^{2-\frac1{2s}}}dr=\int_{|r|\leq|t|/2}\frac{|F_s(r)-F_s(t)|}{|r-t|^{2-\frac1{2s}}}dr\\&\quad\quad\quad+\int_{|t|/2<|r|\leq 2|t|}\frac{|F_s(r)-F_s(t)|}{|r-t|^{2-\frac1{2s}}}dr+\int_{|r|>2|t|}\frac{|F_s(r)-F_s(t)|}{|r-t|^{2-\frac1{2s}}}dr\\&=I_1+I_2+I_3.
 \end{align*}
 We begin with $I_1$. Notice that here $|r-t|\approx|t|$. Then,
$$I_1\lesssim\frac1{|t|^{2-\frac1{2s}}}\left(\int_{|r|\leq|t|/2}|F_s(r)|dr+\int_{|r|\leq|t|/2}|F_s(t)|dr\right)=I_{11}+I_{12}.$$
If $|t|\leq 1$, then 
\begin{align*}
I_{11}&\lesssim\frac1{|t|^{2-\frac1{2s}}}\int_0^{|t|}\frac{r}{(1+r^{\frac1s})^{\frac{N+2s}2}}dr\lesssim\frac{|t|}{|t|^{2-\frac1{2s}}}\int_0^{|t|}\frac{dr}{(1+r^{\frac1s})^{\frac{N+2s}2}}\\&\lesssim |t|^{\frac1{2s}}\leq 1.
\end{align*}
If $|t|>1$, then we can write
\begin{align*}
I_{11}&\lesssim\frac1{|t|^{2-\frac1{2s}}}\int_0^{1}\frac{r}{(1+r^{\frac1s})^{\frac{N+2s}2}}dr+\frac1{|t|^{2-\frac1{2s}}}\int_1^{|t|}\frac{r}{(1+r^{\frac1s})^{\frac{N+2s}2}}dr.
\end{align*}
Since $$\frac1{|t|^{2-\frac1{2s}}}\int_1^{|t|}\frac{r}{(1+r^{\frac1s})^{\frac{N+2s}2}}dr\leq\frac1{|t|^{2-\frac1{2s}}}\int_1^{|t|}r^{\frac{-N}{2s}}\approx \frac{1-|t|^{1-\frac{N}{2s}}}{|t|^{2-\frac1{2s}}}\lesssim \frac{|t|^{1-\frac1{2s}}}{|t|^{2-\frac1{2s}}}=\frac1{|t|},$$
then if $|t|>1$, $$I_{11}\lesssim \frac1{|t|^{2-\frac1{2s}}}+\frac1{|t|}\lesssim\frac1{|t|}.$$
Hence $\displaystyle I_{11}\lesssim \min\left(1,\frac1{|t|}\right).$ Since $F_s(t)=0$ for $t\leq 0$, to estimate $I_{12}$ we only need to consider positive $t$, therefore
\begin{align*}
I_{12}&=\frac{|F_s(t)|}{|t|^{1-\frac1{2s}}}\approx\frac{t^2}{t^{2-\frac1{2s}}(1+t^{\frac1s})^{\frac{N+2s}2}}=\frac{t^{\frac1{2s}}}{(1+t^{\frac1s})^{\frac{N+2s}2}}\lesssim\min\left(1,\frac1{t}\right).
\end{align*}
This finishes the estimate of $I_1$.  To deal with $I_2$, we distinguish two cases, according to whether $r$ has the same sign as $t$ or not. In the first case we write $r\in Y$, and in the second one, $r\in N$. In the case $r\in N$, with 
$|t|/2\leq|r|\leq 2|t|$, it turns out that $|r-t|\approx |t|$, and thus
\begin{align*}
I_{2,N} & :=\int_{r\in N,|t|/2\leq|r|\leq 2|t|}  \frac{|F_s(r) - F_s(t)|}{|r-t|^{2-\frac1{2s}}}\,dr\lesssim \frac1{|t|^{2-\frac1{2s}}}\int_0^{2|t|}\frac{r}{(1+r^{\frac1s})^{\frac{N+2s}2}}\,dr
+ \frac{|F_s(t)|}{|t|^{1-\frac1{2s}}}.
\end{align*}
Observe that this last expression is very similar to the ones in $I_{11}$ and $I_{12}$.
Then, by almost the same arguments we deduce that
$$I_{2,N}\lesssim \min\bigg(1,\frac1{|t|}\bigg).$$
To deal with the case when the sign of $s$ is the same as the one of $t$ (i.e., $s\in Y$),
we take into account that
$$|F_s(r)-F_s(t)|\leq \sup_{\xi\in[r,t]}|F_s'(\xi)|\,|r-t|.$$
Notice that \eqref{expression} tells us that if $|y|=1$, then $F_s(\xi)=P_s(y,\xi)$. Hence due to property (3) of this lemma 
and since in this case $|t|/2\leq|\xi|\leq 2|t|$, it is immediate to check that for this $\xi$ we  have
$$|F_s'(\xi)|\lesssim \frac1{(1+|t|^{\frac1s})^{\frac{N+2s}2}}.$$
Thus,
\begin{align*}
I_{2,Y} & :=\int_{r\in Y,|t|/2\leq|r|\leq 2|t|}  \frac{|F_s(r) - F_s(t)|}{|r-t|^{2-\frac1{2s}}}dr\\
&\lesssim \frac1{(1+|t|^{\frac1s})^{\frac{N+2s}2}}\int_{|r|\leq 2|t|} \frac{dr}{|r-t|^{1-\frac1{2s}}}\\&\lesssim
\frac{|t|^{\frac1{2s}}}{(1+|t|^{\frac1s})^{\frac{N+2s}2}}\lesssim \min\bigg(1,\frac1{|t|}\bigg).
\end{align*}
 
Finally we will deal with $I_3$. 
 \begin{align*}
 I_3\leq \int_{|r|>2|t|}\frac{|F_s(r)|}{|r-t|^{2-\frac1{2s}}}dr+\int_{|r|>2|t|}\frac{|F_s(t)|}{|r-t|^{2-\frac1{2s}}}dr=I_{31}+I_{32}.
 \end{align*}
 Notice that $|r-t|\approx|r|>2|t|$. If $|t|\leq 1$, then 
  \begin{align*}
  I_{31}\lesssim\int_{|t|}^1\frac{dr}{r^{1-\frac1{2s}}(1+r^{\frac1s})^{\frac{N+2s}2}}+\int_{1}^\infty\frac{dr}{r^{1-\frac1{2s}}(1+r^{\frac1s})^{\frac{N+2s}2}}.  \end{align*}
  Since $$\int_{|t|}^1\frac{dr}{r^{1-\frac1{2s}}(1+r^{\frac1s})^{\frac{N+2s}2}}\leq\int_{|t|}^1\frac{dr}{r^{1-\frac1{2s}}}\approx 1-|t|^{\frac1{2s}}\leq1$$ and $$\int_{1}^\infty\frac{dr}{r^{1-\frac1{2s}}(1+r^{\frac1s})^{\frac{N+2s}2}}\lesssim 1,$$ we have $\displaystyle I_{31}\lesssim1$ when $|t|\leq 1$.
 If $|t|>1$, then
  \begin{align*}
  I_{31}\lesssim\int_{|t|}^\infty\frac{dr}{r^{1-\frac1{2s}}(1+r^{\frac1s})^{\frac{N+2s}2}}\leq\int_{|t|}^\infty\frac{dr}{r^{2+\frac{N-1}{2s}}}\approx\frac1{|t|^{1+\frac{N-1}{2s}}}\lesssim\frac1{|t|}.
  \end{align*}
  Hence $\displaystyle I_{31}\lesssim \min\left(1,\frac1{|t|}\right)$. Finally
 \begin{align*}
 I_{32}\lesssim\frac{t}{(1+t^{\frac1s})^\frac{N+2s}2}\int_{t}^\infty\frac{dr}{r^{2-\frac1{2s}}}\lesssim\frac{t^{\frac1{2s}}}{(1+t^{\frac1s})^{\frac{N+2s}2}}\leq\min\left(1,\frac1{t}\right).
  \end{align*}
\end{proof}

\section{Localization estimates for $s=1/2$}\label{section-localization}

The main objective of this section is to show the following localization result.

\begin{theorem}\label{teoloc}
Let 
 $\nu$ be a distribution in $\R^{N+1}$ with 
$\|P*\nu\|_\infty \leq 1.$
Let $\vphi$ be a ${\mathcal C}^1$ function supported on a cube $Q\subset\R^{N+1}$ such that $\|\nabla\vphi\|_\infty\leq\ell(Q)^{-1}$. Then $\|P*(\vphi\nu)\|_\infty \lesssim 1.$
\end{theorem}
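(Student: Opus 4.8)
The plan is to reduce everything to a Vitushkin-type identity. Write $f:=P*\nu$, so that $\|f\|_\infty\le1$ and $\Theta^{1/2}f=\nu$ (suitably interpreted). Since $\Theta^{1/2}=(-\Delta)^{1/2}+\partial_t$ and the commutator
\[
[(-\Delta)^{1/2},\vphi]\,g(x,t)=c_N\,\pv\!\!\int_{\R^N}\frac{(\vphi(x,t)-\vphi(z,t))\,g(z,t)}{|x-z|^{N+1}}\,dz
\]
(the fractional Laplacian acting on the space variable only) satisfies $(-\Delta)^{1/2}(\vphi g)=\vphi\,(-\Delta)^{1/2}g+[(-\Delta)^{1/2},\vphi]g$, the Leibniz rule gives, in ${\mathcal D}'(\R^{N+1})$,
\[
\Theta^{1/2}(\vphi f)=\vphi\,\Theta^{1/2}f+[(-\Delta)^{1/2},\vphi]f+(\partial_t\vphi)\,f=\vphi\nu+[(-\Delta)^{1/2},\vphi]f+(\partial_t\vphi)\,f .
\]
Applying $P*\,\cdot$ and using that $P$ is the fundamental solution (and that $\vphi f\in L^\infty$ has compact support, so $P*\Theta^{1/2}(\vphi f)=\vphi f$) yields
\[
P*(\vphi\nu)=\vphi f-P*\big((\partial_t\vphi)f\big)-P*\big([(-\Delta)^{1/2},\vphi]f\big).
\]
A first, mostly bookkeeping, step is to make this identity rigorous by pairing against test functions, keeping track of $\nu\in{\mathcal D}'$ and $f\in L^\infty$.

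The two terms other than $P*([(-\Delta)^{1/2},\vphi]f)$ are handled directly with Lemma \ref{bound}. Since $\vphi$ vanishes off $Q$ and $\|\nabla\vphi\|_\infty\le\ell(Q)^{-1}$, we get $\|\vphi\|_\infty\lesssim 1$, hence $\|\vphi f\|_\infty\lesssim 1$. Next, $(\partial_t\vphi)f$ is supported on $Q$ with $\|(\partial_t\vphi)f\|_\infty\le\ell(Q)^{-1}$, so by Lemma \ref{bound}(a) and the elementary bound $\int_Q|\bar x-\bar y|^{-N}\,d\bar y\lesssim\ell(Q)$, valid for every $\bar x$,
\[
\big|P*\big((\partial_t\vphi)f\big)(\bar x)\big|\le\ell(Q)^{-1}\!\int_Q|P(\bar x-\bar y)|\,d\bar y\lesssim 1 .
\]

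The core of the argument is $P*([(-\Delta)^{1/2},\vphi]f)$; this is also where the mere ${\mathcal C}^1$ hypothesis on $\vphi$ is used in an essential way. The key move is to transfer the commutator from $f$ onto the kernel $P$: using Fubini and the symmetry of $|y-z|^{-N-1}$ (equivalently, the self-adjointness of $(-\Delta)^{1/2}$), one gets, with $\bar y=(y,u)$,
\[
P*\big([(-\Delta)^{1/2},\vphi]f\big)(\bar x)=-\int_{\R^{N+1}} f(\bar y)\,\big[(-\Delta)^{1/2}_z,\vphi(\cdot,u)\big]\big(P(x-\cdot,\,t-u)\big)(y)\,d\bar y ,
\]
so that $|P*([(-\Delta)^{1/2},\vphi]f)(\bar x)|\le\|f\|_\infty\,I(\bar x)$ with
\[
I(\bar x):=\int_{\R^{N+1}}\Big|\big[(-\Delta)^{1/2}_z,\vphi(\cdot,u)\big]\big(P(x-\cdot,\,t-u)\big)(y)\Big|\,dy\,du .
\]
Now $\vphi$ occurs only through the first‑order differences $\vphi(y,u)-\vphi(z,u)$ — controlled by $\|\nabla\vphi\|_\infty\le\ell(Q)^{-1}$ — while the function hit by $(-\Delta)^{1/2}$ is $P(x-\cdot,\tau)$, $\tau:=t-u$, which is smooth away from $0$, obeys the estimates of Lemma \ref{bound}, vanishes for $\tau\le0$, and has $\|P(x-\cdot,\tau)\|_{L^1(\R^N)}\approx1$ for $\tau>0$. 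Hence it suffices to prove $I(\bar x)\lesssim1$ uniformly in $\bar x$.

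To estimate $I(\bar x)$ I would split the integral by the size of $\tau=t-u>0$ and by the position of $\bar y$ relative to $Q$ and to $\bar x$. For each fixed $u$ one bounds $\int_{\R^N}|[(-\Delta)^{1/2},\vphi(\cdot,u)](P(x-\cdot,\tau))(y)|\,dy$ by exploiting the principal‑value cancellation: inside the commutator one replaces $P(x-w,\tau)$ by $P(x-w,\tau)-P(x-y,\tau)$ (the subtracted term contributing zero), so the integrand carries the factor $|P(x-w,\tau)-P(x-y,\tau)|$, estimated through Lemma \ref{bound}(c); the remaining pieces are absorbed using the decay in Lemma \ref{bound} and $\|P(x-\cdot,\tau)\|_{L^1}\approx1$. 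This yields a spatial bound that blows up at worst like $\ell(Q)^{-1}\log\!\big(\ell(Q)/\tau\big)$ as $\tau\to0^+$, and since $\int_0^{C\ell(Q)}\log(\ell(Q)/\tau)\,d\tau\lesssim\ell(Q)$, integration in $u$ gives $I(\bar x)\lesssim1$. The hardest region, and the main obstacle, is the near‑diagonal one: $\bar x$ in or near $Q$ with $\bar y\to\bar x$. There $P(x-\cdot,\tau)$ concentrates near $y=x\in Q$ as $\tau\to0$ while keeping mass $\approx1$, so the commutator is genuinely unbounded; one must resist splitting it into $P(x-\cdot,\tau)\,(-\Delta)^{1/2}\vphi(\cdot,u)$ plus a remainder — that would need Dini (not just ${\mathcal C}^1$) regularity of $\nabla\vphi$, and each piece is separately non‑integrable — and instead keep the internal cancellation, trading the logarithmic blow‑up in the spatial variable against the integration in time.
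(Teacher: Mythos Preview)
Your Vitushkin--Leibniz decomposition $P*(\vphi\nu)=\vphi f-P*((\partial_t\vphi)f)-P*([(-\Delta)^{1/2},\vphi]f)$ is a natural route, and it is genuinely different from the paper's. The paper never writes this identity; instead it works entirely through the growth condition of Lemma~\ref{lemgrow}: for $\bar x\in\frac32 Q$ it compares $(P*\vphi\nu)(\bar x)$ with $\vphi(\bar x)(P*\nu)(\bar x)$, introduces an auxiliary cutoff $\psi$, and shows that each resulting pairing $\langle\nu,\cdot\rangle$ involves an admissible test function (so Lemma~\ref{lemgrow} applies), using Lemma~\ref{punt} to anchor the far-field and a dyadic annulus decomposition to sum it. For $\bar x\notin\frac32 Q$ the admissibility of $\ell(Q)^N\vphi P^\ve_{\bar x}$ is direct. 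The advantage of the paper's route is that it only ever needs $L^1$-type bounds on $(-\Delta)^{1/2}$ of test functions (condition~\eqref{admissiblephi}), which are stable under mere $\mathcal C^1$ regularity; it never needs pointwise control of $(-\Delta)^{1/2}\vphi$.

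Your handling of the commutator term has a real gap. You write that inside the commutator one ``replaces $P(x-w,\tau)$ by $P(x-w,\tau)-P(x-y,\tau)$ (the subtracted term contributing zero)''. But the subtracted term is exactly
\[
c_N\,\pv\!\int\frac{(\vphi(y,u)-\vphi(z,u))\,P(x-y,\tau)}{|y-z|^{N+1}}\,dz
\;=\;P(x-y,\tau)\,(-\Delta)^{1/2}\vphi(y,u),
\]
which is \emph{not} zero --- it is precisely the piece you later say one ``must resist'' isolating because for merely $\mathcal C^1$ $\vphi$ the function $(-\Delta)^{1/2}\vphi$ need not be bounded. So the sketch is internally inconsistent: the split you perform is the split you warn against, and without it the claimed spatial bound $\lesssim\ell(Q)^{-1}\log(\ell(Q)/\tau)$ is unsupported. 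Note also that $[(-\Delta)^{1/2},\vphi]$ is a Calder\'on-type commutator which does not map $L^1\to L^1$ (its adjoint applied to $1$ equals $-(-\Delta)^{1/2}\vphi\neq 0$), so there is no soft reason for $\|[(-\Delta)^{1/2},\vphi(\cdot,u)]P(x-\cdot,\tau)\|_{L^1_y}$ to be uniformly controlled; a genuine hard estimate exploiting the specific structure of $P$ would be required, and you have not supplied one. The approach may well be salvageable, but as written the commutator bound is a genuine obstacle, not a detail.
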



In the statement the operator $\nabla$ refers to $\nabla=(\nabla_x,\partial_t)$. Before proving Theorem \ref{teoloc} we need several lemmas and definitions.  
We say that a ${\mathcal C}^1$ function $\vphi$ is admissible for $Q\subset\R^{N+1}$ if it is supported on $Q\subset\R^{N+1}$ and satisfies 
\begin{equation}\label{admissiblephi}
\int_{Q}|\partial_t\vphi(x,t)|dxdt\lesssim \ell(Q)^N\quad\mbox{ and }\int_{\R^{N+1}}|(-\Delta_x)^{1/2}\vphi(x,t)|dxdt\lesssim\ell(Q)^N.
\end{equation}
Recall that $$(-\Delta_x)^{1/2}\vphi\approx\sum_{j=1}^NR_j\partial_j\vphi,$$ with $R_j$, $1\leq j\leq N$, being the Riesz transforms with Fourier multiplier $\xi_j/|\xi|$. Then setting $Q=Q_1\times I_Q$ with $Q_1\subset\R^N$ and $I_Q\subset\R$, one can write
\begin{align*}
\int_{\R^{N+1}}|(-\Delta_x)^{1/2}\vphi(x,t)|dxdt&\approx\int_{I_Q}\int_{\R^N}|\sum_{j=1}^NR_j\partial_j\vphi(x,t)|dxdt\\&\leq\sum_{j=1}^N\int_{I_Q}\int_{\R^N}|R_j\partial_j\vphi(x,t)|dxdt.\end{align*}
Therefore if $\vphi$ satisfies $$\int_{Q}|\partial_t\vphi(x,t)|dxdt\lesssim \ell(Q)^N\quad\mbox{ and }\quad\int_{I_Q}\|R_j\partial_j\vphi(\cdot,t)\|_{L^1(\R^N)}dt\lesssim\ell(Q)^N,\;\mbox{ for }1\leq j\leq N,$$ then $\vphi$ is admissible for $Q$.

\begin{lemma}\label{admissibility}
Let $\vphi$ be a ${\mathcal C}^1$ function supported on a cube $Q=Q_1\times I_Q\subset\R^{N+1}$, with $Q_1\subset\R^N$ and $I_Q\subset\R$, and such that 
\begin{enumerate}
\item $\displaystyle\|\partial_t\vphi\|_{L^1(Q)}\lesssim\ell(Q)^N$.
\item $\displaystyle\int_{I_Q}\|R_j\partial_j\vphi(\cdot,t)\|_{L^1(2Q_1)}dt\lesssim\ell(Q)^N$, for $j=1,2,\cdots,N$.
\end{enumerate}
Then $\vphi$ is admissible for $Q$.
\end{lemma}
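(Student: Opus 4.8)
The plan is to reduce everything to the second inequality in \eqref{admissiblephi}, since the first one is precisely hypothesis~(1). Since $(-\Delta_x)^{1/2}$ acts only in the $x$ variable and $\vphi(\cdot,t)\equiv 0$ for $t\notin I_Q$, the function $(-\Delta_x)^{1/2}\vphi(x,t)$ vanishes for $t\notin I_Q$, and I would split
\[
\int_{\R^{N+1}}|(-\Delta_x)^{1/2}\vphi|\,dx\,dt=\int_{I_Q}\int_{2Q_1}|(-\Delta_x)^{1/2}\vphi|\,dx\,dt+\int_{I_Q}\int_{\R^N\setminus 2Q_1}|(-\Delta_x)^{1/2}\vphi|\,dx\,dt=:A+B.
\]
The local term $A$ is handled at once using $(-\Delta_x)^{1/2}\vphi\approx\sum_{j=1}^N R_j\partial_j\vphi$ together with hypothesis~(2), giving $A\lesssim\sum_{j=1}^N\int_{I_Q}\|R_j\partial_j\vphi(\cdot,t)\|_{L^1(2Q_1)}\,dt\lesssim\ell(Q)^N$, the dimension $N$ being fixed.

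The heart of the matter is the tail term $B$. First I would observe that for fixed $t$ the function $\vphi(\cdot,t)$ is supported in $Q_1$, so for $x\notin 2Q_1$ there is no singularity in the integral formula for the fractional Laplacian and
\[
(-\Delta_x)^{1/2}\vphi(x,t)=c_N\int_{\R^N}\frac{\vphi(x,t)-\vphi(y,t)}{|x-y|^{N+1}}\,dy=-c_N\int_{Q_1}\frac{\vphi(y,t)}{|x-y|^{N+1}}\,dy.
\]
Since $|x-y|\gtrsim\ell(Q)$ whenever $y\in Q_1$ and $x\notin 2Q_1$, one has $\int_{\R^N\setminus 2Q_1}|x-y|^{-(N+1)}\,dx\lesssim\ell(Q)^{-1}$ uniformly in $y\in Q_1$, and Fubini yields $\int_{\R^N\setminus 2Q_1}|(-\Delta_x)^{1/2}\vphi(x,t)|\,dx\lesssim\ell(Q)^{-1}\int_{Q_1}|\vphi(y,t)|\,dy$, hence $B\lesssim\ell(Q)^{-1}\|\vphi\|_{L^1(Q)}$.

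It remains to control $\|\vphi\|_{L^1(Q)}$, and this is where hypothesis~(1) re-enters. Since $\vphi$ is supported on $Q_1\times I_Q$, for $(x,t)\in Q$ we have $\vphi(x,t)=\int_{-\infty}^{t}\partial_\tau\vphi(x,\tau)\,d\tau$, so $|\vphi(x,t)|\le\int_{I_Q}|\partial_\tau\vphi(x,\tau)|\,d\tau$; integrating over $Q$ and recalling that $|I_Q|=\ell(Q)^{2s}=\ell(Q)$ in the present case $s=1/2$,
\[
\|\vphi\|_{L^1(Q)}\le|I_Q|\,\|\partial_t\vphi\|_{L^1(Q)}\lesssim\ell(Q)\cdot\ell(Q)^N,
\]
so $B\lesssim\ell(Q)^N$, and together with the bound on $A$ this gives the second inequality in \eqref{admissiblephi}.

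The only genuinely non-trivial point is the estimate for $B$: one must exploit that outside $2Q_1$ the operator $(-\Delta_x)^{1/2}$ becomes an honest, non-singular convolution against $\vphi(\cdot,t)$ with kernel decaying like $|x-y|^{-(N+1)}$, so that the difference between the $L^1(2Q_1)$ bound in~(2) and a global $L^1(\R^N)$ bound is recovered from the $\partial_t$-control in~(1) at the cost of one factor $\ell(Q)$, which is exactly cancelled by the gain $\ell(Q)^{-1}$ coming from the decay of the kernel. Everything else is routine.
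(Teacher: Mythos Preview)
Your proof is correct and follows essentially the same outline as the paper's: both arguments split the $x$-integral into $2Q_1$ and its complement, handle the local piece by hypothesis~(2), and for the tail exploit that outside $2Q_1$ the fractional Laplacian is a non-singular convolution of $\vphi(\cdot,t)$ against a kernel decaying like $|x-y|^{-(N+1)}$.

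There is one genuine difference worth recording. In the final step of the tail estimate the paper bounds
\[
\int_{I_Q}\int_{\R^N\setminus 2Q_1}\int_{Q_1}\frac{|\vphi(z,t)|}{|z-y|^{N+1}}\,dz\,dy\,dt
\lesssim \|\vphi\|_\infty\,\ell(I_Q)\,\ell(Q_1)^{N-1},
\]
i.e.\ it invokes an $L^\infty$ bound on $\vphi$ that is not among the stated hypotheses (though it holds in every application in the paper). Your argument instead controls the tail by $\ell(Q)^{-1}\|\vphi\|_{L^1(Q)}$ and then recovers $\|\vphi\|_{L^1(Q)}\le |I_Q|\,\|\partial_t\vphi\|_{L^1(Q)}\lesssim \ell(Q)^{N+1}$ from hypothesis~(1) via the fundamental theorem of calculus in $t$. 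This is a small but real improvement: your proof uses only the hypotheses actually listed in the lemma, at no extra cost.
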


\begin{proof}
Due to the above explanation, we only have to show that $$\int_{I_Q}\|R_j\partial_j\vphi(\cdot,t)\|_{L^1(\R^N\setminus2Q_1)}dt\lesssim\ell(Q)^N.$$
In fact,
\begin{align*}
\int_{I_Q}\|R_j\partial_j\vphi(\cdot,t)\|_{L^1(\R^N\setminus2Q_1)}dt&\approx\int_{I_Q}\int_{\R^N\setminus2Q_1}\left|\int_{Q_1}\partial_j\vphi(z,t)\frac{z_j-y_j}{|z-y|^{N+1}}dz\right|dydt\\&\lesssim\int_{I_Q}\int_{\R^N\setminus2Q_1}\int_{Q_1}\frac{|\vphi(z,t)|}{|z-y|^{N+1}}dzdydt\\&\lesssim\|\vphi\|_\infty\ell(I_Q)\ell(Q_1)^{N-1}\lesssim\ell(Q)^N,
\end{align*}
where the forelast inequality is obtained by integrating on annulus, for example. 
\end{proof}

\begin{coro}\label{standardtest}
A ${\mathcal C}^1$ function $\vphi$ supported on a cube $Q\subset\R^{N+1}$ with $\|\nabla\vphi\|_\infty\leq\ell(Q)^{-1}$ is admissible for $Q$.
\end{coro}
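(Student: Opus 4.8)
The plan is to deduce the corollary directly from Lemma \ref{admissibility} by checking its two hypotheses for a function $\vphi$ supported on $Q$ with $\|\nabla\vphi\|_\infty\le\ell(Q)^{-1}$. Write $Q=Q_1\times I_Q$ with $Q_1\subset\R^N$ a cube of side $\ell(Q)$ and $I_Q\subset\R$ an interval of length $\ell(Q)^{2s}=\ell(Q)$ (recall $s=1/2$ here), so that $|Q_1|\approx\ell(Q)^N$, $|I_Q|\approx\ell(Q)$, and $|Q|\approx\ell(Q)^{N+1}$.

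For hypothesis (1) of Lemma \ref{admissibility}: since $\vphi$ is supported on $Q$ and $|\partial_t\vphi|\le\|\nabla\vphi\|_\infty\le\ell(Q)^{-1}$, we get at once
$$\|\partial_t\vphi\|_{L^1(Q)}\le\ell(Q)^{-1}\,|Q|\approx\ell(Q)^{N}.$$

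For hypothesis (2): fix $j\in\{1,\dots,N\}$ and $t\in I_Q$. The slice $\partial_j\vphi(\cdot,t)$ is supported on $Q_1$ and bounded by $\ell(Q)^{-1}$, so $\|\partial_j\vphi(\cdot,t)\|_{L^2(\R^N)}\le\ell(Q)^{-1}|Q_1|^{1/2}\approx\ell(Q)^{N/2-1}$. Since $R_j$ is bounded on $L^2(\R^N)$ and $|2Q_1|\approx\ell(Q)^N$, Cauchy--Schwarz on the bounded set $2Q_1$ gives
$$\|R_j\partial_j\vphi(\cdot,t)\|_{L^1(2Q_1)}\le|2Q_1|^{1/2}\,\|R_j\partial_j\vphi(\cdot,t)\|_{L^2(\R^N)}\lesssim\ell(Q)^{N/2}\cdot\ell(Q)^{N/2-1}=\ell(Q)^{N-1}.$$
Integrating in $t$ over $I_Q$, whose length is $\approx\ell(Q)$, yields $\int_{I_Q}\|R_j\partial_j\vphi(\cdot,t)\|_{L^1(2Q_1)}\,dt\lesssim\ell(Q)^{N}$, which is exactly hypothesis (2). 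Lemma \ref{admissibility} then shows $\vphi$ is admissible for $Q$.

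There is essentially no obstacle in this argument; the only point worth flagging is that one must apply the $L^2$-boundedness of the Riesz transforms slice-by-slice in the time variable and then pass back to an $L^1$ bound over the \emph{bounded} set $2Q_1$ via Cauchy--Schwarz --- and this is precisely the shape of estimate that Lemma \ref{admissibility} was set up to absorb, since the far-field contribution $\R^N\setminus 2Q_1$ was already handled there.
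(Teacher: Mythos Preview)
Your proof is correct and follows essentially the same approach as the paper's: verify both hypotheses of Lemma \ref{admissibility}, with the key step for condition (2) being Cauchy--Schwarz on $2Q_1$ combined with the $L^2(\R^N)$-boundedness of the Riesz transforms applied slice-by-slice in $t$. The paper's argument is slightly terser but identical in substance.
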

\begin{proof}
The first condition in lemma \ref{admissibility} is clearly fulfilled.  To show the second condition in lemma \ref{admissibility} we will use the Cauchy-Schwarz inequality and the $L^2-$boundedness of the Riesz transforms as follows:
\begin{align*}
\displaystyle\int_{I_Q}\|R_j\partial_j\vphi(\cdot,t)\|_{L^1(2Q_1)}dt&\lesssim \int_{I_Q}\ell(Q)^{N/2}\|R_j\partial_j\vphi(\cdot,t)\|_{L^2(2Q_1)}dt\\&\lesssim\int_{I_Q}\ell(Q)^{N/2}\|\partial_j\vphi(\cdot,t)\|_{L^2(2Q_1)}dt\lesssim \ell(Q)^N,
\end{align*}
Therefore, standard test functions supported on $Q$ are admissible for $Q$.
\end{proof}

It is worth mentioning here that if $\vphi$ is a standard ${\mathcal C}^2$ test function supported on $Q\subset\R^{N+1}$ satisfying $\|\partial_t\vphi\|_\infty\lesssim\ell(Q)^{-1}$ and $\|\Delta_x\vphi\|_\infty\lesssim\ell(Q)^{-2}$, then $\vphi$ is admissible for $Q$. The first condition in \eqref{admissiblephi} is clear and for the second one, notice that if $g=\Delta_x\varphi*_x k$, with $k(x)=\frac 1{|x|^{N-1}}$ and $*_x$ denoting the convolution on the $x$ variable, then taking the Fourier transform with respect to $x$, we get: $(-\Delta_x)^{1/2}\vphi=cg$, for a suitable constant $c\neq 0$. Then, integrating on annuli and using that $\|\Delta_x\vphi\|_\infty\lesssim\ell(Q)^{-2}$,
\begin{align}\label{standardtestok}
\int_{\R^{N+1}}|(-\Delta_x)^{1/2}\vphi(x,t)|dxdt&\approx\int_{\R^{N+1}}|\Delta_x\varphi*_x k(x,t)|dxdt\nonumber\\&\lesssim\int_{I_Q}\int_{\R^N\setminus2Q_1}\int_{Q_1}\frac{|\varphi(y,t)|}{|x-y|^{N+1}}dydxdt+\int_{I_Q}\int_{2Q_1}\int_{Q_1}\frac{|\Delta_y\varphi(y,t)|}{|x-y|^{N-1}}dydxdt\\&\lesssim\sum_{k=1}^\infty\frac{\ell(Q)^{N+1}(2^k\ell(Q))^{N}}{(2^k\ell(Q))^{N+1}}+\ell(Q)^N\lesssim\ell(Q)^N\nonumber.
\end{align}
\newline

The following lemma shows an $N-$growth condition that every distribution $\nu$ fulfilling the hypothesis of Theorem \ref{teoloc} satisfies.

\begin{lemma}\label{lemgrow}
Let $\nu$ be a distribution in $\R^{N+1}$ with
$\|P*\nu\|_\infty \leq 1.$
If $\vphi$ is a ${\mathcal C}^1$ function admissible for $Q\subset\R^{N+1}$, then
$|\langle \nu,\vphi\rangle|\lesssim \ell(Q)^N.$
\end{lemma}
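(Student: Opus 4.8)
The plan is to recover $\nu$ from its potential by inverting the fractional heat operator and then dualizing, exactly as one does for the $1$-growth estimates behind analytic or Lipschitz harmonic capacity. Write $\Theta^{1/2}=(-\Delta_x)^{1/2}+\partial_t$. Since $P$ is the fundamental solution of $\Theta^{1/2}$, we have $\Theta^{1/2}(P*\nu)=\nu$ in $\mathcal D'(\R^{N+1})$ (up to a multiplicative normalization constant which is irrelevant here). The formal transpose of $\Theta^{1/2}$ is $\varphi\mapsto(-\Delta_x)^{1/2}\varphi-\partial_t\varphi$, because $(-\Delta_x)^{1/2}$ is self-adjoint while $\partial_t$ is skew-adjoint. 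Hence
\begin{equation*}
\langle\nu,\varphi\rangle=\langle\Theta^{1/2}(P*\nu),\varphi\rangle=\int_{\R^{N+1}}(P*\nu)(x,t)\,\big[(-\Delta_x)^{1/2}\varphi(x,t)-\partial_t\varphi(x,t)\big]\,dx\,dt ,
\end{equation*}
and the estimate then drops out by placing the bounded factor $P*\nu$ in $L^\infty$ and the bracket in $L^1$:
\begin{equation*}
|\langle\nu,\varphi\rangle|\le\|P*\nu\|_\infty\Big(\|(-\Delta_x)^{1/2}\varphi\|_{L^1(\R^{N+1})}+\|\partial_t\varphi\|_{L^1(Q)}\Big)\lesssim\ell(Q)^N ,
\end{equation*}
where the final inequality is precisely the admissibility of $\varphi$, i.e.\ condition \eqref{admissiblephi}.

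So essentially all the content of the lemma is the displayed duality identity, and that is where the (mild) technical care must go. First I would observe that, $\varphi$ being $\mathcal C^1$ with compact support, $\partial_t\varphi\in L^1(\R^{N+1})$ automatically, while $(-\Delta_x)^{1/2}\varphi$ is well defined pointwise by its absolutely convergent singular-integral formula and decays like $|x|^{-N-1}$ in the space variable, so it too is integrable; admissibility upgrades both facts to the quantitative $L^1$ bounds by $\ell(Q)^N$. In particular the integral above is absolutely convergent once one knows that $P*\nu$ is (identified with) an $L^\infty$ function. Second, I would justify transferring $\partial_t$ and $(-\Delta_x)^{1/2}$ from $P*\nu$ onto $\varphi$: for $\partial_t$ this is just the definition of the distributional time derivative, and for $(-\Delta_x)^{1/2}$ one invokes its self-adjointness, conveniently through the representation of $(-\Delta_x)^{1/2}\varphi$ in terms of Riesz transforms recalled just after \eqref{admissiblephi}, together with the $L^2$ boundedness of those transforms and a routine approximation.

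The step I expect to be the only genuine obstacle is this last rigor check: making sense of $\Theta^{1/2}(P*\nu)=\nu$ and of the pairing $\langle P*\nu,(-\Delta_x)^{1/2}\varphi-\partial_t\varphi\rangle$ when $\nu$ is merely a distribution (so $P*\nu$ need not decay and is only defined a priori when $\nu$ has compact support) and $\varphi$ is only $\mathcal C^1$ rather than Schwartz. This is handled by exploiting the explicit $|x|^{-N-1}$ spatial decay of $(-\Delta_x)^{1/2}\varphi$, which is exactly what allows it to be paired with the bounded function $P*\nu$, and, if needed, by regularizing $\nu$ and $\varphi$ and passing to the limit. Beyond this, the proof is an immediate consequence of the hypothesis $\|P*\nu\|_\infty\le1$ and the two conditions \eqref{admissiblephi}; in particular no splitting of $Q$ into near and far regions, and none of the pointwise kernel bounds of Lemma \ref{bound}, are needed here.
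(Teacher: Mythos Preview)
Your approach is essentially identical to the paper's own proof, which is the one-line computation
\[
|\langle \nu,\vphi\rangle|=|\langle \nu,\Theta^{1/2}\vphi*P\rangle|\leq|\langle P*\nu,(-\Delta_x)^{1/2}\vphi\rangle|+|\langle P*\nu,\partial_t\vphi\rangle|\lesssim\ell(Q)^N,
\]
invoking $\|P*\nu\|_\infty\le1$ and the admissibility conditions \eqref{admissiblephi}. The paper does not pause over the rigor issues you raise (compact support of $\nu$, pairing $P*\nu$ with $(-\Delta_x)^{1/2}\vphi$ when $\vphi$ is only $\mathcal C^1$), so your discussion of those points is if anything more careful than the original; but the mathematical idea is exactly the same.
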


\begin{proof}
Since $P$ is the fundamental solution of $\Theta^{1/2}$, we can write
$$|\langle \nu,\vphi \rangle|=|\langle \nu,\Theta^{1/2}\vphi*P \rangle|\leq|\langle P*\nu,(-\Delta_x)^{1/2}\vphi \rangle|+|\langle P*\nu,\partial_t\vphi \rangle|\lesssim\ell(Q)^N,$$
because $\|P*\nu\|_\infty \leq 1$ and $\vphi$ is admissible for $Q$, so it satisfies \eqref{admissiblephi}.
\end{proof}

We will say that a distribution $\nu$ has $N$-growth if for any $Q\subset\R^{N+1}$ and any $\vphi\in{\mathcal C}^1$ admissible for $Q$, $|\langle \nu,\vphi\rangle|\lesssim \ell(Q)^N$.

\begin{lemma}\label{punt}
Let $\nu$ be a distribution in $\R^{N+1}$ with $N$-growth. If $\vphi$ is a ${\mathcal C}^1$ function supported on a cube $Q\subset\R^{N+1}$ with $\|\nabla\varphi\|_\infty\leq \ell(Q)^{-1}$, then the distribution $\displaystyle P*\vphi\nu$ is a locally integrable function and there exists a point $\bar x_0\in\frac14Q$ such that \begin{equation}\label{pointineq}|(P*\vphi\nu)(\bar x_0)|\lesssim 1.\end{equation}
\end{lemma}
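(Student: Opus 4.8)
The plan is to average the potential $P*\vphi\nu$ over a suitable region of $\frac14 Q$ and show the average is $\lesssim 1$; then a point $\bar x_0$ with the desired bound must exist. First I would fix a nonnegative ${\mathcal C}^\infty$ bump function $\psi$ supported on $\frac14 Q$ with $\int\psi\,d\bar x \approx \ell(Q)^{N+1}$ (the Lebesgue measure of $\frac14 Q$, since $Q$ is an $s$-parabolic cube with $s=1/2$, so $|Q|\approx \ell(Q)^{N+1}$) and $\|\nabla\psi\|_\infty \lesssim \ell(Q)^{-1}$, and rescale so that $\frac1{|\frac14Q|}\int\psi = 1$, i.e. I really work with $\tilde\psi = \psi/\|\psi\|_{L^1}$. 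The quantity to estimate is then
\[
\Big\langle \tilde\psi,\; P*\vphi\nu\Big\rangle = \big\langle \vphi\nu,\; \check P * \tilde\psi\big\rangle,
\]
where $\check P(\bar x) = P(-\bar x)$; this pairing makes sense once we check $\check P*\tilde\psi$ is a ${\mathcal C}^1$ function that, multiplied by $\vphi$, is admissible for a cube comparable to $Q$, so that the $N$-growth hypothesis on $\nu$ applies and yields the bound $\lesssim \ell(Q)^N$. Dividing by $|\frac14Q|\approx\ell(Q)^{N+1}$ is the wrong normalization — instead one should note $\langle\tilde\psi,P*\vphi\nu\rangle$ already has the size of a weighted average, so I would rather keep $\psi$ with $\|\psi\|_{L^1}\approx\ell(Q)^{N+1}$, get $|\langle\psi, P*\vphi\nu\rangle|\lesssim \ell(Q)^N$ only if the test function $\vphi\cdot(\check P*\psi)$ is admissible for a cube of sidelength $\approx\ell(Q)$, and then divide: $\frac1{\|\psi\|_{L^1}}|\langle\psi,P*\vphi\nu\rangle|\lesssim \ell(Q)^N/\ell(Q)^{N+1}$, which is too small — so in fact the correct statement is that $g:=\check P*\psi$ must be shown admissible with the \emph{right} power, namely $|\langle\nu,\vphi g\rangle|\lesssim\ell(Q)^N\cdot(\text{something})$. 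Let me restate: I will choose $\psi$ with $\int\psi = 1$ supported on $\frac14Q$ and $\|\nabla\psi\|_\infty\lesssim\ell(Q)^{-(N+1)-1}$ is not natural either. The clean route is: take $\psi\ge 0$, $\supp\psi\subset\frac14Q$, $\int\psi\,d\bar x = 1$, with $\|\nabla\psi\|_\infty\lesssim \ell(Q)^{-(N+2)}$ (forced by the normalization $\int\psi=1$ on a set of measure $\approx\ell(Q)^{N+1}$); set $g = \check P*\psi$. Then $\langle P*\vphi\nu,\psi\rangle = \langle\nu,\vphi g\rangle$, and I must show $\vphi g$ is, up to a bounded factor, admissible for (a dilate of) $Q$, which via the $N$-growth property gives $|\langle\nu,\vphi g\rangle|\lesssim\ell(Q)^N$; since $\ell(Q)^N$ is \emph{not} $\lesssim 1$ in general, this shows the averaging must be done differently.

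The right formulation, and the one I would carry out: there exists $\bar x_0\in\frac14Q$ with $|(P*\vphi\nu)(\bar x_0)|\lesssim 1$ because $P*\vphi\nu$ is first shown to be an $L^1_{\rm loc}$ function (this follows from Lemma \ref{bound}(a), $P\in L^1_{\rm loc}$, once one knows $\vphi\nu$ is, say, a finite-order distribution with compact support, which it is), and then one estimates $\int_{\frac14Q}|P*\vphi\nu|$ is \emph{not} what one wants (no absolute values available from duality). So instead I pair against a \emph{signed} test function. Here is the actual mechanism: the function $h(\bar y) := \frac1{|\frac14Q|}\mathbbm 1_{\frac14Q}(\bar y)$ — regularized slightly to a ${\mathcal C}^1$ function $\psi$ — is such that $\check P * \psi$ is comparable to $P$ restricted near $Q$, hence $\vphi\cdot(\check P*\psi)$ should be admissible for $CQ$ with admissibility constants of order $\ell(Q)^N$ times the normalization $|\frac14Q|^{-1}\approx\ell(Q)^{-(N+1)}$, giving $|\langle\nu,\vphi(\check P*\psi)\rangle|\lesssim \ell(Q)^{-(N+1)}\cdot\ell(Q)^{N}\cdot\ell(Q) = 1$ after correctly tracking that $\check P*\psi$ on $\supp\vphi$ has size $\lesssim \ell(Q)^{-N}$ (by Lemma \ref{bound}(a) and $\int\psi\approx 1$, wait — with $\int\psi=1$, $\check P*\psi$ has size $\lesssim \ell(Q)^{-N}$ near $Q$) and derivatives of size $\lesssim\ell(Q)^{-N-1}$, so $\ell(Q)^N\cdot(\check P*\psi)\cdot$\,stuff is a standard test function scaled by $\ell(Q)^{-N}$; multiplying by this scalar the admissibility bound of Corollary \ref{standardtest} I get $|\langle\nu,\vphi(\check P*\psi)\rangle|\lesssim \ell(Q)^{-N}\cdot\ell(Q)^N = 1$. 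Then $\langle P*\vphi\nu,\psi\rangle = \langle\nu, \vphi\,\check P*\psi\rangle$ has absolute value $\lesssim 1$, and since $\psi\ge0$ with $\int\psi=1$ is supported on $\frac14Q$, some $\bar x_0\in\frac14Q$ satisfies $|(P*\vphi\nu)(\bar x_0)|\lesssim 1$, as $P*\vphi\nu\in L^1_{\rm loc}$ makes this averaging legitimate. (More carefully one should take $\psi$ a mollification of $\mathbbm 1_{\frac18 Q}/|\frac18Q|$ so that $\check P*\psi$ and its gradient are controlled by $\dist(\cdot,\frac18Q)^{-N}$, $\dist(\cdot,\frac18Q)^{-N-1}$ on all of $Q\supset\supp\vphi$, using Lemma \ref{bound}(a),(b).)

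The two technical points I need to nail down are: (i) that $P*\vphi\nu$ is genuinely a locally integrable function, which I would get by writing it as $\nu$ paired against translates of $P$ and using that $\nu$ has finite order (locally a derivative of a measure) together with $P\in L^1_{\rm loc}$ and the smoothness of $\vphi$ — or more slickly, by noting $P*\vphi\nu = (P*\psi_\delta)*\vphi\nu$ in the limit and that $N$-growth gives uniform local $L^1$ bounds; and (ii) the admissibility of $\vphi\cdot(\check P*\psi)$ for a cube $\approx Q$, i.e. verifying the two conditions of Lemma \ref{admissibility} for this product. For (ii) I compute $\partial_t(\vphi\,\check P*\psi) = (\partial_t\vphi)(\check P*\psi) + \vphi\,\partial_t(\check P*\psi) = (\partial_t\vphi)(\check P*\psi) + \vphi\,(\check P*\partial_t\psi)$ and similarly $R_j\partial_j$ of the product, and bound each using $\|\nabla\vphi\|_\infty\le\ell(Q)^{-1}$, $\|\vphi\|_\infty\le 1$, and the size/derivative bounds on $\check P*\psi$ from Lemma \ref{bound}; the Riesz-transform term is handled exactly as in Corollary \ref{standardtest} via Cauchy–Schwarz and $L^2$-boundedness of $R_j$. \textbf{The main obstacle} I anticipate is bookkeeping the scaling so that all the $\ell(Q)$ powers cancel to give the clean constant $1$ — in particular making sure the mollified indicator $\psi$ is chosen with the correct normalization ($\int\psi=1$) and support (strictly inside $\frac14Q$, so that $\check P*\psi$ is smooth with the right decay estimates on $\supp\vphi$), and that the admissibility of the product is measured against a cube of sidelength comparable to $\ell(Q)$ rather than something larger, which would weaken the bound. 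Once the scaling is right, the rest is a routine application of Lemma \ref{admissibility}, Corollary \ref{standardtest}, the $N$-growth hypothesis, and the mean-value/averaging argument to extract the point $\bar x_0$.
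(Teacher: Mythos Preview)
Your final mechanism --- pair $P*\vphi\nu$ against a single nonnegative bump $\psi\geq 0$ with $\int\psi=1$ supported in $\frac14Q$, show that $\vphi\,(\check P*\psi)$ is (up to a factor $\ell(Q)^N$) admissible for $Q$, and conclude $|\langle P*\vphi\nu,\psi\rangle|\lesssim 1$ --- does go through, and your scaling is correct: $\ell(Q)^N\vphi(\check P*\psi)$ is a $\mathcal C^1$ function supported on $Q$ with $\|\nabla(\,\cdot\,)\|_\infty\lesssim\ell(Q)^{-1}$, so Corollary~\ref{standardtest} and the $N$-growth hypothesis give $|\langle\nu,\vphi(\check P*\psi)\rangle|\lesssim 1$. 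The problem is the last step. From $\big|\int f\,\psi\big|\lesssim 1$ with $\psi\geq 0$, $\int\psi=1$, you \emph{cannot} conclude that $|f(\bar x_0)|\lesssim 1$ for some $\bar x_0\in\supp\psi$: the signed average being small does not prevent $f$ from being large in absolute value everywhere (take $f=+M$ on half of $\supp\psi$ and $f=-M$ on the other half). The mean-value reasoning you invoke only gives a point where $f$ is close to the average \emph{value}, not where $|f|$ is close to the average of $|f|$. So the argument as written does not produce $\bar x_0$.

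What is needed is control of $\displaystyle\avint_{\frac14Q}|P*\vphi\nu|$, not just of one signed average. The paper obtains this by proving a genuine $L^p$ bound: for $1<p<\frac{N+1}{N}$ (equivalently $q>N+1$), one shows
\[
|\langle P*\vphi\nu,\psi\rangle|=|\langle\nu,\vphi(P*\psi)\rangle|\lesssim \ell(Q)^{(N+1)/p}\|\psi\|_{L^q}
\]
for \emph{every} test function $\psi$ supported on $2Q$, by checking that $h=\vphi(P*\psi)/\big(\ell(Q)^{(N+1)/p-N}\|\psi\|_q\big)$ satisfies the conditions of Lemma~\ref{admissibility} (the Riesz-transform condition uses H\"older and $L^q$-boundedness of $R_j$, and the crucial constraint $q>N+1$ enters in integrating $|\bar x-\bar y|^{-Np}$ over $2Q$). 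This yields $\|P*\vphi\nu\|_{L^p(2Q)}\lesssim\ell(Q)^{(N+1)/p}$, which simultaneously (i) establishes local integrability --- your sketch for this point was not a proof --- and (ii) gives, via H\"older, $\displaystyle\avint_{\frac14Q}|P*\vphi\nu|\lesssim 1$, from which the existence of $\bar x_0$ is immediate. The admissibility verification you outlined is essentially right in spirit; what has to change is that it must be carried out uniformly over all $\psi$, normalized by $\|\psi\|_{L^q}$ rather than by $\int\psi$.
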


\begin{proof}
We will show that the mean of $\displaystyle f=P*\vphi\nu$ on $\frac14Q$ is bounded by a constant. Hence at many Lebesgue points of 
$f$ the inequality \eqref{pointineq} holds. 

We only need to show that $\displaystyle P*\vphi\nu$ is integrable on $2Q$. In fact, we will prove a stronger statement, namely that $\displaystyle P*\vphi\nu$  is in $L^p(2Q)$ for each $1< p<\frac{N+1}{N}$. Indeed, fix any $q$ satisfying $N+1<q<\infty$ and let $p$ be its dual exponent, so that $1< p<\frac{N+1}{N}$. We need to estimate the action of $\displaystyle P*\vphi\nu$ on test functions $\psi$ supported on $2Q$ in terms of $\|\psi\|_q$. We clearly have $$\langle P*\vphi\nu,\psi\rangle=\langle\nu,\vphi(P*\psi)\rangle.$$
We claim that the test function \begin{equation}\label{testfunction}h=\frac{\vphi(P*\psi)}{\ell(Q)^{(N+1)/p-N}\|\psi\|_q}\end{equation} satisfies the inequalities in lemma \ref{admissibility} and hence it is and admissible function. Once this is proved we get
$$|\langle P*\vphi\nu,\psi\rangle|=|\langle \nu,\vphi(P*\psi)\rangle|=\ell(Q)^{(N+1)/p-N}\|\psi\|_q|\langle\nu,h\rangle|\lesssim \ell(Q)^{(N+1)/p}\|\psi\|_q$$
and so $$\|P*\vphi\nu\|_{L^p(2Q)}\lesssim\ell(Q)^{(N+1)/p}.$$
Hence
\begin{align*}
\frac1{|\frac14Q|}\int_{\frac14Q}|(P*\vphi\nu)(x)|dx\leq&\;\frac{4^{N+1}}{|Q|}\int_Q|(P*\vphi\nu)(x)|dx\\\leq&\;4^{N+1}\left(\frac1{|Q|}\int_Q|(P*\vphi\nu)(x)|^pdx\right)^{1/p}\leq C
\end{align*}
which completes the proof of the lemma.

To prove the claim, we need to show that $h$ satisfies the inequalities in lemma \ref{admissibility}, that is
$$\|\nabla h\|_{L^1(2Q)}\lesssim\ell(Q)^N\quad\mbox{and}\quad\int_{I_Q}\|R_j(\partial_jh)\|_{L^1(2Q_1)}dt\lesssim\ell(Q)^N,\;1\leq j\leq N.$$ 
Or equivalently
\begin{equation}\label{equiv}
\|\nabla\left(\vphi(P*\psi)\right)\|_{L^1(Q)}\lesssim\ell(Q)^{\frac{N+1}p}\|\psi\|_q\quad\mbox{and}\quad\int_{I_Q}\|R_j\partial_j\big(\vphi(P*\psi)\big)\|_{L^1(2Q_1)}dt\lesssim\ell(Q)^{\frac{N+1}p}\|\psi\|_q,
\end{equation}
for $1\leq j\leq N.$ For the first inequality in \eqref{equiv} apply H\"older's inequality to obtain
\begin{align*}
\|\nabla\left(\vphi(P*\psi)\right)\|_{L^1(2Q)}&\lesssim\ell(Q)^{\frac{N+1}p}\|\nabla\left(\vphi(P*\psi)\right)\|_{L^q(2Q)}\\&\leq\ell(Q)^{\frac{N+1}p}\left(\|\nabla\vphi(P*\psi)\|_{L^q(2Q)}+\|\vphi\nabla(P*\psi)\|_{L^q(2Q)}\right)\\&=\ell(Q)^{\frac{N+1}p}(A_1+A_2).
\end{align*}
Since $\|\nabla\vphi\|_{\infty}\leq\ell(Q)^{-1}$ and by H\"older again,
\begin{align*}
A_1&\lesssim\ell(Q)^{-1}\left(\int_{2Q}\left(\int_{2Q}\frac{|\psi(\bar y)|}{|\bar y-\bar x|^N}d\bar y\right)^{q}d\bar x\right)^{1/q}\\&\leq\ell(Q)^{-1}\|\psi\|_{L^q(2Q)}\left(\int_{2Q}\left(\int_{2Q}\frac{d\bar y}{|\bar y-\bar x|^{Np}}\right)^{q/p}d\bar x\right)^{1/q}\\&\lesssim\ell(Q)^{-1}\|\psi\|_{L^q(2Q)}\left(\ell(Q)^{N+1}\ell(Q)^{(N+1-Np)\frac qp}\right)^{1/q}=\|\psi\|_{L^q(2Q)}.
\end{align*}
Applying \cite[Theorem 4.12]{duandikoetxea}, for example, we deduce that the singular integral with kernel $\partial_t P(x,t)$ is bounded in $L^q(\R^{N+1})$, $1<q<\infty$.
Therefore
$\displaystyle\|\vphi\,\partial_t(P*\psi)\|_{L^q(2Q)}\lesssim\|\psi\|_{L^q(2Q)}$. The estimate of the $L^q(Q)-$norm of $\vphi\partial_j(P*\psi)$  in $A_2$ is analogous. Finally $A_2\lesssim\|\psi\|_{L^q(2Q)}$ so the first inequality in \eqref{equiv} is proven.

For the second one write
$$\int_{2Q}|R_j\partial_j(\vphi(P*\psi))|\leq\int_{2Q}|R_j(\partial_j\vphi(P*\psi))|+\int_{2Q}|R_j(\vphi\partial_j(P*\psi))|=A_3+A_4.$$
Using H\"older, the $L^q(\R^N)-$boundedness of the Riesz transform and arguing similar to what we have just done for the term $A_1$,
\begin{align*}
A_3&\lesssim\ell(Q)^{\frac{N+1}p}\left(\int_{2Q}\big|\partial_j\vphi(\bar x)(P*\psi)(\bar x)\big|^qd\bar x\right)^{1/q}\leq\ell(Q)^{\frac{N+1}p-1}\left(\int_{2Q}\big|(P*\psi)(\bar x)\big|^qd\bar x\right)^{1/q}\\&\leq \ell(Q)^{\frac{N+1}p-1}\left(\int_{2Q}\left(\int_{2Q}\frac{|\psi(\bar y)|}{|\bar y-\bar x|^N}d\bar y\right)^{q}d\bar x\right)^{1/q}\leq\ell(Q)^{\frac{N+1}p}\|\psi\|_{L^q(2Q)}.
\end{align*}
Analogously,
\begin{align*}
A_4\lesssim\ell(Q)^{\frac{N+1}{p}}\left(\int_{2Q}\big|\vphi(\bar x)\partial_j(P*\psi)(\bar x)\big|^qd\bar x\right)^{1/q}\lesssim \ell(Q)^{\frac{N+1}p}\|\psi\|_{L^q(2Q)},
\end{align*}
by an argument similar to the one of $A_2$.
\end{proof}

{\em Proof of Theorem $3.1$.}
Take $\bx\in\frac32 Q$ and write $P_{\bar x}(\bar y)=P(\bar x-\bar y)$. We have to show that $\displaystyle\left|(P*\vphi\nu)(\bar x)\right|\lesssim 1$.  Write
\begin{align*}
\left|(P*\vphi\nu)(\bar x)\right|&\leq\left|(P*\vphi\nu)(\bar x)-\varphi(\bx)(P*\nu)(\bx)\right|+\|\vphi\|_\infty\|P*\nu\|_\infty\\&\leq\left|(P*\vphi\nu)(\bar x)-\varphi(\bx)(P*\nu)(\bx)\right|+C.
\end{align*}
Let $\psi$ be a ${\mathcal C}^1$ function such that $\psi\equiv 1$ in $2Q$, $\psi\equiv 0$ in $(4Q)^c$ and $\|\nabla\psi\|_\infty\leq\ell(Q)^{-1}$. We need to resort a standard regularization process. Take $\chi\in\mathcal{C}^{\infty}(B(0,1))$ such that $\int\chi(x)dx=1$ and set $\chi_\ve(x)=\ve^{-n}\chi(x/\ve)$ and $P^\ve=\chi_\ve*P$. We want to estimate \begin{equation}\label{truncation} \left|(P^\ve*\vphi\nu)(\bar x)-\varphi(\bx)(P^\ve*\nu)(\bx)\right|\end{equation} uniformly on $\chi$ and $\ve$. Since, as $\ve$ tends to zero, \eqref{truncation} tends to $\displaystyle\left|(P*\vphi\nu)(\bar x)-\varphi(\bx)(P*\nu)(\bx)\right|$ for almost all $\bar x\in\R^{N+1}$, this allows the transfer of uniform estimates.
\begin{align*}
\left|(P^\ve*\vphi\nu)(\bar x)-\varphi(\bx)(P^\ve*\nu)(\bx)\right|&=\left|\langle\nu,\vphi(\by)P^\ve_{\bx}(\by)-\vphi(\bx)P^\ve_{\bx}(\by)\rangle\right|\\&\leq\left|\langle\nu,\psi P^\ve_{\bx}\left(\vphi-\vphi(\bx)\right)\rangle\right|+\left|\langle\nu,(1-\psi)\vphi(\bx)P^\ve_{\bx}\rangle\right|\\&=A+B.
\end{align*}
To estimate term $A$, we will show that $h(\by)=\ell(Q)^N\psi(\by)P_{\bx}^\ve(\by)(\vphi(\by) -\vphi(\bx))$ is admissible for $4Q$ and then apply lemma \ref{lemgrow}. Since the function $$\phi(\by)=\psi(\by)\left(\vphi(\by) -\vphi(\bx)\right)$$ satisfies $\|\nabla \phi\|_\infty\lesssim\ell(Q)^{-1}$, for $\bx\neq\by$ we have $|P_{\bx}^\ve(\by)|\leq|\bx-\by|^{-N}$ and by the mean value theorem and lemma \ref{bound},$$|\phi(\by)\nabla P_{\bx}^\ve(\by)|\lesssim\|\nabla\vphi\|_\infty|\bx-\by|^{-N}\lesssim\ell(Q)^{-1}|\bx-\by|^{-N},$$  we have 
$$\|\nabla h\|_{L^1(4Q)}\lesssim\ell(Q)^N\ell(Q)^{-1}\int_{4Q}\frac{d\by}{|\bx-\by|^N}\lesssim\ell(Q)^N,$$ which is condition $(1)$ in lemma \ref{admissibility}.
To show $(2)$ in lemma \ref{admissibility} for the function $h$ we have to prove \begin{equation}\label{condition2}\int_{4I_Q}\|R_j\partial_jh(\cdot, t)\|_{L^1(8Q_1)}dt\lesssim\ell(Q)^N,\;\;j=1,2,\cdots, N.\end{equation}
Applying H\"older's inequality for some $q>1$ to be chosen later and using the $L^q(\R^N)$-boundedness of the Riesz transform $R_j$, we have
\begin{align*}
\int_{4I_Q}\|R_j\partial_jh(\cdot, t)\|_{L^1(8Q_1)}dt&=\int_{4I_Q}\int_{8Q_1}|R_j\partial_jh(x,t)|dxdt\\&\leq\int_{4I_Q}\ell(Q)^{N/p}\|R_j(\partial_j h)(\cdot, t)\|_{L^q(\R^N)}dt\\&\lesssim\ell(Q)^{N/p}\int_{4I_Q}\|\partial_j h(\cdot,t)\|_{L^q(\R^N)}dt.
\end{align*}
To estimate the last integral, write $$\|\partial_j h(\cdot,t)\|_{L^q(\R^N)}\lesssim\ell(Q)^N\left(\|\partial_j\phi P_{\bar x}^{\ve}\|_{L^q(\R^N)}+\|\phi\partial_jP_{\bar x}^{\ve}\|_{L^q(\R^N)}\right)=A_1+A_2.$$
Using that $\|\nabla \phi\|_\infty\lesssim\ell(Q)^{-1}$ and \begin{equation}\label{P}|P_{\bar x}^\ve(\bar y)|=|(\chi_\ve*P_{\bar x})(\bar y)|\lesssim\frac1{|\bar x-\bar y|^N}\lesssim\frac1{\sqrt{|t-s|}\;|x-y|^{N-1/2}},\end{equation} we get
\begin{align*}
\int_{4I_Q}A_1\;dt&=\ell(Q)^N\int_{4I_Q}\left(\int_{4Q_1}|\partial_j\phi(y,t)P_{\bar x}^\ve(\bar y)|^qdy\right)^{1/q}dt\\&\lesssim\ell(Q)^{N-1}\int_{4I_Q}\frac{dt}{\sqrt{|t-s|}}\left(\int_{4Q_1}\frac{dy}{|x-y|^{(N-1/2)q}}\right)^{1/q}\\&\lesssim\ell(Q)^{\frac Nq-\frac12}\int_{4I_Q}\frac{dt}{\sqrt{|t-s|}}\\&\lesssim\ell(Q)^{N/q},
\end{align*}
chosing $1<q<N/(N-1/2)$. Notice that $\displaystyle|(\chi_\ve*\partial_jP)(\bar y)|\lesssim\frac1{|\bar y|^{N+1}},$ for $\bar y\neq 0$. Therefore, using the mean value theorem and arguing as above we get
\begin{align*}
\int_{4I_Q}A_2\;dt&=\ell(Q)^N\int_{4I_Q}\|\phi\partial_jP_{\bar x}^\ve\|_{L^q(\R^N)}dt\\&=\ell(Q)^N\int_{4I_Q}\left(\int_{4Q_1}|\psi(\bar y)(\varphi(\bar y)-\varphi(\bar x))\partial_jP_{\bar x}^{\ve}(\bar y)|^qdy\right)^{1/q}dt\\&\lesssim\ell(Q)^{N-1}\int_{4I_Q}\frac{dt}{\sqrt{|t-s|}}\left(\int_{4Q_1}\frac{dy}{|x-y|^{(N-1/2)q}}\right)^{1/q}\\&\lesssim\ell(Q)^{\frac Nq-\frac12}\int_{4I_Q}\frac{dt}{\sqrt{|t-s|}}\\&\lesssim\ell(Q)^{N/q}.
\end{align*}
Since \eqref{condition2} holds, $h$ is admissible for $4Q$ and by lemma \ref{lemgrow} we get $A\lesssim 1$.\newline

To estimate term $\displaystyle B=\left|\langle\nu,(1-\psi)\vphi(\bx)P^\ve_{\bx}\rangle\right|$ we will use lemma \ref{punt}, i.e. the fact that there exists $\bar x_0\in Q$ such that 
$$
|(P*\psi\nu)(\bar x_0)|\lesssim 1.
$$
Since $\|P*\nu\|_\infty\leq 1$, we clearly have $\displaystyle |(P*(1-\psi)\nu)(\bar x_0)|\lesssim C$.  The analogous inequality holds as well for the regularized potentials appearing in $B$, uniformly in $\ve$, and therefore

$$
B\leq\|\varphi\|_\infty|\langle \nu,(1-\psi)P^\ve_{\bar x} \rangle|\lesssim |\langle \nu,(1-\psi)(P^{\ve}_{\bar x}-P^{\ve}_{\bar x_0})\rangle|+C.
$$
To estimate $\displaystyle |\langle \nu,(1-\psi)(P^{\ve}_{\bar x}-P^{\ve}_{\bar x_0})\rangle|$, we decompose $\R^{N+1}\setminus\{\bar x\}$ into a union of rings 
$$\{\bar z\in\R^{N+1}: 2^k\ell(Q)\leq|\bar z-\bar x|\leq 2^{k+1}\ell(Q)\},\;\;k\in\Z$$
and consider functions $\{\varphi_k\}$ in ${\mathcal C}^1(\R^{N+1})$  supported in  
$${\mathcal A}_k=\{\bar z\in\R^{N+1}: 2^{k-1}\ell(Q)\leq|\bar z-\bar x|\leq 2^{k+2}\ell(Q)\},\;\;k\in\Z$$ such that 
$\|\nabla\varphi_k\|_\infty\lesssim (2^k\ell(Q))^{-1}$ and such that $\sum_k\varphi_k=1$ in $\R^{N+1}\setminus\{\bar x\}$. Since $\bar x\in\frac32 Q$, the smallest ring ${\mathcal A}_k$ intersecting $(2Q)^c$ is ${\mathcal A}_{-3}$. Hence
\begin{align*}
|\langle \nu,(1-\psi)(P^{\ve}_{\bar x}-P^{\ve}_{\bar x_0})\rangle|&=\Big|\Big\langle \nu,\sum_{k=-3}^\infty\varphi_k(1-\psi)(P^{\ve}_{\bar x}-P^{\ve}_{\bar x_0})\Big\rangle\Big|\\&\leq\Big|\Big\langle \nu,\sum_{k\in I}\varphi_k(1-\psi)(P^{\ve}_{\bar x}-P^{\ve}_{\bar x_0})\Big\rangle\Big|+
\sum_{k\in J}\left|\left\langle \nu,\varphi_k(1-\psi)(P^{\ve}_{\bar x}-P^{\ve}_{\bar x_0})\right\rangle\right|,
\end{align*}
$I$ being the set of indices $k\geq-3$ such that supp$\;\varphi_k\cap 4Q\neq\emptyset$ and $J$ denoting the remaining indices ( i.e. $k\geq-3$ with $\varphi_k\equiv 0$ on $4Q$). Notice that the cardinality of $I$ is bounded by a dimensional constant. 

Set $$g=\ell(Q)^N\sum_{k\in I}\varphi_k(1-\psi)(P^{\ve}_{\bar x}-P^{\ve}_{\bar x_0}),$$ and for $k\in J$, $$g_k=2^k\big(2^k\ell(Q)\big)^N\varphi_k(P^{\ve}_{\bar x}-P^{\ve}_{\bar x_0}).$$
We will show now that we can apply lemma \ref{admissibility} to $g$ and $g_k,$ $k\in J$. Once this is available, lemma \ref{lemgrow} will give us
\begin{align*}
|\langle \nu,(1-\psi)(P^{\ve}_{\bar x}-P^{\ve}_{\bar x_0})\rangle|&\leq\ell(Q)^{-N}|\langle \nu,g\rangle|+\sum_{k\in J}2^{-k}\big(2^k\ell(Q)\big)^{-N}|\langle\nu,g_k\rangle|\lesssim 1+\sum_{k\in J}2^{-k}\lesssim 1.
\end{align*}
Notice first that the support of $g$ is contained in a cube $\lambda Q$ for some $\lambda$ depending only on $N$. On the other hand, the support of $g_k$ is contained in $2^{k+2}Q$. To apply lemma \ref{admissibility} we have to show that
\begin{equation}\label{g}
\|\nabla g\|_{L^1(\lambda Q)}\lesssim\ell(Q)^N,\hspace{.7cm}\int_{\lambda I_Q}\|R_j(\partial_jg)\|_{L^1(2\lambda Q_1)}dt\lesssim\ell(Q)^N,\;1\leq j\leq N,
\end{equation}
and for $k\in J$,
\begin{equation}\label{gj}
\|\nabla g_k\|_{L^1(2^{k+3} Q)}\lesssim\big(2^k\ell(Q)\big)^N,\hspace{.45cm}
\int_{2^{k+2}I_Q}\|R_j(\partial_jg_k)\|_{L^1(2^{k+3}Q_1)}dt\lesssim\big(2^k\ell(Q)\big)^N\;1\leq j\leq N,
\end{equation}
We check first \eqref{g}. Using lemma \ref{bound},
\begin{align*}
\|\partial_jg\|_{L^1(\lambda Q)}&\lesssim \ell(Q)^N\sum_{k\in I}\left(\frac1{\ell(Q)}\int_{\lambda Q\cap\supp(\vphi_k)}|P^{\ve}_{\bar x}(\bar y)-P^{\ve}_{\bar x_0}(\bar y)|d\bar y +\int_{\lambda Q\cap\supp(\vphi_k)}|\partial_j(P^{\ve}_{\bar x}(\bar y)-P^{\ve}_{\bar x_0}(\bar y))|d\bar y\right)\\&\lesssim\ell(Q)^N\left(\frac1{\ell(Q)}\int_{\lambda Q\cap\supp(\vphi_k)}\frac{d\bar y}{|\bar y-\bar x|^N}+\int_{\lambda Q\cap\supp(\vphi_k)}\frac{d\bar y}{|\bar y-\bar x|^{N+1}}\right)\lesssim\ell(Q)^N.
\end{align*}
The estimate for $\|\partial_tg\|_{L^1(\lambda Q)}$ is analogous. For the second inequality in \eqref{g}, let $1<q<\infty$ and $p$ be its dual exponent. Apply H\"older's inequality, the fact that Riesz transforms preserve $L^q(\R^N)$ and argue as in the estimates of the integrals of $A_1$ and $A_2$ (using \eqref{P}) to obtain:
\begin{align*}
\int_{\lambda I_Q}\|R_j\partial_jg(\cdot,t)\|_{L^1(2\lambda Q_1)}dt&\lesssim\int_{\lambda I_Q}\ell(Q)^{N/p}\|R_j\partial_jg(\cdot,t)\|_{L^q(2\lambda Q_1)}dt\\&\lesssim\int_{\lambda I_Q}\ell(Q)^{N/p}\|\partial_jg(\cdot,t)\|_{L^q(2\lambda Q_1)}dt\lesssim \ell(Q)^{N/p}\ell(Q)^{N/q}=\ell(Q)^N.
\end{align*}

To show the first inequality in \eqref{gj} we have to prove $\|\partial_tg_k\|_{L^1(2^{k+3} Q)}\lesssim\big(2^k\ell(Q)\big)^N$ and  $\|\partial_jg_k\|_{L^1(2^{k+3} Q)}\lesssim\big(2^k\ell(Q)\big)^N$, $1\leq j\leq N$, $k\in J$. For the $L^1-$norm of $\partial_jg_k$ we will use \begin{equation}\label{gradientestimatex}|\partial_jP^{\ve}_{\bar x}(\bar y)-\partial_jP^{\ve}_{\bar x_0}(\bar y)|\lesssim\frac{\ell(Q)}{(2^k\ell(Q))^{N+2}},\quad \bar y\in A_k,\;k\in J,\;1\leq j\leq N.\end{equation} 
Notice that \eqref{gradientestimatex} comes from a gradient estimate and lemma \ref{bound}. Hence $$|\partial_jg_k(\cdot,t)|\leq 2^k(2^k\ell(Q))^N\frac{\ell(Q)}{(2^k\ell(Q))^{N+2}}=\frac1{2^k\ell(Q)},$$ 
which is $\|\partial_jg_k\|_{L^1(2^{k+3} Q)}\lesssim\big(2^k\ell(Q)\big)^N$, $1\leq j\leq N$, $k\in J$.  

To show $\|\partial_tg_k\|_{L^1(2^{k+3} Q)}\lesssim\big(2^k\ell(Q)\big)^N$, we consider different cases. Write $\bar x=(x,t),\;\bar y=(y,u)$ and $\bar x_0=(x_0,t_0)$. If $t-u>0$ and $t_0-u>0$ a gradient estimate  together with lemma \ref{bound} gives us 
$$|\partial_tP^{\ve}_{\bar x}(\bar y)-\partial_tP^{\ve}_{\bar x_0}(\bar y)|\lesssim\frac{\ell(Q)}{(2^k\ell(Q))^{N+2}},\quad \bar y\in A_k,\;k\in J.$$
If  $t-u\leq 0$ and $t_0-u\leq 0$ then $\displaystyle|\partial_tP^{\ve}_{\bar x}(\bar y)-\partial_tP^{\ve}_{\bar x_0}(\bar y)|=0$ . If $t-u$ and $t_0-u$ have different signs, say $t-u>0$ and $t_0-u\leq 0$ for example, 
then for $ \bar y\in A_k,\;k\in J$, 
\begin{align*}|\partial_tP^{\ve}_{\bar x}(\bar y)-\partial_tP^{\ve}_{\bar x_0}(\bar y)|&=|\partial_tP^{\ve}_{\bar x}(\bar y)|\lesssim\frac1{|\bar x-\bar y|^{N+1}}+\frac{(t-u)^2}{|\bar x-\bar y|^{N+3}}\\&\lesssim \frac1{(2^k\ell(Q))^{N+1}}+\frac{\ell(Q)}{(2^k\ell(Q))^{N+2}}\lesssim \frac1{(2^k\ell(Q))^{N+1}}.\end{align*}
Notice that this last case only happens in a set of measure smaller or equal than $C(2^k\ell(Q))^N\ell(Q)$. Putting this estimates together we get
\begin{align*}
\|\partial_tg_k\|_{L^1(2^{k+3} Q)}&\lesssim 2^k(2^k\ell(Q))^N\left[\frac{\ell(Q)}{(2^k\ell(Q))^{N+2}}(2^k\ell(Q))^{N+1}+\frac{(2^k\ell(Q))^N\ell(Q)}{(2^k\ell(Q))^{N+1}}\right]\lesssim\big(2^k\ell(Q)\big)^N.
\end{align*} So the first inequality in \eqref{gj} holds. Moreover
\begin{align*}
\int_{2^{k+2}I_Q}\|R_j\partial_jg_k(\cdot,t)\|_{L^1(2^{k+3}Q_1)}dt&\lesssim\int_{2^{k+2}I}(2^{k+3}\ell(Q))^{N/p}\|R_j\partial_jg_k(\cdot,t)\|_{L^q(2^{k+3}Q_1)}dt\\&\lesssim\int_{2^{k+2}I_Q}(2^{k+3}\ell(Q))^{N/p}\|\partial_jg_k(\cdot,t)\|_{L^q(2^{k+3}Q_1)}dt\\&\lesssim (2^k\ell(Q))^{N/p}2^k\ell(Q)(2^k\ell(Q))^{N/q}\frac1{2^k\ell(Q)}\\&=(2^k\ell(Q))^N,
\end{align*}
which is the second inequality in \eqref{gj}.

\vv

Take $\bar x\in (\frac 3 2 Q)^c$ ($\bar x=(x,t)$). Then consider
$$\left|(P^\ve*\vphi\nu)(\bar x)\right|=\left|\langle\nu,\vphi P^\ve_{\bx}\rangle\right|=\ell(Q)^{-N}\left|\langle\nu,\ell(Q)^N\vphi P^\ve_{\bx}\rangle\right|\lesssim \ell(Q)^{-N}\ell(Q)^{N}=1$$
because the function $f(\bar y)=\ell(Q)^N\vphi(\by)P^\ve_{\bx}(\by)$ is supported on $Q$ and satisfies
$$\|\nabla f\|_\infty\leq\ell(Q)^{N}\left(\|\nabla\vphi\|_\infty\|P^\ve_{\bx}\|_\infty+\|\vphi\|_\infty\|\nabla P^\ve_{\bx}\|_\infty\right)\lesssim\ell(Q)^{N}\ell(Q)^{-N-1}=\ell(Q)^{-1},$$
which, by corollary \ref{standardtest}, implies that $f$ is admissible for $Q$ 
and we may apply lemma \ref{lemgrow} to obtain $\left|(P^\ve*\vphi\nu)(\bar x)\right|\leq C$ also in this case.

\vv

\qed

\vv

\section{Capacities and removable singularities}\label{section-capacities}

Given a compact set $E\subset \R^{N+1}$,
we define
\begin{equation}\label{eqsupgame1}
\gamma_\Theta^{1/2}(E) =\sup|\langle\nu,1\rangle|,
\end{equation}
where the supremum is taken over all distributions $\nu$ supported on $E$ such that
\begin{equation}\label{eqsupgame2}
\|P*\nu\|_{L^\infty(\R^{N+1})} \leq 1.
\end{equation}
We call $\gamma_\Theta^{1/2}(E)$ the $\frac12$-fractional caloric capacity of $E$.
We also define the capacity $\gamma_{\Theta,+}^{1/2}(E)$, 
in the same way as in \rf{eqsupgame1}, but with the supremum restricted to all positive measures $\nu$
supported on $E$ satisfying \rf{eqsupgame2}. 
Clearly,
$$\gamma_{\Theta,+}^{1/2}(E)\leq \gamma_{\Theta}^{1/2}(E).$$

Denote by the $\HH^{N}_{\infty}(E)$ the $N$-Hausdorff content of the compact set $E\subset\R^{N+1}$.

\vv

\begin{lemma}
For every Borel set $E\subset\R^{N+1}$,
$$\gamma_{\Theta,+}^{1/2}(E)\leq \gamma_\Theta^{1/2}(E) \lesssim \HH^{N}_{\infty}(E),$$
and
$$\dim_{H}(E)>N\quad\Longrightarrow \quad \gamma_\Theta^{1/2}(E)>0.$$
\end{lemma}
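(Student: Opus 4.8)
The plan is to treat the three assertions in turn. The inequality $\gamma_{\Theta,+}^{1/2}(E)\le\gamma_\Theta^{1/2}(E)$ needs no argument: every positive measure supported on $E$ is in particular a distribution supported on $E$, as already observed before the statement.

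For the bound $\gamma_\Theta^{1/2}(E)\lesssim\HH^{N}_{\infty}(E)$ I would follow the classical scheme used for analytic and Lipschitz harmonic capacity. We may assume $\HH^{N}_{\infty}(E)<\infty$, and it suffices to treat $E$ compact. Let $\nu$ be a distribution supported on $E$ with $\|P*\nu\|_\infty\le1$; by Lemma \ref{lemgrow}, $\nu$ has $N$-growth. Fix $\varepsilon>0$; passing to dyadic cubes and keeping only the maximal ones, choose a (countable) family of pairwise disjoint cubes $\{Q_i\}$ covering $E$ with $\sum_i\ell(Q_i)^N\lesssim\HH^{N}_{\infty}(E)+\varepsilon$. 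Order the cubes so that $\ell(Q_1)\ge\ell(Q_2)\ge\cdots$, take $\theta_i\in\mathcal C^\infty$ with $0\le\theta_i\le1$, $\theta_i\equiv1$ on a slight enlargement of $Q_i$, $\supp\theta_i\subset2Q_i$ and $\|\nabla\theta_i\|_\infty\lesssim\ell(Q_i)^{-1}$, and set $\varphi_i=\theta_i\prod_{j<i}(1-\theta_j)$. Then $\sum_i\varphi_i=1-\prod_i(1-\theta_i)$ equals $1$ on a neighbourhood of $E$, $\supp\varphi_i\subset2Q_i$, and $\|\nabla\varphi_i\|_\infty\lesssim\ell(Q_i)^{-1}$: by Leibniz, $\nabla\varphi_i$ only involves the gradients $\nabla\theta_j$ with $j\le i$, i.e.\ of cubes with $\ell(Q_j)\ge\ell(Q_i)$, so each such term is $\lesssim\ell(Q_j)^{-1}\le\ell(Q_i)^{-1}$, and at any given point only boundedly many of these terms are nonzero, since the annular regions $2Q_j\setminus Q_j$ of disjoint dyadic cubes have bounded overlap. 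By Corollary \ref{standardtest} a suitable fixed multiple of $\varphi_i$ is admissible for $2Q_i$, so Lemma \ref{lemgrow} gives $|\langle\nu,\varphi_i\rangle|\lesssim\ell(Q_i)^N$. Since $\sum_i\varphi_i\equiv1$ near $\supp\nu$ and all but finitely many terms vanish there,
\[
|\langle\nu,1\rangle|=\Big|\sum_i\langle\nu,\varphi_i\rangle\Big|\le\sum_i|\langle\nu,\varphi_i\rangle|\lesssim\sum_i\ell(Q_i)^N\lesssim\HH^{N}_{\infty}(E)+\varepsilon .
\]
Letting $\varepsilon\to0$ and taking the supremum over $\nu$ yields $\gamma_\Theta^{1/2}(E)\lesssim\HH^{N}_{\infty}(E)$.

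For the last implication, suppose $\dim_{H}(E)>N$ and fix $\beta$ with $N<\beta<\dim_{H}(E)$, so $\HH^\beta(E)=\infty$; choose a compact $K\subset E$ with $\HH^\beta(K)>0$. By Frostman's lemma there is a nonzero positive measure $\mu$ with $\supp\mu\subset K$ and $\mu(B_p(\bar x,r))\lesssim r^\beta$ for all $\bar x\in\R^{N+1}$, $r>0$ (parabolic and Euclidean balls being comparable here since $s=1/2$). By Lemma \ref{bound}(a), for every $\bar x$,
\[
|(P*\mu)(\bar x)|\lesssim\int\frac{d\mu(\bar y)}{|\bar x-\bar y|^N}\le N\int_0^\infty\frac{\mu(B_p(\bar x,r))}{r^{N+1}}\,dr\lesssim\int_0^1 r^{\beta-N-1}\,dr+\mu(K)\int_1^\infty r^{-N-1}\,dr ,
\]
which is finite and independent of $\bar x$ because $\beta>N$; say $\|P*\mu\|_\infty\le C_0<\infty$. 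Then $C_0^{-1}\mu$ is a competitor in the supremum defining $\gamma_\Theta^{1/2}(K)$, so $\gamma_\Theta^{1/2}(E)\ge\gamma_\Theta^{1/2}(K)\ge C_0^{-1}\mu(K)>0$.

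The step I expect to be the main obstacle is the construction of the partition of unity $\{\varphi_i\}$ with the gradient bound $\|\nabla\varphi_i\|_\infty\lesssim\ell(Q_i)^{-1}$ uniform in $i$: one must ensure that $\varphi_i$ does not inherit the (larger) gradients of the cutoffs attached to the \emph{smaller} cubes, which is precisely why the cubes are taken pairwise disjoint and processed in order of decreasing size. Everything else is routine given Lemma \ref{lemgrow}, Corollary \ref{standardtest}, and the pointwise bound for $P$ in Lemma \ref{bound}(a).
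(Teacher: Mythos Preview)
Your proof follows the same strategy as the paper's: the first inequality is immediate, the second is a partition-of-unity argument feeding into Lemma~\ref{lemgrow} via Corollary~\ref{standardtest}, and the third is Frostman plus the kernel bound of Lemma~\ref{bound}(a). The paper simply invokes the Harvey--Polking lemma \cite[Lemma~3.1]{Harvey-Polking} for the partition of unity, while you construct it by hand; the Frostman part and the overall structure are essentially identical.

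There is, however, a flaw in your justification of the gradient bound $\|\nabla\varphi_i\|_\infty\lesssim\ell(Q_i)^{-1}$. The assertion that ``the annular regions $2Q_j\setminus Q_j$ of disjoint dyadic cubes have bounded overlap'' is false. In $\R$, take $x=1/3$ and for each $m\ge 2$ let $J_m$ be the sibling of the dyadic interval of length $2^{-m}$ containing $1/3$ (the other half of its dyadic parent). Using the binary expansion $1/3=0.0101\ldots_2$ one checks that $\dist(1/3,J_m)=2^{-m}/3<2^{-m-1}$, so $1/3\in 2J_m\setminus J_m$ for every $m$, yet the $J_m$'s are pairwise disjoint. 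Hence the number of nonzero terms in your Leibniz expansion at such a point is not bounded.

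The gradient bound is nevertheless true, for a slightly different reason. At any $x\in 2Q_i$ one has $|\nabla\varphi_i(x)|\le|\nabla\theta_i(x)|+\sum_{k<i}|\nabla\theta_k(x)|$. Each nonzero summand forces $x\in 2Q_k$; since the $Q_k$'s are disjoint dyadic cubes there are at most $3^{N+1}$ of them at each fixed scale, and since $k<i$ forces $\ell(Q_k)\ge\ell(Q_i)$,
\[
\sum_{k<i}|\nabla\theta_k(x)|\;\lesssim\!\!\sum_{m:\,2^{-m}\ge\ell(Q_i)}\!\!3^{N+1}\cdot 2^{m}\;\lesssim\;\ell(Q_i)^{-1}.
\]
So the number of terms may be unbounded, but their sum is a geometric series dominated by the smallest scale present. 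With this correction your argument goes through and reproduces what Harvey--Polking gives.
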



\begin{proof}
Just by the previous definitions, $\gamma_{\Theta,+}^{1/2}(E)\leq \gamma_\Theta^{1/2}(E)$. To prove $\gamma_\Theta^{1/2}(E) \lesssim \HH^{N}_{\infty}(E)$
 first notice that we can assume $E$ to be compact.
Let $\nu$
be a distribution supported on $E$ satisfying \eqref{eqsupgame2} 
and let $\{A_i\}_{i\in I}$ be a collection of sets in $\R^{N+1}$ which cover $E,$ and such that
$$\sum_{i\in I} \diam(A_i)^{N}\leq 2\,\HH^{N}_{\infty}(E).$$
For each $i\in I$, let $B_i$ be an open  ball centered in $A_i$ with $r(B_i)=\diam(A_i)$, so that $E\subset \bigcup_{i\in I} B_i$. By the compactness of $E$ we can assume $I$ to be finite.
By means  of the Harvey-Polking lemma \cite[Lemma 3.1]{Harvey-Polking}, we can construct ${\mathcal C}^\infty$ functions $\vphi_i$, $i\in I$, satisfying:
\begin{itemize}
\item $\supp\vphi_i\subset 2B_i$ for each $i \in I$,
\item $\|\nabla \vphi_i\|_\infty \lesssim 1/r(B_i)$, 
\item $\sum_{i\in I}\vphi_i = 1$ in $\bigcup_{i\in I} B_i$,
\end{itemize}
Hence, by corollary \ref{standardtest} and lemma \ref{lemgrow}
$$|\langle\nu,1\rangle| = \Big|\sum_{i\in I} \langle\nu,\vphi_i\rangle| \lesssim \sum_{i\in I}r(B_i)^{N}
= \sum_{i\in I}\diam(A_i)^{N}\lesssim \HH^{N}_{\infty}(E).$$
Since this holds for any distribution $\nu$ supported on $E$ satisfying \rf{eqsupgame2}, $\gamma_\Theta^{1/2}(E) \lesssim \HH^{N}_{\infty}(E)$.

\vv

To prove the second assertion in the lemma, let $E\subset\R^{N+1}$ be a Borel set satisfying 
$\dim_{H}(E)=s>N$. We may assume $E$ to be bounded and we can apply the well known Frostman lemma. 
Then, it follows that there exists some non-zero positive measure $\mu$ supported on $E$ satisfying
$\mu(B(\bar x,r))\leq r^s$ for all $\bar x\in\R^{N+1}$ and all $r>0$.

Thus, by Lemma \ref{bound}, we deduce that for all $\bar x\in\R^{N+1}$
$$|P* \mu(\bar x)|\lesssim \int \frac1{|\bar x-\bar y|^{N}} \,d\mu(\bar y)\lesssim \diam(E)^{s-N}
.$$
Therefore, 
$$\gamma_\Theta^{1/2}(E)\geq \frac{\mu(E)}{\|P*\mu\|_{L^\infty(\R^{N+1})}}>0.$$
\end{proof}

\vv
We say that a compact set $E\subset \R^{N+1}$ is removable for bounded $\frac 1 2$-fractional caloric functions (or $\frac12-$fractional caloric removable) if any bounded function $f:\R^{N+1}\to\R$ 
satisfying the $\frac12-$fractional heat equation in $\mathbb R^{N+1}\setminus E$, also satisfies the heat equation in the whole space $\R^{N+1}$. 


\vv
\begin{theorem}\label{teoremov}
A compact set $E\subset\R^{N+1}$ is  $\frac12-$fractional caloric removable if and only if $\gamma_\Theta^{1/2}(E)=0$.
\end{theorem}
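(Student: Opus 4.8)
The proof is a standard duality/removability argument, and splits into the two implications. The plan is to prove the contrapositive of each direction, so that the whole argument reduces to the equivalence: there exists a bounded function $f$ on $\R^{N+1}$, $\frac12$-fractional caloric off $E$ but not everywhere, if and only if there exists a nonzero distribution $\nu$ supported on $E$ with $\|P*\nu\|_\infty\le 1$ (after normalization, $\langle\nu,1\rangle\neq 0$, so $\gamma_\Theta^{1/2}(E)>0$).

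First I would treat the \emph{easy direction}: assume $\gamma_\Theta^{1/2}(E)>0$, so there is a distribution $\nu$ supported on $E$ with $\|P*\nu\|_\infty\le 1$ and $\langle\nu,1\rangle\neq 0$. Set $f=P*\nu$. Then $f$ is bounded by hypothesis, and since $P$ is the fundamental solution of $\Theta^{1/2}=(-\Delta)^{1/2}+\partial_t$, we have $\Theta^{1/2}f=\nu$ in the distributional sense, so $f$ solves the $\frac12$-fractional heat equation in $\R^{N+1}\setminus E$ (where $\nu$ vanishes). It remains to check $f$ does \emph{not} solve it on all of $\R^{N+1}$: if it did, then $\Theta^{1/2}f=0$ as a distribution, i.e. $\nu=0$, contradicting $\langle\nu,1\rangle\neq 0$. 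One should be slightly careful that "$f$ bounded and $\Theta^{1/2}f=0$ on $\R^{N+1}$ $\Rightarrow$ $f=0$" — but here we only need $\nu\neq 0$, which is immediate. Hence $E$ is not removable.

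The \emph{hard direction} is the main obstacle: assume $E$ is \emph{not} $\frac12$-fractional caloric removable and produce $\nu$. So there is a bounded $f$, $\frac12$-caloric on $\R^{N+1}\setminus E$, with $\nu:=\Theta^{1/2}f\neq 0$ as a distribution; clearly $\supp\nu\subset E$. The problem is that a priori $\nu$ need not be the one witnessing positivity of the capacity — we must show $\langle\nu,1\rangle$ can be taken nonzero, and that $\|P*\nu\|_\infty$ is controlled. The plan is: (i) use the localization Theorem \ref{teoloc} (equivalently Lemma \ref{lemgrow}) to show $\nu$ has $N$-growth, so that $P*\nu$ makes sense and, up to an entire additive term, $P*\nu = f + h$ with $h$ a bounded function which is $\frac12$-caloric on all of $\R^{N+1}$; concretely, fix a large cube $Q\supset E$ and a cutoff $\vphi$ equal to $1$ near $E$, write $f = P*(\vphi\nu) + (f - P*(\vphi\nu))$, note $\Theta^{1/2}(f-P*(\vphi\nu)) = \nu - \vphi\nu = 0$ everywhere, so $g:=f - P*(\vphi\nu)$ is an everywhere-$\frac12$-caloric distribution which is bounded away from $\supp\nu$, hence (being $\frac12$-caloric and of controlled growth, by hypoellipticity of $\Theta^{1/2}$ and the maximum principle for the fractional heat equation) bounded everywhere; thus $P*(\vphi\nu) = f - g$ is bounded and so, after normalizing, $\vphi\nu$ is an admissible competitor. (ii) If $\langle\vphi\nu,1\rangle=\langle\nu,1\rangle\neq 0$ we are done. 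Otherwise, we must modify $f$: since $\nu\neq 0$ there is a point $\bar x_0\notin E$ where we can subtract a suitable multiple of $P_{\bar x_0}$ (a translate of the kernel, which is bounded away from $\bar x_0$ and $\frac12$-caloric off $\{\bar x_0\}$) from $f$ to force the new distribution to have nonzero mass while keeping it supported in a compact neighborhood and bounded; a cleaner route, which I would prefer, is to translate/dilate: replace $f$ by $f_r(\bar x) = r^{N}\, (\text{difference quotient of } P*\nu \text{ against } \nu)$ — more precisely, observe that $\int P*\nu\,d\sigma \neq 0$ for some smooth bump $\sigma$, and $\langle \nu, 1\rangle$ is recovered as $\lim_{R\to\infty} R^{-1}\cdot(\text{flux of } P*\nu \text{ through } |t|=R)$ using the tail asymptotics $P(x,t)\sim t\,|x|^{-(N+1)}$ for large $|x|$, so $\langle\nu,1\rangle\neq 0$ automatically unless $P*\nu$ decays faster than the fundamental solution, which forces $\nu=0$.

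The \textbf{main obstacle} is step (i)–(ii) of the hard direction: one must show that an everywhere-$\frac12$-caloric bounded distribution (or one of $N$-growth) is identically zero, or reduces to a controlled term — i.e. a Liouville-type statement for $\Theta^{1/2}$. I would handle this via the explicit kernel: the fractional heat semigroup $e^{-t(-\Delta)^{1/2}}$ is given by convolution with $P(\cdot,t)\in L^1$ with $\|P(\cdot,t)\|_1$ constant in $t$, so a bounded $\frac12$-caloric function $g$ on $\R^{N+1}$ satisfies $g(\cdot,t) = P(\cdot,t-t_0)* g(\cdot,t_0)$ for $t>t_0$; letting $t_0\to-\infty$ and using that $P(\cdot,\tau)$ spreads out (its $L^\infty$ norm $\to 0$) forces $g$ to be constant in $x$ for each $t$, then $\partial_t g=0$ gives $g\equiv\text{const}$, which contributes nothing to $\nu$. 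Combining this Liouville statement with the localization theorem to get the $N$-growth of $\nu$ (and hence that $P*(\vphi\nu)$ is a genuine bounded function, not merely a distribution) is the technical heart; everything else is bookkeeping with cutoffs and the estimates of Lemma \ref{bound}.
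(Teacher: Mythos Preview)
Your easy direction is fine and matches the paper. The hard direction, however, has a genuine gap in step~(ii), and the whole Liouville detour in step~(i) is unnecessary once you see the paper's key trick.

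The gap: your proposed workarounds when $\langle\nu,1\rangle=0$ do not work. Subtracting a multiple of $P_{\bar x_0}$ with $\bar x_0\notin E$ produces a new distribution $\nu - c\delta_{\bar x_0}$ whose support is \emph{not} contained in $E$, so it is inadmissible for the capacity. The flux/decay claim (``$\langle\nu,1\rangle\neq 0$ unless $P*\nu$ decays faster than the fundamental solution, which forces $\nu=0$'') is simply false: any dipole-type distribution, e.g.\ $\nu=\delta_a-\delta_b$ with $a,b\in E$, has $\langle\nu,1\rangle=0$, is nonzero, and $P*\nu$ does decay one order faster than $P$. So nothing you propose rules out $\langle\nu,1\rangle=0$.

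The paper sidesteps this entirely with one line. Since $\nu=\Theta^{1/2}f\neq 0$ is a distribution, there exists a ${\mathcal C}^\infty$ function $\vphi$ supported on a cube $Q\supset E$ with $\langle\nu,\vphi\rangle>0$. Now use $\vphi\nu$ (not $\nu$) as the competitor: it is supported in $\supp\vphi\cap\supp\nu\subset E$, and
\[
\langle\vphi\nu,1\rangle=\langle\nu,\vphi\rangle>0.
\]
There is no need for $\vphi\equiv 1$ on $E$; the whole point is to choose $\vphi$ so that the pairing is nonzero. For the bound on $\|P*(\vphi\nu)\|_\infty$ the paper invokes the localization Theorem~\ref{teoloc} directly (applied to $\nu$, with $P*\nu$ bounded since $\nu=\Theta^{1/2}f$ and $f\in L^\infty$), which gives $\|P*(\vphi\nu)\|_\infty\lesssim 1$ without any Liouville theorem, any decomposition $f=P*(\vphi\nu)+g$, or any hypoellipticity statement. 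Everything in your steps (i)--(ii) beyond ``set $\nu=\Theta^{1/2}f$'' can be replaced by this two-line argument.
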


\begin{proof}
It is clear that if $E$ is $\frac12-$fractional caloric removable, then $\gamma_\Theta^{1/2}(E)=0$. Conversely, 
suppose that $E\subset\R^{N+1}$ is not $\frac12-$fractional caloric removable. So there exists some function
$f:\R^{N+1}\to\R$ satisfying
$$\| f\|_{L^\infty(\R^{N+1})}<\infty,$$
and $\Theta^{1/2} (f)\equiv0$ in $\R^{N+1}\setminus E$ but $\Theta^{1/2}(f)\not\equiv0$ in $\R^{N+1}$ (in the distributional
sense). Since $E$ is compact there exists some (open) cube such that $E \subset Q$  and $\Theta ^{1/2} (f)\not\equiv0$ in $Q$. 
 Consider the distribution $\nu=\Theta^{1/2}(f)$. Since $\nu$ does not vanish
identically in $Q$, there exists some ${\mathcal C}^\infty$ function $\vphi$ supported on $Q$ such that $\langle \nu,\vphi\rangle >0$. Now take $g=P*(\vphi\nu)$. By theorem \ref{teoloc},
$$\| g\|_{L^{\infty}(\R^{N+1})} <\infty,$$
and thus, since $\supp(\vphi\nu)\subset Q\cap E=E$,
$$\gamma_\Theta^{1/2}(E) \geq  \frac{\langle \vphi\,\nu,1\rangle}{\|g\|_{L^{\infty}(\R^{N+1})}}= 
\frac{\langle \nu,\,\vphi\rangle}{\|g\|_{L^{\infty}(\R^{N+1})}}
>0.$$
\end{proof}
\vv

From the preceding lemmas, it is clear that, for any compact set $E\subset\R^{N+1}$,
\begin{itemize} 
\item if $\dim_{H}(E)>N$, then $E$ is not removable for bounded $\frac{1}{2}-$fractional caloric functions,
\item if $\HH^{N}(E)=0$ (in particular if $\dim_{H}(E)<N$), then 
$E$ is removable for bounded $\frac{1}{2}-$fractional caloric functions.
\end{itemize}
Thus the critical Hausdorff dimension for $\frac{1}{2}-$fractional caloric removability (and for $\gamma_\Theta$) is $N$.

We consider the operator
$$T\nu = P * \nu,$$
defined over distributions $\nu$ in $\R^{N+1}$. When $\mu$ is a finite measure, one can easily check that $T\mu(\bar x)$ is defined for
$\mu$-a.e.\ $\bar x\in\R^{N+1}$ by the integral 
$$T\mu(\bar x) = \int P(\bar x-\bar y)\,d\mu(\bar y).$$
For $\ve>0$, we also consider the truncated operator
$$T_\ve\mu(\bar x) = \int_{|\bar x-\bar y|>\ve} P(\bar x- \bar y)\,d\mu(\bar y),
$$
whenever the integral makes sense,
and for a function $f\in L^1_{loc}(\mu)$, we write
$$T_{\mu} f\equiv T (f\,\mu),\qquad T_{\mu,\ve} f\equiv T_\ve (f\,\mu).$$
We also denote
$$T_*\mu(x) = \sup_{\ve>0} |T_\ve \mu(x)|,\quad  T_{*,\mu}f(x) = \sup_{\ve>0} |T_\ve(f\, \mu)(x)|.$$
We say that $T_\mu$ is bounded in $L^2(\mu)$ if the operators $T_{\mu,\ve}$ are bounded in $L^2(\mu)$
uniformly on $\ve>0$.

Given $E\subset \R^{N+1}$,
we define the capacity 
\begin{equation}\label{eqsupgame1'}
\wt\gamma_{\Theta,+}^{1/2}(E) =\sup\mu(E),
\end{equation}
where the supremum is taken over all positive measures $\mu$ supported on $E$ such that
\begin{equation}\label{eqsupgame2'}
\|T\mu\|_{L^\infty(\R^{N+1})} \leq 1,\qquad \|T^*\mu\|_{L^\infty(\R^{N+1})} \leq 1.
\end{equation}
Here $T^*$ is dual of $T$. That is,
$$T^*\mu(\bar x) = \int P(\bar y-\bar x)\,d\mu(\bar y).$$
Notice that by definition, $$\wt\gamma^{1/2}_{\Theta,+}(E)\lesssim \gamma^{1/2}_{\Theta,+}(E) $$

In the next theorem we characterize $\wt\gamma^{1/2}_{\Theta,+}(E)$ in terms of the positive measures supported on $E$ such that $T_\mu$ is bounded in $L^2(\mu)$.

\begin{theorem}
For any set $E\subset\R^{N+1}$,
$$
\wt\gamma^{1/2}_{\Theta,+}(E) \approx\sup\{\mu(E):\supp\mu\in E,\,\|T_\mu\|_{L^2(\mu)\to L^2(\mu)}\leq1\},
$$
with the implicit constant in the above estimate independent of $E$.
\end{theorem}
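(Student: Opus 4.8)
The plan is to prove the two inequalities $\gtrsim$ and $\lesssim$ separately, both through non-homogeneous Calder\'on--Zygmund theory; the point of working with $\wt\gamma^{1/2}_{\Theta,+}$ rather than $\gamma^{1/2}_{\Theta,+}$ is precisely that keeping \emph{both} $T\mu$ and $T^*\mu$ in $L^\infty$ makes a $T1$-type theorem applicable despite the non-antisymmetry of $P$. Throughout I would use that $P$ is a standard Calder\'on--Zygmund kernel of order $N$ with respect to $\dist_p$ (equivalently the Euclidean distance, since $s=\tfrac12$): the size estimate $|P(\bar x-\bar y)|\lesssim|\bar x-\bar y|^{-N}$ and the smoothness estimate $|P(\bar x-\bar y)-P(\bar x'-\bar y)|+|P(\bar x-\bar y)-P(\bar x-\bar y')|\lesssim|\bar x-\bar x'|\,|\bar x-\bar y|^{-N-1}$ for $|\bar x-\bar x'|,|\bar y-\bar y'|\le|\bar x-\bar y|/2$ are exactly Lemma \ref{bound}(a),(c), the second variable being handled as the first.

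For $\gtrsim$: let $\mu$ be a positive measure on $E$ with $\|T_\mu\|_{L^2(\mu)\to L^2(\mu)}\le 1$ (and, as usual in this class, with $N$-growth $\mu(B_p(\bar x,r))\le Cr^N$). Since the adjoint kernel of $P$ is $(\bar x,\bar y)\mapsto P(\bar y-\bar x)$, the operator $T^*_\mu$ is the $L^2(\mu)$-adjoint of $T_\mu$, hence also bounded on $L^2(\mu)$ with norm $\le 1$. By Cotlar's inequality for non-doubling measures---valid because $\mu$ has $N$-growth and $P$ obeys the estimates above---the maximal operators $T_{*,\mu}$ and $T^*_{*,\mu}$ are bounded on $L^2(\mu)$, and hence of weak type $(1,1)$ with respect to $\mu$. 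Choosing an absolute constant $\lambda$ large enough, the set $G=\{T_{*,\mu}1>\lambda\}\cup\{T^*_{*,\mu}1>\lambda\}$ has $\mu(G)\le\tfrac12\mu(E)$, so $F:=E\setminus G$ satisfies $\mu(F)\ge\tfrac12\mu(E)$. A by now standard argument (as for the analogous statement about analytic capacity) then gives $\|T(\mu|_F)\|_{L^\infty(\R^{N+1})}\lesssim\lambda$ and $\|T^*(\mu|_F)\|_{L^\infty(\R^{N+1})}\lesssim\lambda$: on $\supp(\mu|_F)$ this follows from $T_{*,\mu}1\le\lambda$ there plus a tail estimate for the removed part $\mu|_G$, and off $\supp(\mu|_F)$ one transfers the bound using the smoothness of $P$ and the $N$-growth of $\mu|_F$. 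A fixed multiple $c\,\mu|_F$ is then admissible for $\wt\gamma^{1/2}_{\Theta,+}(E)$, whence $\wt\gamma^{1/2}_{\Theta,+}(E)\ge c\,\mu(F)\gtrsim\mu(E)$; taking the supremum finishes this half.

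For $\lesssim$: let $\mu$ be a positive measure on $E$ with $\|T\mu\|_{L^\infty(\R^{N+1})}\le 1$ and $\|T^*\mu\|_{L^\infty(\R^{N+1})}\le 1$; it is enough to show $\|T_\mu\|_{L^2(\mu)\to L^2(\mu)}\lesssim 1$, since then a fixed multiple $c\,\mu$ competes in the right-hand side, which is therefore $\ge c\,\mu(E)$. First I would check that $\mu$ has $N$-growth: for an $s$-parabolic cube $Q$, take $\varphi_Q\in\CC^\infty$ with $\supp\varphi_Q\subset 2Q$, $\varphi_Q\equiv 1$ on $Q$, $\|\nabla\varphi_Q\|_\infty\lesssim\ell(Q)^{-1}$; by the localization Theorem \ref{teoloc}, $\|P*(\varphi_Q\mu)\|_\infty\lesssim 1$, and since $\varphi_Q$ is admissible for $2Q$ by Corollary \ref{standardtest}, Lemma \ref{lemgrow} gives $\mu(Q)\le\langle\varphi_Q\mu,\varphi_Q\rangle\lesssim\ell(Q)^N$. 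Then I would apply the $T1$ theorem for Calder\'on--Zygmund operators with respect to a non-doubling measure (Nazarov--Treil--Volberg): $\mu$ has $N$-growth, $P$ is a standard $N$-dimensional kernel by the first paragraph, $T\mu\in L^\infty(\R^{N+1})\subset\operatorname{BMO}(\mu)$, $T^*\mu\in L^\infty(\R^{N+1})\subset\operatorname{BMO}(\mu)$, and the weak boundedness property follows from the $N$-growth of $\mu$ and the smoothness of $P$; hence $T_\mu$ is bounded on $L^2(\mu)$ with norm $\lesssim 1$, as wanted.

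The main obstacle will be verifying that the two pieces of machinery really do apply to $P$. The non-antisymmetry of $P$ is exactly what blocks the usual $T1$/$Tb$ route for $\gamma^{1/2}_{\Theta,+}$, and it is the two-sided control ($T\mu$ \emph{and} $T^*\mu$ bounded) built into $\wt\gamma^{1/2}_{\Theta,+}$ that restores it; moreover passing from the one-sided bound $\|T\mu\|_\infty\le1$ to the $N$-growth of $\mu$ hinges on the localization Theorem \ref{teoloc}. A secondary, routine but slightly delicate point is the transference, in the $\gtrsim$ direction, of the $L^\infty(\mu)$ bound on the good piece $\mu|_F$ to an $L^\infty(\R^{N+1})$ bound, which relies on the $N$-growth of $\mu|_F$ together with the H\"older estimate of Lemma \ref{bound}(c).
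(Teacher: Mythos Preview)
Your overall strategy matches the paper's: apply a non-homogeneous $T1$ theorem to pass from the two-sided $L^\infty$ bound to $L^2(\mu)$-boundedness, and in the reverse direction extract from $L^2(\mu)$-boundedness a good measure admissible for $\wt\gamma^{1/2}_{\Theta,+}$. Two points deserve attention.

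\emph{The weak boundedness property.} Your assertion that WBP ``follows from the $N$-growth of $\mu$ and the smoothness of $P$'' is not correct as stated: for a positive kernel of order $N$ and a measure with $N$-growth, $\iint_{B\times B}|\bar x-\bar y|^{-N}\,d\mu\,d\mu$ is in general only $\lesssim \mu(B)\log(\ell(B)/\ell_{\min})$, which is not uniformly $\lesssim\mu(B)$. The paper closes this gap by using the extra information $\|T\mu\|_\infty\le 1$: via the localization Theorem~\ref{teoloc} one has $\|T_\mu\phi\|_\infty\lesssim 1$ for smooth bumps $\phi$ adapted to $2B$, and then for a ball $B$ with \emph{thin boundary} one estimates $|\langle T_\mu\chi_B,\chi_B\rangle|\le \int_B|T_\mu\phi|\,d\mu+\int_B|T_\mu(\phi-\chi_B)|\,d\mu$, handling the second term with the thin-boundary condition and the growth of $\mu$. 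The paper then invokes the $T1$ theorem in the form where WBP is only required on balls with thin boundaries (as in \cite{Hytonen-Martikainen}, cf.\ \cite[Th.~3.21]{Tolsa-llibre}). You should replace your one-line WBP claim by this argument.

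\emph{The reverse direction.} The paper does not restrict to a good set; instead it passes from $L^2(\mu)$-boundedness to weak $(1,1)$ bounds for $T$ and $T^*$ from $M(\R^{N+1})$ to $L^{1,\infty}(\mu)$, and then applies the Davie--{\O}ksendal dualization (as in \cite[Thm.~4.6, Lem.~4.7]{Tolsa-llibre}, \cite[Lem.~4.2]{Mattila-Paramonov}) to produce $h:E\to[0,1]$ with $\int h\,d\mu\gtrsim\mu(E)$ and $\|T(h\mu)\|_\infty,\|T^*(h\mu)\|_\infty\le 1$. Your restriction argument also works here, but the justification you give (``a tail estimate for the removed part $\mu|_G$'') is not quite the right one: on $F$ one has $0\le T_\ve(\mu|_F)\le T_\ve\mu\le T_{*}\mu\le\lambda$ \emph{because $P\ge 0$}, not because $\mu|_G$ lives in a tail. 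The Davie--{\O}ksendal route has the advantage of not relying on positivity of the kernel and of yielding a density $h$ rather than a restriction.
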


\begin{proof}
Denote
\begin{align*}
S& =\sup\{\mu(E):\supp\mu\in E,\,\|T_\mu\|_{L^2(\mu)\to L^2(\mu)}\leq1\}.
\end{align*}


The arguments to show that $\wt\gamma^{1/2}_{\Theta,+}(E)\approx S$ are standard. Indeed, let $\mu$ be a positive measure supported on $E$ such that
$\wt\gamma_{\Theta,+}(E)\leq 2\mu(E)$, $\|T\mu\|_{L^\infty(\R^{N+1})}\leq1$ and $\|T^*\mu\|_{L^\infty(\R^{N+1})}\leq1$. By a Cotlar type inequality analogous to the one in \cite[Lemma 5.4]{Mattila-Paramonov}, say, one deduces that
$$\|T_\ve\mu\|_{L^\infty(\mu)} \lesssim 1,\qquad \|T_\ve^*\mu\|_{L^\infty(\mu)} \lesssim 1,$$
uniformly on $\ve>0$.

 To obtain the boundedness of the operator $T_{\mu}$ in $L^2(\mu)$ we will use a  suitable $T1$ theorem with respect to a measure which may be non-doubling (see for example \cite[Th 3.21]{Tolsa-llibre}). Since $T_{\mu}$ and $T^*_{\mu}$ are bounded in $L^{\infty},$  as a consequence of the result in \cite{Hytonen-Martikainen}, 
to apply the $T1$ theorem in our case it is enough to check that the weak boundedness property 
is satisfied for balls with thin boundaries, that is,
$<T_{\mu}\chi_B, \chi_B> \le C\mu(2B)$, if $B$ is a  ball with thin boundary.
A  ball of radius $r(B)$ is said to have thin boundary if
\begin{equation}\label{thin}
\mu\{\bar x:\mbox{dist}(\bar x,\partial B)\le t r(B)\}\le t\mu(2B).
\end{equation}
Let's consider a ${\mathcal C}^{\infty}$ function $\phi$ with compact support in $2B$ such that $\phi\equiv1$ on $B.$ Then 
$$
 |<T_{\mu}\chi_B,\chi_B>|\le \int_B|T_{\mu}\phi|d\mu+\int_B|T_{\mu}(\phi-\chi_B)|d\mu.
$$
Using Theorem \ref{teoloc} one can see that the first term in the right hand side is bounded by $C\mu(B)$. 
To get a bound of the second term we will use that B has a thin boundary. Using the boundedness of $K$, property (a) in Lemma \ref{bound}, 
$$
\int_B|T_{\mu}(\phi-\chi_B)|d\mu\lesssim \int_{2B\setminus B}\int_B\frac{d\mu(\bar y)}{|\bar x-\bar y|^{N}}d\mu(\bar x)
\le \sum_{j\ge 0}\int_{\{\bar x\notin B:\tiny\mbox{dist}(\bar x;\partial B)\sim 2^{-j}r(B)\}}\int_B\frac{d\mu(\bar y)}{|\bar x-\bar y|^{N}}d\mu(\bar x).
$$ 
Given $j$  and $x\notin B$ such that $\mbox{dist}(\bar x,\partial B)\sim 2^{-j}r(B),$ since $\mu$ is a measure with $N$-growth, one has 
$$
\int_B\frac{d\mu(\bar y)}{|\bar x-\bar y|^{N}}\lesssim\sum_{k=-1}^{j}\int_{|\bar x-\bar y|\sim 2^{-k}r(B)} 
\frac{d\mu(\bar y)}{|\bar x-\bar y|^{N}}\lesssim \sum_{k=-1}^{j}\frac{\mu(B(\bar x,2^{-k}r(B))}{(2^{-k}r(B))^{N}}\lesssim j+2. 
$$
Therefore, by \ref{thin}
$$
\int_B|T_{\mu}(\phi-\chi_B)|d\mu\lesssim 
\sum_{j\ge 1}(j+2)\mu\{\bar x: \mbox{dist}(\bar x,\partial B)\sim 2^{-j}r(B)\}\lesssim \sum_{j\ge 0}\frac{j+2}{2^j}\mu(2B) \lesssim \mu(2B).
$$
Consequently,  the weak boundedness property is satified and by the $T1-$theorem it
 follows that $T_\mu$ is bounded in $L^2(\mu)$, with
$\|T_\mu\|_{L^2(\mu)\to L^2(\mu)}\lesssim1.$ So we deduce that $S\gtrsim \wt\gamma_{\Theta,+}(E)$.

To prove the converse estimate, let $\mu$ be a positive measure supported on $E$ be such that $\|T_\mu\|_{L^2(\mu)\to L^2(\mu)}\leq1$
and $S\leq 2\mu(E)$. From the $L^2(\mu)$ boundedness of $T_\mu$, one deduces that $T$ and $T^*$ are bounded
from the space of finite signed measures $M(\R^{N+1})$ to $L^{1,\infty}(\mu)$. That is, there exists some constant $C>0$ such that for any measure
$\nu\in M(\R^{N+1})$, any $\ve>0$, and any $\lambda>0$,
$$\mu\big(\big\{\bar x\in\R^{N+1}:|T_\ve \nu(\bar x)|>\lambda \big\}\big) \leq C\,\frac{\|\nu\|}\lambda,$$
and the same holds replacing $T_\ve$ by $T^*_\ve$.
The proof of this fact is analogous to the one of Theorem 2.16 in \cite{Tolsa-llibre}. 
Then, by a well known dualization of these estimates (essentially due to Davie and {\O}ksendal) and an application of Cotlar's inequality, one deduces
that there exists some function $h:E\to [0,1]$ such that
$$\mu(E)\leq C\,\int h\,d\mu,\quad \; \|T(h\,\mu)\|_{L^\infty(\R^{N+1})}\leq 1,\quad \; \|T^*(h\,\mu)\|_{L^\infty(\R^{N+1})}\leq 1.$$
See Theorem 4.6 and Lemma 4.7 from \cite{Tolsa-llibre} for the analogous arguments in the case of analytic capacity and also Lemma 4.2 from \cite{Mattila-Paramonov} for the precise vectorial version of the dualization of the weak $(1,1)$ estimates required in our situation, for example. So we have
$$\wt\gamma^{1/2}_{\Theta,+}(E)\geq \int h\,d\mu\approx \mu(E)\approx S.$$
\end{proof}




\section{The existence of removable sets with positive Hausdorff measure $\HH^{N}$}\label{section-removablesets}

In this section we will construct a self-similar Cantor set $E \subset \R^{N+1}$ with positive and finite $\HH^{N}$ measure which is $\frac{1}{2}-$caloric removable. To obtain this result we need the following theorem, showing that certain distributions are actually measures.

\begin{theorem}\label{teomeas}
Let $E\subset\R^{N+1}$ be a compact set with $\HH^{N}(E)<\infty$
and let $\nu$ be a distribution supported on $E$ such that 
$$\|P*\nu\|_\infty \leq 1.$$
Then $\nu$ is a signed measure, absolutely continuous with respect to $\HH^{N+1}|_E$ and 
there exists a Borel function $f:E\to\R$ such that $\nu=f\,\HH^{N}|_E$ and
$\|f\|_{L^\infty(\HH^{N}|_E)}\lesssim 1$.
\end{theorem}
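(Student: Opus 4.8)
The plan is to show first that $\nu$ is a measure of finite total variation by testing $\nu$ against arbitrary $\mathcal{C}^\infty$ functions $\vphi$ and bounding $|\langle\nu,\vphi\rangle|$ by $C\|\vphi\|_\infty$. The natural route is a Whitney-type decomposition of $\R^{N+1}\setminus E$ adapted to the $1/2$-parabolic (i.e.\ ordinary, since $s=1/2$) geometry: cover $\R^{N+1}\setminus E$ by Whitney cubes $\{Q_i\}$ with $\ell(Q_i)\approx\dist(Q_i,E)$, take a partition of unity $\{\vphi_i\}$ subordinate to a mild dilation of this family with $\|\nabla\vphi_i\|_\infty\lesssim\ell(Q_i)^{-1}$, and write $\vphi=\sum_i \vphi_i\vphi + (\text{part near }E)$. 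Each piece $\vphi_i\vphi$ is (a multiple of) a standard test function supported on a cube disjoint from $E$; since $\supp\nu\subset E$ one has $\langle\nu,\vphi_i\vphi\rangle=0$, so only the behaviour near $E$ matters. Using $\HH^N(E)<\infty$, for each $\delta>0$ one covers $E$ by balls $B_j=B(\bar x_j,r_j)$ with $\sum_j r_j^N\lesssim \HH^N(E)+\delta$; via the Harvey--Polking lemma (as used in the proof of the Lemma comparing $\gamma_\Theta^{1/2}$ with $\HH^N_\infty$) one builds $\mathcal{C}^\infty$ bumps $\psi_j$ with $\supp\psi_j\subset 2B_j$, $\|\nabla\psi_j\|_\infty\lesssim r_j^{-1}$, $\sum_j\psi_j=1$ on a neighbourhood of $E$. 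Then $\langle\nu,\vphi\rangle=\sum_j\langle\nu,\psi_j\vphi\rangle$, and since $\psi_j\vphi/\|\vphi\|_\infty$ satisfies $\|\nabla(\psi_j\vphi/\|\vphi\|_\infty)\|_\infty\lesssim r_j^{-1}$ on the cube $\lesssim B_j$, Corollary~\ref{standardtest} plus Lemma~\ref{lemgrow} give $|\langle\nu,\psi_j\vphi\rangle|\lesssim \|\vphi\|_\infty\, r_j^{N}$. Summing, $|\langle\nu,\vphi\rangle|\lesssim\|\vphi\|_\infty(\HH^N(E)+\delta)$, and letting $\delta\to0$ shows $\nu$ extends to a bounded functional on $\mathcal{C}_0$, i.e.\ $\nu$ is a finite signed measure with $\|\nu\|\lesssim\HH^N(E)$.

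The second step is to upgrade this to absolute continuity with respect to $\HH^N|_E$ together with the $L^\infty$ bound on the density. The idea is that the estimate above is in fact \emph{local}: for any ball $B=B(\bar x,r)$ one can run the same argument covering only $E\cap B$, obtaining
\begin{equation}\label{localgrowth}
|\nu|(B(\bar x,r))\lesssim \HH^N_\infty(E\cap B(\bar x,2r)).
\end{equation}
Indeed, given a compact $K\subset E\cap B(\bar x,r)$ and a bump $\vphi$ with $\vphi\equiv1$ on $K$, $\supp\vphi\subset B(\bar x, 2r)$, one wants $|\langle\nu,\vphi\rangle|\lesssim \HH^N_\infty(K)$; choosing $\vphi$ as $\sum_j\psi_j$ restricted suitably and applying Lemma~\ref{lemgrow} as before yields exactly this, and taking the supremum over $K$ gives \eqref{localgrowth}. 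From \eqref{localgrowth} and $\HH^N(E)<\infty$ one gets $|\nu|(B(\bar x,r))\lesssim r^N$ for all balls (crudely), hence $\nu\ll\HH^N$ by a standard comparison: if $\HH^N(A)=0$ then $A$ is covered by balls of total $N$-content as small as we like, so $|\nu|(A)=0$. Finally, to see $\nu=f\,\HH^N|_E$ with $\|f\|_\infty\lesssim1$, one differentiates: at $\HH^N|_E$-a.e.\ point, $\limsup_{r\to0} |\nu|(B(\bar x,r))/\HH^N_\infty(E\cap B(\bar x,2r))\lesssim1$ by \eqref{localgrowth}, while $\HH^N_\infty(E\cap B(\bar x,2r))\lesssim r^N$ and, for $\HH^N|_E$-a.e.\ $\bar x$ where the upper density of $\HH^N|_E$ is $\gtrsim1$ (true a.e.\ since $\HH^N(E)<\infty$ and $E$ carries positive finite measure on the support), $\HH^N|_E(B(\bar x,2r))\gtrsim \HH^N_\infty(E\cap B(\bar x,2r))$; combining, the Radon--Nikodym derivative $d|\nu|/d\HH^N|_E$ is bounded by an absolute constant $\HH^N|_E$-a.e., which is the claim.

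The main obstacle I expect is the passage from the crude growth bound $|\nu|(B)\lesssim r^N$ to the sharp \emph{localized} content bound \eqref{localgrowth} with the constant independent of $E$ and of the ball, and then extracting from it a genuinely bounded Radon--Nikodym density rather than merely absolute continuity. The delicate point is that $\HH^N_\infty(E\cap B(\bar x,2r))$ can be much smaller than $r^N$, so one really needs the Harvey--Polking construction to be carried out relative to an efficient covering of $E\cap B$ at every scale simultaneously, and one needs Lemma~\ref{lemgrow} applied to the bumps $\psi_j\vphi$ on their own (small) cubes, not on $B$. A secondary technical nuisance is the lack of antisymmetry of $P$ noted in the introduction, but it does not actually intervene here since the whole argument only uses the $N$-growth of $\nu$ coming from $\|P*\nu\|_\infty\le1$ (Lemma~\ref{lemgrow}), not any $L^2$ theory; the density comparison step is then a standard Besicovitch-type differentiation argument once \eqref{localgrowth} is in hand.
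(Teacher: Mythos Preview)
Your Step~1 (proving $\nu$ is a finite signed measure via Harvey--Polking bumps on an $\ve$-cover of $E$) is the same strategy as the paper's, but your normalisation is wrong: the product rule gives $\nabla(\psi_j\vphi)=\vphi\nabla\psi_j+\psi_j\nabla\vphi$, and after dividing by $\|\vphi\|_\infty$ the second term is bounded by $\|\nabla\vphi\|_\infty/\|\vphi\|_\infty$, which is \emph{not} $\lesssim r_j^{-1}$ in general. The paper fixes this by normalising instead by $\|\vphi\|_\infty+\ell(Q_i)\|\nabla\vphi\|_\infty$, so that $|\langle\nu,\psi_i\vphi\rangle|\lesssim \ell(Q_i)^N\big(\|\vphi\|_\infty+\ve\|\nabla\vphi\|_\infty\big)$; summing over $i$ and then letting $\ve\to 0$ kills the gradient contribution. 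You need this $\ve\to 0$ step, and because of it your localised bound \eqref{localgrowth} comes with $\HH^N(E\cap 2B)$, not $\HH^N_\infty(E\cap 2B)$: the Hausdorff content allows covers by large balls, whereas the argument forces $\ell(Q_i)\le\ve\to 0$.

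For Step~2 the paper takes a genuinely different route. Rather than a localised measure bound, it writes $\nu=g\,|\nu|$ with $|g|=1$ $|\nu|$-a.e., picks a $|\nu|$-Lebesgue point $\bar x$ of $g$, and shows directly that $|\nu|(B(\bar x,r))\lesssim r^N$ by testing against an admissible bump on a $|\nu|$-doubling ball $B(\bar x,r)$ (where $\int\vphi_{\bar x,r}\,d|\nu|\approx |\int\vphi_{\bar x,r}\,d\nu|\lesssim r^N$), and then handling non-doubling radii by passing to the nearest larger doubling scale. Your route can be made to work, but not quite as written: the direct Radon--Nikodym comparison $|\nu|(B)/\HH^N|_E(B)$ would require $\HH^N|_E$ to be doubling, which fails in general. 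Instead, from $|\nu|(B(\bar x,r))\lesssim\HH^N(E\cap B(\bar x,2r))$ you should invoke the upper density theorem ($\limsup_{r\to 0}\HH^N(E\cap B(\bar x,r))/r^N\le 2^N$ for $\HH^N$-a.e.\ $\bar x\in E$, hence $|\nu|$-a.e.\ since you have already shown $|\nu|\ll\HH^N|_E$) to get $\limsup_{r\to 0}|\nu|(B(\bar x,r))/r^N\lesssim 1$ at $|\nu|$-a.e.\ $\bar x$, and then apply Mattila's Theorem~6.9 (as the paper does) to conclude $|\nu|\lesssim\HH^N|_E$. The paper's doubling/polar-decomposition argument gets to the same density bound without ever needing the localised estimate \eqref{localgrowth} or the upper density of $\HH^N|_E$.
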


In what follows, we say that a distribution $\nu$ in $\R^{N+1}$ has $N-$dimensional growth, if for any cube $Q\subset\R^{N+1}$
and any ${\mathcal C}^1$ function $\vphi$ admissible for $Q$, 
$\displaystyle|\langle \nu,\vphi\rangle|\leq \ell(Q)^{N}.$

Theorem \ref{teomeas} is a consequence of Lemma \ref{lemgrow} and the following result.

\begin{lemma}
Let $E\subset\R^{N+1}$ be a compact set with $\HH^{N}(E)<\infty$.
Let $\nu$ be a distribution supported on $E$ wich has $N$-dimensional growth. Then $\nu$ is a signed measure, absolutely continuous with respect to $\HH^{N}|_E$ and 
there exists a Borel function $f:E\to\R$ such that $\nu=f\,\HH^{N}|_E$ and 
$\|f\|_{L^\infty(\HH^{N}|_E)}\lesssim 1$.
\end{lemma}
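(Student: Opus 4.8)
The plan is to show that a distribution $\nu$ with $N$-dimensional growth and compact support contained in a set $E$ with $\HH^N(E)<\infty$ acts boundedly on continuous functions, so that by Riesz representation it is a (signed) measure, and then to upgrade this to absolute continuity with respect to $\HH^N|_E$ with an $L^\infty$ density. First I would fix a continuous function $g$ supported in a large cube $Q_0\supset E$, and for each small scale $\delta>0$ choose an efficient covering of $E$ by cubes $\{Q_i\}$ with $\ell(Q_i)\approx\delta$ (or dyadic cubes of comparable sidelength) such that $\sum_i\ell(Q_i)^N\lesssim \HH^N_\infty(E)+1\lesssim\HH^N(E)+1$; this is possible because $\HH^N(E)<\infty$. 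Using the Harvey--Polking lemma (as already invoked in the proof of the Hausdorff content bound in Section~\ref{section-capacities}) one builds a smooth partition of unity $\{\vphi_i\}$ subordinate to $\{2Q_i\}$ with $\|\nabla\vphi_i\|_\infty\lesssim \ell(Q_i)^{-1}$ and $\sum_i\vphi_i\equiv1$ on a neighbourhood of $E$. By Corollary~\ref{standardtest} each $\vphi_i$ (and more generally $\vphi_i$ times a bounded Lipschitz factor, after rescaling) is admissible for $2Q_i$, so the $N$-dimensional growth hypothesis gives $|\langle\nu,\vphi_i\rangle|\lesssim\ell(Q_i)^N$.

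Next I would estimate $|\langle\nu,g\rangle|$. Writing $g=\sum_i g\,\vphi_i$ (valid since $\supp\nu\subset E$ and $\sum\vphi_i\equiv1$ near $E$), and replacing each $g$ on $\supp\vphi_i$ by its value $g(\bar x_i)$ at a point of that cube, the function $\vphi_i(g-g(\bar x_i))/(\,\|g\|_\infty + \omega_g(\ell(Q_i))\,)$ — where $\omega_g$ is the modulus of continuity of $g$ — is, up to a harmless constant, admissible for $2Q_i$; hence
\[
|\langle\nu,g\,\vphi_i\rangle|\ \le\ |g(\bar x_i)|\,|\langle\nu,\vphi_i\rangle| + \big(\|g\|_\infty+\omega_g(\ell(Q_i))\big)\,|\langle\nu,\vphi_i/(\cdots)\rangle|\ \lesssim\ \|g\|_\infty\,\ell(Q_i)^N .
\]
Summing, $|\langle\nu,g\rangle|\lesssim \|g\|_\infty\sum_i\ell(Q_i)^N\lesssim \|g\|_\infty(\HH^N(E)+1)$. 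Thus $\nu$ extends to a bounded linear functional on $C_c(\R^{N+1})$, so by the Riesz representation theorem $\nu$ is a signed Radon measure with $\|\nu\|\lesssim \HH^N(E)+1$; since $\supp\nu\subset E$, $\nu$ is finite.

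Finally, to get absolute continuity and the density bound, I would localize the same argument to an arbitrary ball $B=B(\bar x,r)$. Covering $E\cap B$ by cubes $\{Q_i\}$ of sidelength $\approx\delta\ll r$ with $\sum_i\ell(Q_i)^N\lesssim \HH^N_\infty(E\cap 2B)$, and applying the partition-of-unity estimate above to functions supported near $E\cap B$, one obtains $|\nu|(B)\lesssim \HH^N_\infty(E\cap 2B)$ (letting $\delta\to0$ along the covering). In particular, if $A\subset E$ has $\HH^N(A)=0$ then $\HH^N_\infty(A)=0$, and covering $A$ by finitely many small balls whose $\HH^N$-contents sum to $\le\eta$ gives $|\nu|(A)\lesssim\eta$ for every $\eta>0$, whence $|\nu|(A)=0$; so $\nu\ll\HH^N|_E$. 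Writing $\nu = f\,\HH^N|_E$ via Radon--Nikodym (legitimate since $\HH^N|_E$ is a finite Borel measure), the estimate $|\nu|(B(\bar x,r))\lesssim \HH^N_\infty(E\cap 2B(\bar x,r))\lesssim \HH^N\big(E\cap B(\bar x,2r)\big)$ together with a standard Lebesgue-differentiation / density argument for the measure $\HH^N|_E$ (using that $\HH^N|_E$ has finite upper $N$-density $\HH^N|_E$-a.e.\ since $\HH^N(E)<\infty$) yields $|f|\lesssim 1$ $\HH^N|_E$-a.e., i.e.\ $\|f\|_{L^\infty(\HH^N|_E)}\lesssim 1$.

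The main obstacle I expect is the last step: comparing $|\nu|(B)$ with $\HH^N\big(E\cap cB\big)$ cleanly enough to extract a pointwise $L^\infty$ bound on the Radon--Nikodym derivative. One must be careful that $\HH^N_\infty(E\cap 2B)$ can be bounded by $\HH^N(E\cap cB)$ (true with a dimensional constant) and that the differentiation theorem applies to the non-doubling measure $\HH^N|_E$ — here one invokes that for $\HH^N|_E$-a.e.\ point the upper $N$-density is finite and bounded below by a constant, a standard fact for sets of finite $\HH^N$ measure, which converts the ball estimate $|\nu|(B)\lesssim \HH^N(E\cap cB)$ into $|f(\bar x)|\lesssim 1$ a.e. The first two steps (Riesz representation and $\nu\ll\HH^N|_E$) are routine given Corollary~\ref{standardtest} and the Harvey--Polking construction already used in the paper.
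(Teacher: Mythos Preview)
Your first step (Riesz representation) is essentially the paper's argument, with one correction: you should normalize $\vphi_i(g-g(\bar x_i))$ by $\|g\|_\infty+\ell(Q_i)\|\nabla g\|_\infty$ rather than by $\|g\|_\infty+\omega_g(\ell(Q_i))$, since admissibility requires a gradient bound and the product rule produces a term $\|\vphi_i\|_\infty\|\nabla g\|_\infty$ not controlled by $\omega_g$ alone. Also, letting the sidelengths tend to $0$ yields a bound in terms of $\HH^N$, not $\HH^N_\infty$ (because $\HH^N_\delta\uparrow\HH^N$); this is harmless since you only use the estimate through $\HH^N$.

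For the density bound the two routes genuinely diverge. The paper writes $\nu=g\,|\nu|$ with $|g|=1$ and argues at a $|\nu|$-Lebesgue point of $g$: on a $|\nu|$-doubling ball $B(\bar x,r)$ the sign is essentially constant, so $\int\vphi_{\bar x,r}\,d|\nu|\lesssim |\int\vphi_{\bar x,r}\,d\nu|\lesssim r^N$; non-doubling scales are handled by chaining to the nearest doubling one. This gives $\limsup_{r\to0}|\nu|(B(\bar x,r))/r^N\lesssim 1$ for $|\nu|$-a.e.\ $\bar x$, and Mattila's Theorem~6.9 then yields $|\nu|=\tilde f\,\HH^N|_E$ with $\tilde f\lesssim 1$. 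Your approach instead localizes the step-1 estimate to obtain $|\nu|(B)\lesssim\HH^N(E\cap\overline{B})$ for every ball, from which absolute continuity is immediate, and then combines Besicovitch differentiation with the standard bounds $2^{-N}\le\Theta^{*N}(\HH^N|_E,\bar x)\le 1$ for $\HH^N$-a.e.\ $\bar x\in E$: along a sequence $r_k\to0$ realizing the upper density one has $\HH^N(E\cap B(\bar x,r_k))\approx r_k^N$ and $|\nu|(B(\bar x,r_k))\lesssim r_k^N$, so the Radon--Nikodym quotient is bounded along that sequence, and since the Besicovitch limit exists a.e.\ this gives $|f|\lesssim 1$. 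Your route is more direct and avoids the sign-function/doubling machinery; the paper's route is more self-contained in that it establishes the upper-density bound on $|\nu|$ without invoking the density theorems for $\HH^N|_E$.
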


\begin{proof}
First we will show that $\nu$ is a signed measure. By the Riesz representation theorem, it is enough
to show that, for any ${\mathcal C}^\infty$ function $\psi$ with compact support,
\begin{equation}\label{eq001}
|\langle \nu,\psi\rangle|\leq C(E)\,\|\psi\|_\infty,
\end{equation}
where $C(E)$ is some constant depending on $E$.

To prove \rf{eq001}, we fix $\ve>0$ and we consider a family of open cubes $Q_i$, 
$i\in I_\ve$, such that
\begin{itemize}
\item $E\subset \bigcup_{i\in I_\ve} Q_i$,
\item $\ell(Q_i)\leq \ve$ for all $i\in I_\ve$, and
\item $\sum_{i\in I_\ve} \ell(Q_i)^{N} \leq C\,\HH^{N}(E) + \ve$.
\end{itemize}
Since $E$ is compact, we can assume that $I_\ve$ is finite. 
By standard arguments, we can find a family of non-negative functions $\vphi_i$, $i\in I_\ve$, such that
\begin{itemize}
\item each $\vphi_i$ is supported on $2Q_i$ and $c\vphi_i$ is admissible for $2Q_i$, for some
absolute constant $c>0$, 
\item $\sum_{i\in I_\ve} \vphi_i = 1$ on $\bigcup_{i\in I_\ve} Q_i$, and in particular on $E$.
\end{itemize}
Indeed, to construct the family of functions $\vphi_i$ we can cover each cube $Q_i$ by a bounded number (depending on $n$)  
dyadic cubes $R_i^1,\cdots,R_i^m$ with side length $\ell(R_i^j)\leq \ell(Q_i)/8$ and then apply
the usual Harvey-Polking lemma (\cite[Lemma 3.1]{Harvey-Polking} to the family of cubes $\{R_{i,j}\}$.

We write
$$|\langle \nu,\psi\rangle|\leq \sum_{i\in I_\ve} |\langle \nu,\vphi_i\psi\rangle|.$$
For each $i\in I_\ve$, consider the function
$$\eta_i= \frac{\vphi_i\,\psi}{\|\psi\|_\infty + \ell(Q_i)\,\|\nabla \psi\|_\infty },$$
where $\nabla=(\nabla_x, \partial_t).$
We claim that $c\,\eta_i$ is admissible for $2Q_i$, for some absolute constant $c>0$. To check this,
just note that $\vphi_i\,\psi$ is supported on $2Q_i$ and satisfies
$$\|\nabla (\vphi_i\,\psi)\|_\infty \leq \|\nabla \vphi_i\|_\infty\,\|\psi\|_\infty
+ \| \vphi_i\|_\infty\,\|\nabla\psi\|_\infty \lesssim \frac1{\ell(Q_i)}\,\|\psi\|_\infty
+ \|\nabla\psi\|_\infty.$$
Hence, 
$$\|\nabla \eta_i\|_\infty \lesssim \frac1{\ell(Q_i)}.$$
So, by corollary \ref{standardtest}, the claim above holds and, consequently, by the assumptions in the lemma,
$$|\langle \nu,\eta_i\rangle|\lesssim \ell(Q_i)^{N}.$$

From the preceding estimate, we deduce that
$$|\langle \nu,\psi\rangle|\leq \sum_{i\in I_\ve} |\langle \nu,\vphi_i\psi\rangle|
\lesssim \sum_{i\in I_\ve} \ell(Q_i)^{N} \,\big(
\|\psi\|_\infty + \ell(Q_i)\,\|\nabla \psi\|_\infty \big).$$
Since $\ell(Q_i)\leq\ve$ for each $i$, we obtain that 
\begin{align*}
|\langle \nu,\psi\rangle| &
\lesssim \sum_{i\in I_\ve} \ell(Q_i)^{N} \,\big(
\|\psi\|_\infty + \ve\,\|\nabla \psi\|_\infty \big)\\
&\lesssim \big(\HH^{N}(E) + \ve\big)\,
\big(
\|\psi\|_\infty + \ve\,\|\nabla \psi\|_\infty \big).
\end{align*}
Letting $\ve\to0$, we get
$$|\langle \nu,\psi\rangle|\lesssim \HH^{N}(E)\,\|\psi\|_\infty,$$
which gives \rf{eq001} and proves that $\nu$ is a finite signed measure, as wished.

It remains to show that
there exists some Borel function $f:E\to\R$ such that $\nu=f\,\HH^{N}|_E$, with 
$\|f\|_{L^\infty(\HH^{N}|_E)}\lesssim1$.
To this end, let $g$ be the density of $\nu$ with respect to its variation $|\nu|$, so that
$\nu = g\,|\nu|$ with $g(\bar x)=\pm1$ for $|\nu|$-a.e.\ $\bar x\in\R^{N+1}$.
We will show that
\begin{equation}\label{eqdens92}
\limsup_{r\to0}\frac{|\nu|(B(\bar x,r))}{r^{N}}\lesssim1 \quad\mbox{ for $|\nu|$-a.e.\ $\bar x\in\R^{N+1}$}.
\end{equation}
This implies that $|\nu|=\wt f\,\HH^{N}|_E$ for some non-negative function $\wt f\lesssim1$. This fact is well known (see Theorem 6.9 from \cite{Mattila-gmt}).  

So to complete the proof of the lemma it suffices to show \rf{eqdens92} (since then we will have
$\nu= g\,\wt f\,\HH^{N}|_E$ with $|g\wt f|\lesssim1$). Notice that, by the Lebesgue differentiation theorem, 
$$\lim_{r\to 0}\frac1{|\nu|(B(\bar x,r))} \int_{B(\bar x,r)}|g(\bar y)-g(\bar x)|\,d|\nu|(\bar y) =0
\quad\mbox{ for $|\nu|$-a.e.\ $\bar x\in\R^{N+1}$.}$$
Let $\bar x\in E$ be a Lebesgue point for $|\nu|$ with $|g(\bar x)|=1$,  let $\ve>0$ to be chosen below,
 and let $r_0>0$ be small enough such that, for $0<r\leq r_0$,
$$\frac1{|\nu|(B(\bar x,r))} \int_{B(\bar x,r)}|g(\bar y)-g(\bar x)|\,d|\nu|(\bar y)<\ve.$$
Suppose first that
\begin{equation}\label{eqdoub*}
|\nu|(B(\bar x,2r))\leq 2^{N+3} \,|\nu|(B(\bar x,r)),
\end{equation}
and let $\vphi_{\bar x,r}$ be some non-negative ${\mathcal C}^\infty$ function supported on $B(\bar x,2r)$ which equals $1$ on 
$B(\bar x,r)$ such that $c\,\vphi_{\bar x,r}$ is admissible for the smallest  cube $Q$ containing $B(\bar x,2r)$,
so that
$$\left| \int \vphi_{\bar x,r}\,d\nu\right|\lesssim r^{N}.$$
Now observe that
\begin{align*}
\left|\int \vphi_{\bar x,r}\,d\nu - g(\bar x)\!\int \vphi_{\bar x,r}\,d|\nu| \right|
& = \left|\int \vphi_{\bar x,r}(\bar y) (g(\bar y)  - g(\bar x))\,d|\nu|(\bar y) \right| \lesssim \int_{B(\bar x,2r)} |g(\bar y)  - g(\bar x)|\,d|\nu|(\bar y) \\
& \leq \ve\,|\nu|(B(\bar x,2r))\leq \ve\,2^{N+3}\,|\nu|(B(\bar x,r)) \lesssim \ve \int \vphi_{\bar x,r}\,d|\nu|.
\end{align*}
Thus, if $\ve$ is chosen small enough, we deduce that
$$ \int \vphi_{\bar x,r}\,d|\nu| = |g(\bar x)|\int \vphi_{\bar x,r}\,d|\nu| \leq 2
\left|\int \vphi_{\bar x,r}\,d\nu\right|\lesssim r^{N}.$$
Therefore, using again that $\vphi_{\bar x,r}=1$ on $B(\bar x,r)$, we get
\begin{equation}\label{eqgro956}
|\nu|(B(\bar x,r))\lesssim r^{N}.
\end{equation}

To get rid of the doubling assumption \rf{eqdoub*}, notice that for $|\nu|$-a.e.\ $\bar x\in\R^{N+1}$ there
exists a sequence of balls $B(\bar x,r_k)$, with $r_k\to0$, satisfying \rf{eqdoub*} (we say that the balls
$B(\bar x,r_k)$ are $|\nu|$-doubling). Further, we may assume
that $r_k=2^{h_k}$, for some $h_k\in\N$. The proof of this fact is analogous to the one of Lemma 2.8
in \cite{Tolsa-llibre}. So for such a point $\bar x$, by the arguments above, we know that there exists
some $k_0>0$ such that
$$|\nu|(B(\bar x,r_k))\lesssim r_k^{N}\quad\mbox{ for $k\geq k_0$,}$$
assuming also that $\bar x$ is a $|\nu|$-Lebesgue point for the density $g$. Given an arbitrary $r\in (0,r_{k_0})$,
let $j$ be the smallest integer $r\leq 2^j$, and let $2^k$ be the smallest $j\leq k$ such that
the ball $B(\bar x,2^k)$ is $|\nu|$-doubling (i.e., \rf{eqdoub*} holds for this ball). Observe that
$2^k\leq r_{k_0}$.
Then, taking into account that the balls $B(\bar x,2^h)$ are non-doubling for $j\leq h<k$ and applying \rf{eqgro956} for
$r=2^k$, we obtain
$$|\nu|(B(\bar x,r))\leq |\nu|(B(\bar x,2^j)) \leq 2^{(n+3)(j-k)}\,|\nu|(B(\bar x,2^k)) \lesssim 2^{(n+3)(j-k)}\,2^{kn}
\leq 2^{jn}\approx r^{N}.$$
Hence, \rf{eqdens92} holds and we are done.
\end{proof}

\vv

Next we will construct a self-similar Cantor set $E\subset \R^{N+1}$ with positive and finite measure $\HH^N$ which is removable for the $\frac{1}{2}-$fractional heat equation.  Our example is inspired by the typical planar $1/4$ Cantor
set in the setting of analytic capacity (see \cite{Garnett} or \cite[p.\ 35]{Tolsa-llibre}, for example).

We construct the Cantor set $E$ as follows:
we let $E_0$ be the unit cube, i.e. $E_0=Q^0=[0,1]^{N+1}$. Next we replace $Q^0$ by $2^{N+1}$ disjoint closed cubes $Q^1_i$ with side length $2^{\frac{-(N+1)}{N}}$ such that  they are all contained in $Q^0$ and each one contains a vertex of $Q^0$. 

We proceed inductively:
In each generation $k$, we replace each  cube $Q_j^{k-1}$ of the previous generation by $2^{N+1}$  cubes $Q_i^{k}$ with side length $2^{(-k)\frac{N+1}{N}}$ which are contained in $Q_j^{k-1}$ and located in the same relative position to $Q_j^{k-1}$ as the cubes $Q_1^1,\ldots,Q^1_{2^{N+1}}$ with respect to $Q_0$. 

Notice that in each generation $k$ there are $2^{k(N+1)}$ closed  cubes with side length $2^{\frac{-(N+1)}{N}k}$. 
We denote by $E_k$ the union of all these  cubes from the $k$-th generation. By construction, $E_k\subset E_{k-1}$.
We let 
\begin{equation}\label{defE}
E=\bigcap_{k=0}^\infty E_k.
\end{equation}
It is easy to check that 
$\dist(Q_i^k,Q_h^k)\ge C(N) 2^{\frac{-(N+1)}{N}k}$ for $i\neq h$, and if $Q_i^k$, $Q_h^k$ are contained in the same  cube $Q^{k-1}_j$, then $\dist(Q_i^k,Q_h^k)\lesssim 2^{\frac{-(N+1)}{N}k}$.
Taking into account that, for each $k\geq0$,
$$\sum_{i=1}^{2^{k(N+1)}} \ell(Q_i^k)^{N} = 2^{k(N+1)}\cdot 2^{-k(N+1)}=1,$$
by standard arguments it follows that
$$0<\HH^N(E)<\infty.$$
Further, $\HH^N|_E$ coincides, modulo a constant factor, with the probability measure $\mu$ supported on $E$ which gives the same measure to all the cubes $Q_i^k$ of the same generation $k$, that is $\mu(Q_i^k)=2^{-k(N+1)}$.

\vv

\begin{theorem}
The Cantor set $E$ defined in \rf{defE} is removable for $\frac{1}{2}-$ fractional caloric functions.
\end{theorem}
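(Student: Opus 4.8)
The plan is to show, equivalently by Theorem \ref{teoremov}, that $\gamma_\Theta^{1/2}(E)=0$; that is, that every distribution $\nu$ supported on $E$ with $\|P*\nu\|_{L^\infty(\R^{N+1})}<\infty$ satisfies $\langle\nu,1\rangle=0$. The first step is to invoke Theorem \ref{teomeas}: since $0<\HH^N(E)<\infty$, such a $\nu$ must be a finite signed measure of the form $\nu=b\,\mu$, with $\mu$ the self-similar probability measure on $E$ described above (the one giving mass $2^{-k(N+1)}$ to every cube $Q_i^k$) and $\|b\|_{L^\infty(\mu)}\lesssim\|P*\nu\|_{L^\infty}$. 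After normalizing we may assume $\|P*\nu\|_{L^\infty}\le1$ and $\|b\|_{L^\infty(\mu)}\le C_0$ for a dimensional constant $C_0$, and we must prove $\int b\,d\mu=0$.

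Next I would exploit the self-similarity of $E$ together with the localization Theorem \ref{teoloc}. For a cube $Q=Q_i^k$ of the construction let $A_Q$ be the homothety of ratio $\ell(Q)=2^{-k(N+1)/N}$ mapping $E$ onto $E\cap Q$; then $\mu|_Q=\ell(Q)^N(A_Q)_*\mu$. Since $s=\tfrac12$ the $s$-parabolic dilations are isotropic and $P$ is homogeneous of degree $-N$, so writing $\nu_Q:=\nu|_Q$ and $\beta_Q:=b\circ A_Q$ a direct change of variables gives $(P*\nu_Q)\circ A_Q=P*(\beta_Q\,\mu)$, and hence $\|P*(\beta_Q\,\mu)\|_{L^\infty(\R^{N+1})}=\|P*\nu_Q\|_{L^\infty(\R^{N+1})}$. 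Because distinct cubes of a fixed generation $k$ are at mutual distance $\gtrsim\ell(Q_i^k)$ (as noted in the construction), one can pick ${\mathcal C}^1$ cut-offs $\vphi_i$ with pairwise disjoint supports, equal to $1$ on $Q_i^k$ and with $\|\nabla\vphi_i\|_\infty\lesssim\ell(Q_i^k)^{-1}$; then $\vphi_i\nu=\nu_{Q_i^k}$, so Corollary \ref{standardtest} and Theorem \ref{teoloc} yield $\|P*\nu_{Q_i^k}\|_{L^\infty}\lesssim1$. Thus each $\beta_Q\,\mu$ is, up to an absolute multiplicative constant, a competitor for $\gamma_\Theta^{1/2}(E)$, and, setting $a_Q:=\nu(Q)=\ell(Q)^N\langle\beta_Q\mu,1\rangle$, we obtain $|a_Q|\lesssim\ell(Q)^N\,\gamma_\Theta^{1/2}(E)$ for every cube $Q$ of the construction.

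Summing this over one generation only gives $\gamma_\Theta^{1/2}(E)\lesssim\gamma_\Theta^{1/2}(E)$, which is empty; the real content — and the step I expect to be the main obstacle — is to turn the fact that $\sum_i P*\nu_{Q_i^k}=P*\nu$ is \emph{globally} bounded by $1$, a far stronger statement than the individual bounds on the $P*\nu_{Q_i^k}$, into genuine decay. The mechanism is the following: for $Q'$ of generation $k+1$ contained in its parent $Q$ of generation $k$, write
$$P*\nu_{Q'}=P*\nu-P*(\nu_Q-\nu_{Q'})-P*(\nu-\nu_Q),$$
and use Lemma \ref{bound}(b),(c) to see that on $Q'$ the last two terms are essentially constant (their gradients are $\lesssim C_0/\ell(Q)$, since the corresponding measures lie at distance $\gtrsim\ell(Q)$ from $Q'$), so each differs on $Q'$ from a constant by $O(C_0\,\ell(Q')/\ell(Q))$, with $\ell(Q')/\ell(Q)=2^{-(N+1)/N}<\tfrac12$ bounded away from $1$. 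Transporting this to $E$ via $A_{Q'}$ shows that $P*(\beta_{Q'}\mu)$ equals, up to an additive constant and a controlled error, the rescaled restriction of $P*\nu$ to $Q'$; iterating over all generations yields a recursion for the renormalized masses $m_Q:=\langle\beta_Q\mu,1\rangle=\ell(Q)^{-N}a_Q$ in which these constants must be tracked. The heart of the proof is to resolve this recursion: using that the localization forces all the $P*(\beta_Q\mu)$ to be uniformly bounded — which in turn constrains the accumulated constants — together with the lacunary scaling and the separation of the cubes of the corner Cantor set, one should arrive at $|\langle\nu,1\rangle|=\bigl|\sum_i a_{Q_i^k}\bigr|\to0$ as $k\to\infty$, hence $\langle\nu,1\rangle=0$. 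This then gives $\gamma_\Theta^{1/2}(E)=0$, and by Theorem \ref{teoremov} that $E$ is removable for $\tfrac12$-fractional caloric functions. Throughout, the scheme parallels the treatment of removable Cantor sets for the classical heat equation in \cite{mateuprattolsa}.
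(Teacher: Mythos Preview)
Your reduction via Theorems \ref{teoremov} and \ref{teomeas} to $\nu=b\,\mu$ with $\|b\|_{L^\infty(\mu)}\lesssim1$ matches the paper exactly, and the use of Theorem \ref{teoloc} to localize to the cubes $Q_i^k$ is also correct. But from that point on your scheme diverges from the paper and, as you yourself acknowledge, the recursion is only sketched (``one should arrive at\ldots''). The constants you subtract---the values of $P*(\nu-\nu_{Q})$ and $P*(\nu_Q-\nu_{Q'})$ on $Q'$---are indeed approximately constant on $Q'$, but nothing in your setup forces these constants to decay or cancel across generations; they can be of order~$1$ at every step, and the rescaling sends them to order~$1$ constants again. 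Without an additional mechanism the recursion does not close.

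The paper's argument is different and considerably more direct: it exploits the \emph{positivity} of the kernel $P$. After writing $\nu=f\mu$, one picks a $\mu$-Lebesgue point $\bar x_0$ with $f(\bar x_0)>0$, then a cube $Q_i^k$ on which $\nu(Q_j^h)\approx f(\bar x_0)\,\mu(Q_j^h)$ for all subcubes down to generation $k+m$. One then evaluates the truncated potential at the \emph{upper} (maximal-time) corner $\bar z$ of $Q_i^k$. Since $P(\bar z-\bar y)\geq0$ for every $\bar y\in Q_i^k$, the contributions from the nested annuli $Q^h_{\bar z}\setminus Q^{h+1}_{\bar z}$, $k\le h<k+m$, are all nonnegative, and each is $\gtrsim f(\bar x_0)$ because every such annulus contains a sibling cube sitting at time $\approx\ell(Q_j^{h+1})$ below $\bar z$ and at distance $\approx\ell(Q_j^{h+1})$ from it. Summing over $m$ scales gives $\wt T_*\nu(\bar z)\gtrsim m\,f(\bar x_0)$, contradicting the uniform maximal bound coming from $\|P*\nu\|_\infty\le1$ and a Cotlar-type inequality. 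The key idea you are missing is this exploitation of the sign of $P$ at a corner of maximal time; it is what replaces the Menger-curvature or rescaling arguments used for antisymmetric kernels, and it does not appear anywhere in your outline.
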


\begin{proof}
We will suppose that $E$ is not removable and we will reach a contradiction. 
By Theorem \ref{teoremov}, there exists a distribution $\nu$ supported on $E$ such that 
$|\langle\nu,1\rangle|>0$ and
$$
\|P*\nu\|_{L^\infty(\R^{N+1})} \leq 1.
$$
By Theorem \ref{teomeas}, $\nu$ is a signed measure of the form
$$\nu=f\,\mu,\qquad \|f\|_{L^\infty(\mu)}\lesssim 1,$$
where $\mu$ is the probability measure supported on $E$ such that
$\mu(Q_i^k)=2^{-k(N+1)}$ for all $i,k$.
It is easy to check that $\mu$ (and thus $|\nu|$) has upper growth of degree $N$.  Recall that $T$ denotes the operator $T\nu=P*\nu$, defined over distributions $\nu$ in $\R^{N+1}$. Then, 
arguing as in \cite[Lemma 5.4]{Mattila-Paramonov}, it follows that there exists some constant $K$ such that
\begin{equation}\label{eqmatpar}
T_*\nu(\bar x)\leq K\,\quad \mbox{ for all $\bar x\in\R^{N+1}$.}
\end{equation}

For $\bar x\in E$, let $Q_{\bar x}^k$ the cube $Q_i^k$ that contains $\bar x$. Then we consider the auxiliary operator
$$\wt T_*\nu(\bar x) = \sup_{k\geq0} |T (\chi_{\R^{N+1}\setminus Q_{\bar x}^k}\nu)(\bar x)|.$$
By the separation condition between the cubes $Q_i^k$, the upper growth of $|\nu|$, and the condition \rf{eqmatpar}, it follows easily that
\begin{equation}\label{eqmatpar'}
\wt T_*\nu(\bar x)\leq K'\,\quad \mbox{ for all $\bar x\in E$,}
\end{equation}
for some fixed constant $K'$.

We will contradict the last estimate. To this end, consider a Lebesgue point $\bar x_0\in E$ (with respect to $\mu$) of the density $f=\frac{d\nu}{d\mu}$ such that $f(\bar x_0)>0$. The existence of this point is
guarantied by the fact that $\nu(E)>0$.
Given $\ve>0$ small enough to be chosen below, consider a cube $Q_i^k$ containing $\bar x_0$ such that
$$\frac1{\mu(Q_i^k)} \int_{Q_i^k} |f(\bar y)- f(\bar x_0)|\,d\mu(y)\leq \ve.$$
Given $m\gg1$, to be fixed below too, it is easy to check that if $\ve$ is chosen small enough (depending on $m$ and on $f(\bar x_0)$), then
the above condition ensures that
every cube $Q_j^h$ contained in $Q_i^k$ such that $k\leq h\leq k+m$ satisfies
\begin{equation}\label{eqmatp*}
\frac12 f(\bar x_0)\,\mu(Q^h_j)\leq
\nu(Q^h_j)\leq 2 f(\bar x_0)\,\mu(Q^h_j).
\end{equation}
Notice also that, writing $\nu=\nu^+-\nu^-$, since $f(\bar x_0)>0$,
$$\nu^-(Q_i^k) = \int_{Q_i^k} f^-(\bar y)\,d\mu(\bar y) \leq \int_{Q_i^k} |f(\bar y)-f(\bar x_0)|\,d\mu(\bar y) \leq \ve\,\mu(Q_i^k).$$

Let $\bar z= (z,u)$ be one of the upper corners of $Q^k_i$ (i.e.,   $u$ maximal in $Q^k_i$). Since
$|T(\chi_{Q^k_{\bar z}\setminus Q^{k+m}_{\bar z}}\nu(\bar z)| \leq 2\,\wt T_*\nu(\bar z)$, we have
$$\wt T_*\nu(\bar z) \geq \frac12\,|T(\chi_{Q^k_{\bar z}\setminus Q^{k+m}_{\bar z}}\nu)(\bar z)| \geq 
\frac12\,|T(\chi_{Q^k_{\bar z}\setminus Q^{k+m}_{\bar z}}\nu^+)(\bar z)| - \frac12\,|T(\chi_{Q^k_{\bar z}\setminus Q^{k+m}_{\bar z}}\nu^-)(\bar z)|.
$$
Using the fact that $\dist(\bar z,\,Q^k_{\bar z}\setminus Q^{k+m}_{\bar z}) \gtrsim \ell(Q^{k+m}_{\bar z})$, we get
$$|T(\chi_{Q^k_{\bar z}\setminus Q^{k+m}_{\bar z}}\nu^-)(\bar z)|\lesssim \frac{\nu^-(Q^k_i)}{\ell(Q^{k+m}_{\bar z})^N} 
\leq \ve\,\frac{\mu(Q^k_i)}{\ell(Q^{k+m}_{\bar z})^N} = \ve\,\frac{\mu(Q^k_i)}{2^{-m(N+1)}\ell(Q^{k}_i)^N} 
\lesssim 2^{m(N+1)}\,\ve.$$

To estimate $|T(\chi_{Q^k_{\bar z}\setminus Q^{k+m}_{\bar z}}\nu^+)(\bar z)|$ from below, recall  
that our kernel is 
$$P(\bar x)=\cfrac{t}{(t^2+|x|^2)^{\frac{N+1}2}}  \,\chi_{\{t>0\}}.$$
Then, by the choice of $\bar z$, it follows that
\begin{equation}\label{eqpos1}
P(\bar z- \bar y) \geq 0\quad \mbox{ for all $\bar y\in Q^k_{\bar z}\setminus Q^{k+m}_{\bar z}$.}
\end{equation}
We write
$$|T(\chi_{Q^k_{\bar z}\setminus Q^{k+m}_{\bar z}}\nu^+)(\bar z)| \geq \int_{Q^k_{\bar z}\setminus Q^{k+m}_{\bar z}} P(\bar z- \bar y)\,
d\nu^+(\bar y) = \sum_{h=k}^{k+m-1} \int_{Q^h_{\bar z}\setminus Q^{h+1}_{\bar z}} P(\bar z- \bar y)\,
d\nu^+(\bar y).$$
Taking into account \rf{eqpos1} and the fact that, for $k\leq h\leq k+m-1$, $Q^h_{\bar z}\setminus Q^{h+1}_{\bar z}$ contains a cube $Q^{h+1}_j$ such that for all $\bar y=(y,s)$,
$$0<u-s \approx |\bar y-\bar z|\approx\ell(Q^{h+1}_j),$$
using also \rf{eqmatp*},
we deduce
$$\int_{Q^h_{\bar z}\setminus Q^{h+1}_{\bar z}} P(\bar z- \bar y)\,
d\nu^+(\bar y) \gtrsim \frac{\nu^+(Q^{h+1}_j)}{\ell(Q^{h+1}_j)^N}\gtrsim f(\bar x_0)\,\frac{\mu(Q^{h+1}_j)}{\ell(Q^{h+1}_j)^N}
= f(\bar x_0),$$
Thus,
$$|T(\chi_{Q^k_{\bar z}\setminus Q^{k+m}_{\bar z}}\nu^+)(\bar z)| \gtrsim (m-1)\,f(\bar x_0).$$
Together with the previous estimate for $|T(\chi_{Q^k_{\bar z}\setminus Q^{k+m}_{\bar z}}\nu^-)(\bar z)|$, this tells us that
$$\wt T_*\nu(\bar z) \gtrsim (m-1)\,f(x_0) - C\,2^{m\frac{N+1}{N}}\,\ve,$$
for some fixed $C>0$.
It is clear that if we choose $m$ big enough and then $\ve$ small enough, depending on $m$, this
lower bound contradicts \rf{eqmatpar'}, as wished.
\end{proof}

\vv

\section{Neither analytic capacity nor Newtonian capacity are comparable to $\gamma_\Theta^{1/2}$.}\label{section-comparability}
In this section we will show that, in the plane, the capacity $\gamma_\Theta^{1/2}$ is not comparable to analytic capacity nor to Newtonian capacity; two classical capacities related to complex analysis and potential theory with critical dimension $1$, as $\gamma_\Theta^{1/2}$. More precisely, we will construct two sets, the first one will be a compact set $E_1\subset\C$ with $\gamma_\Theta^{1/2}(E_1)>0$ but vanishing Newtonian capacity. The second example we will be a compact set $E_2\subset\C$  with positive analytic capacity but removable for the $\frac{1}{2}-$fractional heat equation. 

First we recall some definitions. For a compact set $E\subset\mathbb C$, the analytic capacity of $E$ is defined as 
$$
\gamma(E)=\sup\{|f'(\infty)|\},
$$
where the supremum is taken over all analytic functions $f:\mathbb C\setminus E \rightarrow \mathbb C$ with $|f|\le 1$ in $\mathbb C\setminus E$ and 
$
\displaystyle{
f'(\infty)=\lim_{z\rightarrow \infty} z(f(z)-f(\infty))}.
$

The Newtonian capacity of a compact set $E\subset \mathbb C$ can be defined as 
$$
C(E)=\sup\{\mu(E):\mbox{spt}(\mu)\subset E, \, \int_{\mathbb C}\frac{1}{|z-w|}d\mu(w)\le 1 \, \, \text{ for } z\in \mathbb C\}.
$$

It is well-known that if $E\subset\C$ is a segment of length $\ell$, then $\gamma (E)=\ell/4$ and $C(E)=0$ (see \cite[Proposition 1.4]{Tolsa-llibre} and \cite[Corollary 3.6]{Garnett} respectively). 

We will prove that if $E_1\subset\C$ is a horizontal segment then $\gamma_\Theta^{1/2}(E_1)>0$. On the other hand, if $E_2\subset\C$ is a vertical segment, $\gamma_\Theta^{1/2}(E_2)=0$. Consequently $C(E_1)$ is not comparable to $\gamma_\Theta^{1/2}(E_1)$ and $\gamma_\Theta^{1/2}(E_2)$ is not comparable to $\gamma(E_2)$. The result reads as follows.

\begin{propo}\label{a} Let $L$ be a positive number. Set $E_1=\{(x,0)\subset\R^2: 0\le \, x\le L\}$ and $E_2=\{(0,t)\subset\R^2: 0\le t\le L\}.$ Then\newline

\begin{enumerate} 

 \item[a)]  $\gamma_\Theta^{1/2}(E_1)>0.$\newline
 
 \item[b)] $\gamma_\Theta^{1/2}(E_2)=0.$
\end{enumerate}
 \end{propo}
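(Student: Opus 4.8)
The plan is to treat the two cases by very different mechanisms, reflecting the lack of symmetry of the kernel $P$ in the time variable. For part~(a), the idea is to exhibit a concrete positive measure on the horizontal segment $E_1$ whose potential $P*\mu$ is bounded. The natural candidate is $\mu = \HH^1|_{E_1}$, i.e.\ arclength on $E_1=\{(x,0):0\le x\le L\}$. One then has to estimate
$$
P*\mu(\bar x) = \int_0^L P\big((x_1-y,\,t)\big)\,dy = \int_0^L \frac{t}{\big(t^2 + (x_1-y)^2\big)^{(N+1)/2}}\,dy
$$
for $\bar x=(x_1,x_2,\dots,x_N,t)$ — here in the plane $N=1$, so the exponent is $1$ and the kernel is $t/(t^2+(x_1-y)^2)$, which is (a scaling of) the Poisson kernel of the upper half plane. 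For $t>0$ this integral is uniformly bounded by $\pi$ (it is bounded by $\int_{\R} \frac{t}{t^2+r^2}\,dr = \pi$), and for $t\le 0$ it vanishes. Hence $\|P*\mu\|_\infty\le\pi$, so $\gamma_\Theta^{1/2}(E_1)\ge \mu(E_1)/\pi = L/\pi>0$. I would present this computation, noting that it is exactly the point where the horizontal (spatial) orientation of the segment is used: the variable of integration is the \emph{spatial} variable, which is the one in which $P$ has slow polynomial decay and a nice convolution structure.

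For part~(b) the claim is that $\gamma_\Theta^{1/2}(E_2)=0$ for the \emph{vertical} (time-direction) segment $E_2=\{(0,t):0\le t\le L\}$. By Theorem~\ref{teomeas}, since $\HH^1(E_2)=L<\infty$, any distribution $\nu$ supported on $E_2$ with $\|P*\nu\|_\infty\le 1$ is of the form $\nu = f\,\HH^1|_{E_2}$ with $\|f\|_{L^\infty(\HH^1|_{E_2})}\lesssim 1$; in particular $\nu$ is a finite signed measure on $E_2$ with bounded density with respect to arclength. So it suffices to show that the only such measure with bounded potential is $\nu=0$ — equivalently $\langle\nu,1\rangle=0$ for every such $\nu$. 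Write $\nu = f(u)\,du$ on the $u$-axis, $0\le u\le L$. Then for a point $\bar x=(0,t)$ on the segment,
$$
P*\nu(0,t) = \int_0^L P\big((0,\,t-u)\big)\,f(u)\,du = \int_0^t \frac{(t-u)}{(t-u)^{N+1}}\,f(u)\,du = \int_0^t \frac{f(u)}{(t-u)^{N-1}}\,du,
$$
using that $P((0,s)) = s/s^{N+1}=s^{-(N-1)}$ for $s>0$ and $0$ for $s\le0$; in the plane $N=1$ this is simply $\int_0^t f(u)\,du$. Boundedness of $P*\nu$ along the segment then forces the primitive $F(t)=\int_0^t f(u)\,du$ to be bounded (which it is automatically) — that alone is not a contradiction, so the real argument must use the potential \emph{off} the segment and the singularity structure.

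The sharper route, which I expect to be the technical heart of the proof, is to test against points $\bar x = (x_1,0,\dots,0,t)$ slightly off the axis (spatial coordinate $x_1\ne 0$, same range of $t$) and let $x_1\to 0$. There
$$
P*\nu(\bar x) = \int_0^{\min(t,L)} \frac{(t-u)\,f(u)}{\big((t-u)^2 + x_1^2\big)^{(N+1)/2}}\,du .
$$
As $x_1\to 0$ this is a regularization of a Riesz-type potential in the $u$-variable (in the plane, of the Hilbert-transform/Cauchy type, since $(t-u)/((t-u)^2+x_1^2) \to \mathrm{pv}\,1/(t-u)$). Uniform boundedness of $P*\nu$ as $x_1\to 0$ then says that this principal-value integral, i.e.\ essentially the Hilbert transform of $f\chi_{[0,L]}$, is in $L^\infty$ near $E_2$; combined with $\|f\|_\infty\lesssim 1$ and a standard argument one concludes that $f$ must be, up to the endpoints, forced to vanish — the obstruction being that a nonzero bounded density on an interval cannot have bounded Hilbert transform all the way to (and inside) the interval unless it is zero, and in any case $\langle\nu,1\rangle = \int_0^L f = 0$. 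I would make this rigorous by choosing an explicit admissible test function $\vphi$ concentrated near $E_2$ and using Lemma~\ref{lemgrow} together with the one-dimensional nature of the problem, or alternatively by a direct contradiction: if $\int_0^L f\ne 0$, then at Lebesgue points of $f$ near an endpoint the logarithmic (for $N=1$, $\int \mathrm{pv}\,\frac{du}{t-u}$ type) divergence of the potential as the evaluation point approaches the axis blows up, contradicting $\|P*\nu\|_\infty\le 1$. The main obstacle is organizing this blow-up cleanly, i.e.\ showing that no cancellation in the density $f$ can save boundedness of the potential as one slides the spatial coordinate to zero; the anti-derivative reformulation $F(t)=\int_0^t f$ and the observation that $P*\nu$ controls both $F$ and (in the limit) its Hilbert transform, which together pin down $F\equiv 0$, is the way I would close it.
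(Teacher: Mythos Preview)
Your part~(a) is correct and identical to the paper's: the measure $\mu=\HH^1|_{E_1}$ gives $P*\mu(a,s)=\arctan((L-a)/s)-\arctan(-a/s)\le\pi$, whence $\gamma_\Theta^{1/2}(E_1)\ge L/\pi$.

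Part~(b) contains a computational slip that derails the rest. You write $P((0,s))=s/s^{N+1}=s^{-(N-1)}$, but $s/s^{N+1}=s^{-N}$; for $N=1$ this is $1/s$, not $1$. Hence the on-axis potential is
\[
P*\nu(0,t)=\int_0^{t}\frac{f(u)}{t-u}\,du,
\]
not the primitive $\int_0^t f$. The off-axis version $\int_0^{\min(t,L)}\frac{(t-u)f(u)}{(t-u)^2+x_1^2}\,du$ is therefore \emph{not} a regularized Hilbert transform: because $P$ vanishes for negative times, the integral is one-sided, with no principal-value cancellation. The statement that a nonzero bounded density on an interval cannot have bounded Hilbert transform is in any case false (any smooth $f$ supported in $(0,L)$ has bounded $Hf$), so that route does not close.

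Once the kernel is computed correctly, the argument is immediate and does not need the Hilbert transform at all: at any Lebesgue point $t_0\in(0,L)$ with $f(t_0)=a\neq0$, the off-axis potential satisfies
\[
P*\nu(x_1,t_0)\supseteq a\int_0^{\delta}\frac{v}{v^2+x_1^2}\,dv+O(1)=\tfrac{a}{2}\log\frac{\delta^2+x_1^2}{x_1^2}+O(1)\to\infty
\]
as $x_1\to0$, contradicting $\|P*\nu\|_\infty\le1$. Hence $f=0$ a.e.\ and $\nu=0$. The paper takes a slightly different route: it mollifies the admissible distribution in $t$ to get a signed measure on a slightly enlarged segment $E_\ve$, then uses the \emph{positivity} of $P$ to pass to a positive measure $\mu$ with $\mu(E_\ve)>0$ and $\|P*\mu\|_\infty\le2$; for a positive $\mu$ the one-sided integral $\int_s^{s+\ell}\frac{d\mu(t)}{t-s}$ bounds $\mu([s,s+\ell])/\ell$, and a covering argument forces $\mu=0$. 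Your reduction via Theorem~\ref{teomeas} to a bounded density is actually cleaner than the paper's mollification step, but the key move you are missing is either the positivity reduction or the direct logarithmic blow-up above.
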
\vv
 
\begin{proof}\begin{enumerate}
 \item[a)]
 Let $\chi_{E_1}$ be the characteristic function of $E_1$ and consider the positive measure $$\mu=\chi_{E_1}dx$$ and the function 
 $$
 f(a,s)=\big(P\ast\chi_{E_1}\big)(a,s), 
 $$
 where $P$ is the fundamental solution of the $\frac 12$-fractional heat equation in $\R^2$, that is $\displaystyle P(x,t)=\frac{t}{x^2+t^2}\chi_{t>0}$ .
 Clearly $f(a,s)=0$ if $s\le 0.$ For $s>0$ we have 
 
\begin{align*}
f(a,s)=\int_0^L\frac{s}{(a-x)^2+(s)^2}dx=\arctan(\frac{L-a}{s})-\arctan(\frac{-a}{s})\le \pi, 
\end{align*}
so that $$\gamma^{1/2}_\Theta(E_1)\ge\frac{\mu(E_1)}{\pi}=\frac{L}{\pi}>0.$$
\vv

\item[b)] Assume that $\gamma_\Theta^{1/2}(E_2)>0$. Then there exists a distribution $S$ supported on $E_2$ such that $\|P\ast S\|_{\infty}\le 1.$ Approximating the distribution $S$ by signed measures (take for instance the convolution of $S$ with approximations of the identity in the variable $t$), we can assume that there exists a signed measure $\nu$ supported on the segment $$E_{\varepsilon}=\{(0,t):-\varepsilon\le t\le L+\varepsilon\}$$ for some small $\varepsilon>0$, satisfying $\|P\ast \nu\|_{\infty}\le 2.$ Since $P$ is a non negative kernel, there exists a positive measure $\mu$ supported on $E_{\varepsilon}$ with $\mu(E_{\varepsilon})>0$ such that 
\begin{equation}
\label{a}
\|P\ast \mu\|_{\infty}\le 2.
\end{equation}
To get a contradiction we will show that \eqref{a} implies $\mu(E_{\varepsilon})=0.$ 
Since, by lemma \ref{lemgrow}, the measure $\mu$ has linear growth, given $\eta>0$ we can take $c\in(-\ve,L+\ve)$ such that $$\mu(\{(0,t): c\le t \le L
+\ve\})<\eta.$$ Set $F=\{(0,t): -\ve\le t \le c\}$. By \eqref{a}, for any $(0,s)\in F$ there exists $\ell=\ell(0,s)>0$ such that 
$$
\int_s^{s+\ell}\frac{t-s}{(t-s)^2}d\mu(t)\le \eta.
$$
Hence, since $F$ is an interval,  there exists a finite number of almost disjoint intervals $I_{n}=\{0\}\times[s_n, s_n+\ell_n], \, 1\le n\le N,$ such that
$\displaystyle{F\subset\bigcup_{n=1}^{N} I_n}$ and 
$$
\mu(I_n)=\int_{s_n}^{s_n+\ell_n}d\mu(t)\le\int_{s_n}^{s_n+\ell_n}\frac{\ell_n}{t-s_n}d\mu(t)\le \eta \ell_n. 
$$
Consequently, 
$$
\mu(E_{\varepsilon})\le \mu(F)+\eta\lesssim \sum_{n=1}^{N} \mu(I_n)+\eta\le  \eta (\sum_{n=1}^{N}\ell_n+1)\le \eta(c+\ve+1),
$$
which means $\mu(E_{\varepsilon})=0$ and we get the desired contradiction.  
\end{enumerate}

 \end{proof}

\vv
\section{$s$-Capacities and $s$-growth for $s\in(1/2,1)$.}\label{section-s1}
Let $\|\cdot\|_{*,p}$ denote the norm of the parabolic BMO space:
$$\|f\|_{*,p} = \sup_Q \avint_Q |f -m_Q f|\,dm,$$
where the supremum is taken over all $s-$parabolic cubes $Q\subset \R^{N+1}$,
$dm$ stands for the Lebesgue measure in $\R^{N+1}$ and $m_Qf$ is the mean of $f$ with respect to $dm$. \newline

For a function $f:\R^{N+1}\to\R$ and $\alpha\in(0,1)$, the $\alpha-$fractional derivative with respect to $t$ (recall that $x\in\R^N$ and $t\in\R$) is defined as
$$
\partial_t^{\alpha}f(x,t)=\int\frac{f(x,s)-f(x,t)}{|s-t|^{1+\alpha}}ds.
$$

Now we introduce the $\gamma_\Theta^s$ capacities for $s\in (1/2,1)$. Given a compact set $E\subset\R^{N+1}$, we define 
\begin{equation}\label{caps21}
\gamma_{\Theta}^s(E)=\sup\{|\langle\nu,1\rangle|\},
\end{equation}
where the supremum is taken over all distributions $\nu$ supported on $E$ satisfying 
 \begin{equation}\label{potentials1}
 \|(-\Delta)^{s-\frac12}(\nu*P_s)\|_\infty\leq 1\quad\mbox{ and}\quad\|\partial_t^{1-\frac1{2s}}(\nu*P_s)\|_{*,p}\leq 1.
 \end{equation}
 \vv
 Notice that for $s=1/2$ we obtain $\gamma_\Theta^{1/2}$.

  The capacity $\gamma_\Theta^s$ is called the $s$-fractional caloric capacity of $E$. We set $\gamma_{\Theta,+}^s(E)$ as in \eqref{caps21} but with the supremum restricted to all positive measures $\nu$ supported on $E$ satisfying \eqref{potentials1}. Clearly $$\gamma_{\Theta,+}^s(E)\leq\gamma_\Theta^s(E).$$
  We are now ready to describe the basic relationship between the capacity $\gamma_\Theta^s$ and Hausdorff content (the $d$-dimensional Hausdorff content will be denoted by ${\mathcal H}_\infty^d$).
\begin{theorem}\label{hausdorffcontent}
 Let $s\in (1/2,1)$. For every Borel set $E\subset\R^{N+1}$, $$\gamma_{\Theta,+}^s(E)\leq\gamma^s_{\Theta}(E)\lesssim {\mathcal H}^{N+2s-1}_{\infty}(E)$$ and $${\mbox{dim}}_{H,p}(E)>N+2s-1\Longrightarrow \gamma_{\Theta}^s(E)>0.$$\end{theorem}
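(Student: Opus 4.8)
The plan is to mimic the structure of the proof of the analogous statement for $s=1/2$ (the Lemma preceding Theorem \ref{teoremov}), adapting it to the two potentials appearing in \eqref{potentials1}. For the upper bound $\gamma_\Theta^s(E)\lesssim \HH^{N+2s-1}_\infty(E)$, I would first reduce to $E$ compact. Given a distribution $\nu$ supported on $E$ satisfying \eqref{potentials1}, one needs an $(N+2s-1)$-growth estimate $|\langle\nu,\vphi\rangle|\lesssim \ell(Q)^{N+2s-1}$ for suitable test functions $\vphi$ associated to an $s$-parabolic cube $Q$. The key identity is that $P_s$ is the fundamental solution of $\Theta^s$, so one writes $\langle\nu,\vphi\rangle = \langle\nu, \Theta^s\vphi * P_s\rangle$ and distributes the operator: using $\Theta^s = (-\Delta)^s + \partial_t = (-\Delta)^{s-1/2}(-\Delta)^{1/2}(-\Delta)^{s-1/2}\cdots$ — more precisely, one factors so that the pairing becomes $\langle (-\Delta)^{s-1/2}(\nu*P_s), (-\Delta)^{1/2}\vphi\rangle + \langle \partial_t^{1-1/(2s)}(\nu*P_s), \partial_t^{1/(2s)}\vphi\rangle$, matching exactly the two norms controlled in \eqref{potentials1}, modulo the fact that the second potential is only in parabolic BMO rather than $L^\infty$. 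Then one must show the corresponding $s$-admissibility estimates $\|(-\Delta)^{1/2}_x$-type pieces of $\vphi\|_{L^1}\lesssim \ell(Q)^{N+2s-1}$ and the BMO-duality estimate for the $\partial_t^{1/(2s)}\vphi$ piece; here the growth estimates on $P_s$ and its fractional derivatives from Lemma \ref{bounds}, together with Lemma \ref{bound}-style annular integration, are what make it work. Once the $(N+2s-1)$-growth of $\nu$ is in hand, one covers $E$ by balls $B_i$ with $\sum r(B_i)^{N+2s-1}\le 2\HH^{N+2s-1}_\infty(E)$, builds a Harvey–Polking partition of unity $\{\vphi_i\}$ subordinate to the $2B_i$ with $\|\nabla\vphi_i\|_\infty\lesssim 1/r(B_i)$, and sums: $|\langle\nu,1\rangle|\le\sum_i|\langle\nu,\vphi_i\rangle|\lesssim\sum_i r(B_i)^{N+2s-1}\lesssim \HH^{N+2s-1}_\infty(E)$.

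For the lower bound, suppose $\dim_{H,p}(E) = d > N+2s-1$; we may take $E$ bounded. By Frostman's lemma (in the $s$-parabolic metric) there is a nonzero positive measure $\mu$ supported on $E$ with $\mu(B_p(\bar x,r))\le r^d$ for all $\bar x,r$. The task is to verify that both potentials in \eqref{potentials1} applied to $\mu$ are finite (with a constant depending only on $\diam E$, $d$, $N$, $s$). For the first, one writes $(-\Delta)^{s-1/2}(\mu*P_s) = \mu * \big((-\Delta)^{s-1/2}P_s\big)$ and invokes estimate (2) of Lemma \ref{bounds} with $\alpha = s-1/2$, giving $|(-\Delta)^{s-1/2}P_s(\bar x)|\lesssim |\bar x|_p^{-(N+2s-1)}$; then $|(-\Delta)^{s-1/2}(\mu*P_s)(\bar x)|\lesssim \int |\bar x-\bar y|_p^{-(N+2s-1)}\,d\mu(\bar y)\lesssim \diam_p(E)^{d-(N+2s-1)}$ by the standard layer-cake argument against the growth of $\mu$. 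For the second, one uses estimate (5) of Lemma \ref{bounds}, $|\partial_t^{1-1/(2s)}P_s(\bar x)|\lesssim |x|^{1-N}|\bar x|_p^{-2s}$, so that $\partial_t^{1-1/(2s)}(\mu*P_s)$ is bounded — hence in $\|\cdot\|_{*,p}$ — by the same type of integral estimate (the extra $|x|^{1-N}$ is more singular in $x$ but the total homogeneity is still $-(N+2s-1)$, so the layer-cake bound still closes, perhaps after splitting into the region $|x|\lesssim |\bar x|_p$ and its complement). Normalizing $\mu$ by the larger of the two resulting constants yields a competitor, so $\gamma_\Theta^s(E)\ge \mu(E)/C > 0$.

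The main obstacle I anticipate is the BMO endpoint in the second condition of \eqref{potentials1}. Unlike the $s=1/2$ case where both potentials were simply $L^\infty$, here the duality $\langle \partial_t^{1-1/(2s)}(\nu*P_s),\,\partial_t^{1/(2s)}\vphi\rangle \lesssim \|\partial_t^{1-1/(2s)}(\nu*P_s)\|_{*,p}\,\|\partial_t^{1/(2s)}\vphi\|_{\HH^1}$ requires an atomic $\HH^1$ (parabolic Hardy space) bound on $\partial_t^{1/(2s)}\vphi$ rather than an $L^1$ bound, which is a strictly stronger requirement on the admissible test functions; one must check that the partition-of-unity functions $\vphi_i$ (or the regularized bump functions used in the growth lemma) produce genuine $\HH^1$-atoms with the right normalization $\lesssim \ell(Q)^{N+2s-1}$. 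Verifying the $\HH^1$-cancellation and size conditions for $\partial_t^{1/(2s)}\vphi$ — equivalently, controlling a fractional-in-time derivative of a smooth bump in Hardy-space norm — is the delicate technical point; everything else is a routine adaptation of the $s=1/2$ machinery (Harvey–Polking, layer-cake against growth, Lemma \ref{bounds}).
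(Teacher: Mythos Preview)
Your plan for the upper bound matches the paper's exactly: the growth estimate $|\langle\nu,\vphi\rangle|\lesssim\ell(Q)^{N+2s-1}$ is established in Lemma~\ref{case1/21} by the factoring you describe, and the $\HH^1$--BMO obstacle you flag is handled just as you anticipate --- the paper writes $\partial_t\vphi = c\,\partial_t^{1-1/(2s)}g$ with $g=\partial_t\vphi *_t |t|^{-1/(2s)}$, shows that $g$ has mean zero, size $\lesssim\ell(Q)^{-1}$ on $4I_Q$, and decay $\lesssim\ell(Q)^{2s}|t-c_Q|^{-1-1/(2s)}$ off $2I_Q$ (i.e.\ atom-like bounds), and then estimates $\langle f,g\rangle$ for $f\in\mathrm{BMO}_p$ directly by splitting into $2Q$ and annuli, using John--Nirenberg on the annuli. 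So your identification of the delicate point and its resolution is accurate.

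For the lower bound your treatment of the first potential (via Lemma~\ref{bounds}(2) with $\alpha=s-\tfrac12$) is identical to the paper's. For the second potential, however, you diverge: you propose to show $\partial_t^{1-1/(2s)}(\mu*P_s)\in L^\infty$ directly from the kernel bound (5), whereas the paper does \emph{not} attempt an $L^\infty$ bound but instead proves the BMO estimate via two auxiliary lemmas --- first that $P_s*\mu$ is $\mathrm{Lip}(1-\tfrac1{2s})$ in $t$ whenever $\mu$ has upper parabolic growth of degree $N+2s-1$ (Lemma~\ref{lemlip}), then that this Lipschitz control, combined with a local $L^1$ bound coming from (5) and Fubini, yields the BMO estimate (Lemma~\ref{growthbmo}). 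Your $L^\infty$ route can in fact be made to work because the Frostman measure has growth $\alpha>N+2s-1$ strictly, but your remark ``after splitting into the region $|x|\lesssim|\bar x|_p$ and its complement'' is vacuous (one always has $|x|\le|\bar x|_p$, so the complement is empty), and the honest argument requires covering thin parabolic cylinders $\{|x-y|<\rho,\ |t-u|<r^{2s}\}$ by roughly $(r/\rho)^{2s}$ parabolic balls of radius $\rho$ in order to extract the right bound from the growth condition --- homogeneity alone does not make the layer-cake close. The paper's BMO-via-Lipschitz route has the advantage of working already at the critical growth exponent $N+2s-1$ and sidesteps this covering issue entirely.
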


To prove theorem \ref{hausdorffcontent} we need to study the growth conditions satisfied by positive measures and distributions with properties \eqref{potentials1}.

\subsection{The case of positive measures.}
We state first two lemmas concerning the case when the distribution $\nu$ is a positive measure. But before going to the next lemma, recall that a function $f(x,t)$ defined in $\R^{N+1}$ is Lip$(\alpha)$, $0\alpha<1$, in the $t$ variable if $$\|f\|_{Lip(\alpha)}=\sup_{x\in\R^N,t,u\in\R}\frac{|f(x,t)-f(x,u)|}{|t-u|^{\alpha}}<\infty.$$
\begin{lemma} \label{lemlip}
Let $s\in (1/2,1)$ and let $\mu$ be a positive measure in $\R^{N+1}$ with upper parabolic growth of degree $N+2s-1$ with constant $1$. Then,
$$\|P_s*\mu\|_{Lip(1-\frac1{2s})}\lesssim 1.$$
\end{lemma}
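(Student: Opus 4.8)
\emph{Sketch of the argument.} Fix $x\in\R^N$ and two times $t>u$, write $h=t-u$, $\bar x=(x,t)$, $\bar x'=(x,u)$, and note that $\dist_p(\bar x,\bar x')=h^{1/{2s}}$, the parabolic scale attached to a time increment of size $h$. (We understand $P_s*\mu$ to be finite, which is automatic when $\mu$ is finite or compactly supported; in any case the estimate below bounds the increment $|(P_s*\mu)(\bar x)-(P_s*\mu)(\bar x')|$ directly.) The plan is to split
$$\bigl|(P_s*\mu)(\bar x)-(P_s*\mu)(\bar x')\bigr|\le\int_{|\bar x-\bar y|_p<2h^{1/{2s}}}\bigl|P_s(\bar x-\bar y)-P_s(\bar x'-\bar y)\bigr|\,d\mu(\bar y)+\int_{|\bar x-\bar y|_p\ge 2h^{1/{2s}}}\bigl|P_s(\bar x-\bar y)-P_s(\bar x'-\bar y)\bigr|\,d\mu(\bar y)$$
and bound each term by $C\,h^{1-\frac1{2s}}$.

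On the \emph{near} region I would just use the triangle inequality together with the size estimate $|P_s(\bar z)|\lesssim|\bar z|_p^{-N}$ from Lemma \ref{bounds}(1), noting also $|\bar x'-\bar y|_p\le|\bar x-\bar y|_p+\dist_p(\bar x,\bar x')<3h^{1/{2s}}$ since $\dist_p$ is a metric. Decomposing $B_p(\bar x,2h^{1/{2s}})$ (resp.\ $B_p(\bar x',3h^{1/{2s}})$) into dyadic parabolic annuli and invoking the $(N+2s-1)$-growth of $\mu$, each of the two integrals is
$$\lesssim\int_{B_p(\bar x,\,C h^{1/{2s}})}\frac{d\mu(\bar y)}{|\bar x-\bar y|_p^{N}}\lesssim\sum_{k\ge0}\frac{\mu\bigl(B_p(\bar x,2^{-k}Ch^{1/{2s}})\bigr)}{(2^{-k}Ch^{1/{2s}})^{N}}\lesssim\sum_{k\ge0}\bigl(2^{-k}Ch^{1/{2s}}\bigr)^{2s-1}\lesssim h^{\frac{2s-1}{2s}}=h^{1-\frac1{2s}},$$
the geometric series converging precisely because $s>\tfrac12$.

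On the \emph{far} region the size estimate alone is useless, since the kernel $|\bar z|_p^{-N}$ decays too slowly against a measure whose growth degree $N+2s-1$ exceeds $N$; here I would exploit the cancellation in $P_s(\bar x-\bar y)-P_s(\bar x'-\bar y)=P_s(x-y,t-v)-P_s(x-y,u-v)$ through a mean value estimate in the time variable, using $|\partial_tP_s(\bar z)|\lesssim|\bar z|_p^{-N-2s}$ from Lemma \ref{bounds}(3). The pointwise bound to establish is
$$|\bar x-\bar y|_p\ge2h^{1/{2s}}\quad\Longrightarrow\quad\bigl|P_s(\bar x-\bar y)-P_s(\bar x'-\bar y)\bigr|\lesssim\frac{h}{|\bar x-\bar y|_p^{N+2s}};$$
granting this, a dyadic decomposition into parabolic annuli $\{|\bar x-\bar y|_p\sim 2^k h^{1/{2s}}\}$, $k\ge1$, together with the growth of $\mu$, gives
$$\int_{|\bar x-\bar y|_p\ge2h^{1/{2s}}}\bigl|P_s(\bar x-\bar y)-P_s(\bar x'-\bar y)\bigr|\,d\mu(\bar y)\lesssim h\sum_{k\ge1}\frac{(2^kh^{1/{2s}})^{N+2s-1}}{(2^kh^{1/{2s}})^{N+2s}}\lesssim\frac{h}{h^{1/{2s}}}=h^{1-\frac1{2s}},$$
which completes the proof.

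The main obstacle is precisely this pointwise difference bound on the far region, and the delicate point is that $P_s(z,\cdot)$ is not differentiable at time $0$ (indeed $\partial_tP_s$ jumps there), so the mean value argument has to be run case by case. If $t-v>0$ and $u-v>0$ one applies the mean value theorem on $[u-v,t-v]\subset(0,\infty)$; in the easy sub-case $|x-y|\ge(t-v)^{1/{2s}}$ every intermediate time $\tau$ satisfies $|(x-y,\tau)|_p\ge|x-y|=|\bar x-\bar y|_p$ and Lemma \ref{bounds}(3) applies directly, while in the sub-case $|x-y|<(t-v)^{1/{2s}}$ one observes that the hypothesis $|\bar x-\bar y|_p\ge2h^{1/{2s}}$ together with $s>\tfrac12$ forces $t-v\ge 2^{2s}h>2h$, hence $t-v\le C_s\,(u-v)$ with $C_s=(1-2^{-2s})^{-1}<2$, so every such $\tau$ obeys $\tau^{1/{2s}}\gtrsim|\bar x-\bar y|_p$ and Lemma \ref{bounds}(3) again gives the required decay. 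If $u-v\le0$ the second term vanishes and $v\ge u$ forces $t-v\le h$, so $|\bar x-\bar y|_p=|x-y|$; one then writes $|P_s(x-y,t-v)|=|P_s(x-y,t-v)-P_s(x-y,0)|$ and applies the mean value theorem on $[0,t-v]$, which is legitimate since $P_s(x-y,\cdot)$ is continuous up to $0$ and smooth on $(0,\infty)$, invoking Lemma \ref{bounds}(3) once more. The case $t-v\le0$ is trivial, the integrand being $0$.
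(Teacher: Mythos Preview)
Your proposal is correct and follows essentially the same approach as the paper: split the integral according to whether $\bar y$ is parabolically within distance $\approx h^{1/2s}$ of $\bar x$ (the paper centers at the midpoint $\bar x_0=\tfrac12(\bar x+\bar x')$ instead, which is immaterial), use the size bound $|P_s|\lesssim|\cdot|_p^{-N}$ plus $(N+2s-1)$-growth on the near region, and the mean-value estimate $|\partial_t P_s|\lesssim|\cdot|_p^{-N-2s}$ on the far region. Your case analysis handling the non-differentiability of $P_s(\cdot,t)$ at $t=0$ is in fact more explicit than the paper's, which simply writes $\sup_{\xi\in[\bar x-\bar y,\hat x-\bar y]}|\partial_t P_s(\xi)|$ and leaves that detail to the reader.
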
 

\begin{proof}
Let $\bar x = (x,t)$, $\hat x = (x,u)$, and $\bar x_0 = \frac12(\bar x + \hat x)$.
Then, writing $\bar y=(y,s)$, we split
\begin{align*}
|P_s*\mu(\bar x) - P_s*\mu(\hat x)| & \leq \int_{|\bar y -\bar x_0|_p\geq 2 |\bar x-\hat x|_p}
|P_s(x-y,t-s) - P_s(x-y,u-s)|\,d\mu(\bar y) \\
&+ \int_{|\bar y -\bar x_0|_p < 2 |\bar x-\hat x|_p}\!
|P_s(x-y,t-s) - P_s(x-y,u-s)|\,d\mu(\bar y)\\& =: I_1 + I_2.
\end{align*}

To shorten notation, we write $d:=|\bar x-\hat x|_p = |t-u|^{\frac1{2s}}$. Then we have
$$I_1\lesssim \sum_{k\geq 1} \int_{2^k d\leq |\bar y -\bar x_0|_p< 2^{k+1}d} \,\,\sup_{\xi\in [\bar x-\bar y,
\hat x-\bar y]} |\partial_t P_s(\xi)| \,|t-u|\,d\mu(\bar y).$$
Since by property $(3)$ in lemma \ref{bounds}
$$|\partial_t P_s (\xi)|\lesssim \frac1{|\xi|_p^{N+2s}}\approx \frac1{(2^kd)^{N+2s}}\quad
\mbox{ if $\xi\in [\bar x-\bar y,
\hat x-\bar y]$, $|\bar y -\bar x_0|_p\approx 2^{k}d$,}$$
we deduce that
$$I_1\lesssim \sum_{k\geq 1} \frac{\mu(B_p(\bar x_0,2^{k+1}d))}{(2^k d)^{N+2s}}\,|t-u|\lesssim \frac{|t-u|}d= |t-u|^{1-\frac1{2s}}.$$

Next we deal with $I_2$. Writing $B_0= B_p(\bar x_0,2d)$, we have
$$I_2 \leq P_s*(\chi_{B_0}\mu)(\bar x) + P_s*(\chi_{B_0}\mu)(\hat x).$$
Observe now that by lemma \ref{bounds}
\begin{align*}
0&\leq P_s*(\chi_{B_0}\mu)(\bar x) \lesssim \int_{\bar y\in B_0} \frac1{|\bar x- \bar y|_p^N}\,
d\mu(\bar y) 
\\&\leq \int_{|\bar x-\bar y|_p\leq 4d} \frac1{|\bar x- \bar y|_p^N}\,
d\mu(\bar y)
\lesssim d^{2s-1} =|t-u|^{1-\frac1{2s}}.
\end{align*}
The last estimate follows by splitting the integral into parabolic annuli and using the parabolic
growth of order $N+2s-1$ of $\mu$, for example.
The same estimate holds replacing $\bar x$ by $\hat x$. Then gathering all the estimates above, the lemma
follows.
\end{proof}
\begin{lemma}\label{growthbmo}
Let $s\in(1/2,1)$ and let $\mu$ be a positive measure in $\R^{N+1}$ which has upper parabolic growth $N+2s-1$, with constant $1$. Then $$\|\partial_t^{1-\frac1{2s}}P_s*\mu\|_{*,p}\lesssim 1.$$
\end{lemma}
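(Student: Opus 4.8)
The plan is to establish the parabolic BMO bound for $\partial_t^{\alpha}h$, where $\alpha:=1-\tfrac1{2s}\in(0,\tfrac12)$ and $h:=P_s*\mu$, cube by cube: for each $s$-parabolic cube $Q=Q_1\times I_Q$ of side $\ell:=\ell(Q)$ and center $\bar x_Q=(x_Q,t_Q)$ I must produce a constant $c_Q$ with $\frac1{m(Q)}\int_Q|\partial_t^{\alpha}h-c_Q|\,dm\lesssim1$. The argument rests on two H\"older inputs for $h$. The first is Lemma~\ref{lemlip}: $h$ is $\mathrm{Lip}(\alpha)$ in $t$, uniformly in $x$, with seminorm $\lesssim1$. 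The second is a companion estimate, obtained by repeating the proof of Lemma~\ref{lemlip} with the roles of the variables changed, i.e.\ splitting $h(\bar x)-h(\bar x')$ into a near and a far part with respect to $|\bar x-\bar x'|_p$ and using parts $(1)$ and $(4)$ of Lemma~\ref{bounds} (together with Lemma~\ref{lemlip} for the $t$-direction): $h$ is $s$-parabolic H\"older continuous of order $2s-1$, $|h(\bar x)-h(\bar x')|\lesssim|\bar x-\bar x'|_p^{\,2s-1}$, with constant $\lesssim1$. Two numerical identities, both equivalent to $\alpha=1-\tfrac1{2s}$, will make every exponent fit: $2s(1-\alpha)=1$ and $2s\alpha=2s-1$.

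Next I would split $\mu=\mu_1+\mu_2$ with $\mu_1:=\mu|_{B_p(\bar x_Q,100\ell)}$ finite of mass $\lesssim\ell^{N+2s-1}$, and treat $\partial_t^{\alpha}(P_s*\mu_1)$ and $\partial_t^{\alpha}(P_s*\mu_2)$ separately. For $\mu_1$ the key is the pointwise bound $(5)$ of Lemma~\ref{bounds}, $|\partial_t^{\alpha}P_s(\bar z)|\lesssim|z_x|^{-(N-1)}|\bar z|_p^{-2s}$: after the change of variables $\bar z=\bar x-\bar y$ one checks $\int_Q|\partial_t^{\alpha}P_s(\bar x-\bar y)|\,dm(\bar x)\lesssim\ell$ uniformly in $\bar y\in B_p(\bar x_Q,100\ell)$, so that (by Tonelli) $\partial_t^{\alpha}(P_s*\mu_1)=(\partial_t^{\alpha}P_s)*\mu_1$ a.e.\ and
\[
\int_Q|\partial_t^{\alpha}(P_s*\mu_1)|\,dm\le\int\Bigl(\int_Q|\partial_t^{\alpha}P_s(\bar x-\bar y)|\,dm(\bar x)\Bigr)d\mu_1(\bar y)\lesssim\ell\,\mu_1(\R^{N+1})\lesssim\ell^{N+2s}=m(Q).
\]
Thus this term contributes $\lesssim1$ in average over $Q$ and nothing to $c_Q$.

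For $\mu_2$ I would split the defining integral of $\partial_t^{\alpha}$ at the $t$-scale of $Q$, writing $\partial_t^{\alpha}(P_s*\mu_2)=A+B$ where $A$ collects the $\sigma$ with $|\sigma-t_Q|<2\ell^{2s}$. In $A$ the second variable runs over the cylinder $Q_1\times 2I_Q$, which lies within parabolic distance $\lesssim\ell$ of $\bar x_Q$, while $\supp\mu_2$ is at distance $\ge100\ell$; hence, by bound $(3)$ of Lemma~\ref{bounds} and the $(N+2s-1)$-growth of $\mu_2$, $P_s*\mu_2$ is $\ell^{-1}$-Lipschitz in $t$ on that cylinder, so $|A(x,t)|\lesssim\ell^{-1}\int_{|t-\sigma|\lesssim\ell^{2s}}|t-\sigma|^{-\alpha}\,d\sigma\lesssim\ell^{-1}(\ell^{2s})^{1-\alpha}=1$ on $Q$, using $2s(1-\alpha)=1$. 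For $B$ I would use that $P_s*\mu_2$ is itself $\mathrm{Lip}(\alpha)$ in $t$ and parabolic-$C^{2s-1}$ with constants $\lesssim1$ (Lemma~\ref{lemlip} and its companion for $\mu_2$): estimating $B(x,t)-B(x,t_Q)$ via the $t$-smoothness of the kernel $|\sigma-t|^{-1-\alpha}$ (here $\ell^{2s\alpha}\cdot\ell^{-2s\alpha}=1$) and $B(x,t)-B(x',t)$ via $|(P_s*\mu_2)(x,\cdot)-(P_s*\mu_2)(x',\cdot)|\lesssim|x-x'|^{2s-1}\le\ell^{2s-1}$ (here $\ell^{2s-1}\cdot\ell^{-2s\alpha}=1$ by $2s\alpha=2s-1$) gives $\mathrm{osc}_Q B\lesssim1$. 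Taking $c_Q:=m_QB$ then yields $\frac1{m(Q)}\int_Q|\partial_t^{\alpha}h-c_Q|\,dm\lesssim1$.

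The step I expect to be the genuine obstacle is controlling the $t$-local piece $A$, or more precisely everything affected by $\partial_t^{\alpha}$ being nonlocal in $t$: one cannot simply localize, and the bounds $\int_Q|\partial_t^{\alpha}P_s(\bar x-\bar y)|\,dm(\bar x)\lesssim\ell$ and $|A|\lesssim1$ only come out because of the sharp, anisotropic size estimate $(5)$ of Lemma~\ref{bounds} (with its $|x|^{-(N-1)}$ factor) together with the exact borderline value $\alpha=1-\tfrac1{2s}$; this is where I would spend the most effort. A second delicate point is the companion parabolic-H\"older-$(2s-1)$ estimate: one must use that, for $s>\tfrac12$, a measure of degree $N+2s-1>N$ cannot concentrate on low-dimensional subsets enough to make the near-part integrals in the Lemma~\ref{lemlip}-type argument diverge. (One should also assume, as in all the applications, that $P_s*\mu\not\equiv+\infty$ — automatic for compactly supported $\mu$ — so that $\partial_t^{\alpha}h$ is a genuine $L^1_{loc}$ function modulo constants.)
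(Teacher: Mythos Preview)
Your proposal is correct and follows essentially the same approach as the paper: split the measure into a local part (handled via the pointwise bound~(5) of Lemma~\ref{bounds} and Fubini, exactly as you do) and a far part (handled by splitting the defining integral of $\partial_t^{\alpha}$ at the $t$-scale of $Q$ and combining the $\ell^{-1}$-Lipschitz bound from~(3), Lemma~\ref{lemlip}, and the $x$-H\"older/parabolic-$(2s-1)$ estimate). The only differences are organizational --- you abstract the $x$-direction H\"older bound into a ``companion'' of Lemma~\ref{lemlip} and split the $\partial_t^{\alpha}$ integral once (into $A$ and $B$, centered at $t_Q$) before estimating oscillation, whereas the paper treats the $t$- and $x$-oscillation cases separately and splits centered at~$t$ --- but the ingredients and the arithmetic ($2s(1-\alpha)=1$, $2s\alpha=2s-1$) are identical.
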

\begin{proof}
Let $Q\subset \R^{n+1}$ be a fixed 
$s$-parabolic cube. We have to show that there exists some constant $c_Q$
(to be chosen below) such that
$$\int_Q |\partial_t^{1-\frac1{2s}} P_s*\mu - c_Q|\,dm \lesssim \ell(Q)^{N+2s}.$$
To this end consider a $C^\infty$ test function $\wt\chi_{5Q}$ which equals $1$ on $5Q$ and vanishes in $6Q^c$.

We also denote $\wt\chi_{5Q^c} = 1 - \wt\chi_{5Q}$. Then we
split
\begin{align*}
\int_Q |\partial_t^{1-\frac1{2s}} P_s*\mu - c_Q|\,dm &\leq
\int_Q |\partial_t^{1-\frac1{2s}} P_s*(\wt\chi_{5Q}\mu)|\,dm \\&\quad+ \int_Q |\partial_t^{1-\frac1{2s}} P_s*(\wt\chi_{5Q^c}\mu) - c_Q|\,dm =: I_1 + I_2.
\end{align*}
To deal with the integral $I_1$, we use lemma \ref{bounds} and write
$$
\int_Q |\partial_t^{1-\frac1{2s}} P_s*(\wt\chi_{5Q}\mu)|\,dm\lesssim
\int_Q \frac1{|x|^{N-1}\,|\bar x|_p^{2s}}* (\wt\chi_{5Q}\mu)\,dm \leq \int_{6Q} \frac1{|y|^{N-1}\,|\bar y|_p^{2s}}* (\chi_{Q}m)\,d\mu.$$
Taking into account that, for $\bar y=(y,u)$,
$$\frac1{|y|^{N-1}\,|\bar y|_p^{2s}}\leq \frac1{|y|^{N-1+s/2}}\,\frac1{u^{3/4}},$$
and writing $Q=Q_1\times I_Q$, where $Q_1$ is a cube with side length $\ell(Q)$ in $\R^N$ and $I_Q$ is an interval of length $\ell(Q)^{2s}$,
we deduce that for $\bar x=(x,t)\in 6Q$,
\begin{align*}
\frac1{|y|^{N-1}\,|\bar y|_p^{2s}}* (\chi_{Q}m)(\bar x) &\lesssim \int_{y\in Q_1} \frac1{|x-y|^{N-1+s/2}}dy\,\int_{u\in
I_Q}\frac1{|t-u|^{3/4}}\,du \\&\lesssim \ell(Q)^{1-s/2}\,(\ell(Q)^{2s})^{1/4} = \ell(Q).
\end{align*}
Thus,
$$\int_Q |\partial_t^{1-\frac1{2s}} P_s*(\wt\chi_{5Q}\mu)|\,dm\lesssim \ell(Q)\,\mu(6Q)\lesssim \ell(Q)^{N+2s}.$$

Next we will estimate the integral $I_2$, taking $c_Q:=
\partial_t^{1-\frac1{2s}} P_s*(\wt\chi_{5Q^c}\mu)(\bar x_Q)$, where $\bar x_Q$ is the center of $Q$.
To show that $I_2\lesssim \ell(Q)^{N+2s}$, it suffices to prove
$$|\partial_t^{1-\frac1{2s}} P_s*(\wt\chi_{5Q^c}\mu)(\bar x) - 
\partial_t^{1-\frac1{2s}} P_s*(\wt\chi_{5Q^c}\mu)(\bar x_Q)|\lesssim 1.$$
In turn, to prove this it is enough to show that
\begin{equation}
|\partial_t^{1-\frac1{2s}} P_s*(\wt\chi_{5Q^c}\mu)(\bar x) - 
\partial_t^{1-\frac1{2s}} P_s*(\wt\chi_{5Q^c}\mu)(\bar y)|\lesssim 1\label{eqdif}
\end{equation}
for $\bar x,\bar y\in \R^{N+1}$ in the following two cases:
\begin{itemize}
\item Case 1:  $\bar x,\bar y\in Q$ of the form $\bar x= (x,t)$, $\bar y= (y,t)$.
\item Case 2:  $\bar x,\bar y\in Q$ of the form $\bar x= (x,t)$, $\bar y= (x,u)$.
\end{itemize}

\vv
\noi {\em Proof of \rf{eqdif} in Case 1.}
Let $\phi=\wt\chi_{5Q^c}$. We split
\begin{align*}
&|\partial_t^{1-\frac1{2s}} P_s* (\phi\mu)(x,t) - \partial_t^{1-\frac1{2s}} P_s* (\phi\mu)(y,t)|\\
 & =
\left|\int \frac{P_s*(\phi\mu)(x,u) - P_s*(\phi\mu)(x,t)}{|u-t|^{2-\frac1{2s}}}du -\!
\int \frac{P_s*(\phi\mu)(y,u) - P_s*(\phi\mu)(y,t)}{|u-t|^{2-\frac1{2s}}}du\right|\\
& \leq\quad\quad
\int_{|u-t|\leq2^{2s}\ell(Q)^{2s}} \frac{|P_s*(\phi\mu)(x,u) - P_s*(\phi\mu)(x,t)|}{|u-t|^{2-\frac1{2s}}}\,du\\
&\quad\quad  +
\int_{|u-t|\leq2^{2s}\ell(Q)^{2s}} \frac{|P_s*(\phi\mu)(y,u) - P_s*(\phi\mu)(y,t)|}{|u-t|^{2-\frac1{2s}}}\,du
\\
&\quad\quad+
\!\int_{|u-t|>2^{2s}\ell(Q)^{2s}}\!\!\!\! \frac{|P_s\!*(\phi\mu)(x,u) - P_s\!*(\phi\mu)(x,t) - P_s\!*(\phi\mu)(y,u) + P_s\!*(\phi\mu)(y,t)|}{|u-t|^{2-\frac1{2s}}}du\\
& =: A_1  + A_2 + B.
\end{align*}

First we will estimate the term $A_1$ . For $u,t$ such that $|u-t|\leq2^{2s}\ell(Q)^{2s}$, we write
$$|P_s*(\phi\mu)(x,u) - P_s*(\phi\mu)(x,t)|\leq |u-t|\,\|\partial_t P_s*(\phi\mu)\|_{\infty,(1+2^{2s})Q}.$$
Observe now that for each $\bar z= (z,v)\in (1+2^{2s})Q$ (see property $(3)$ in lemma \ref{bounds})
$$|\partial_t P_s*(\phi\mu)(\bar z)| \lesssim\int_{5Q^c} \frac1{|\bar z-\bar w|_p^{N+2s}}\,d\mu(\bar w)
\lesssim \frac1{\ell(Q)},$$
by splitting the last domain of integration into parabolic annuli and using the growth condition of order $N+2s-1$ of $\mu$. Thus,
 $$|P_s*(\phi\mu)(x,u) - P_s*(\phi\mu)(x,t)|\lesssim \frac{|u-t|}{\ell(Q)}.$$
 Plugging this into the integral that
defines $A_1$, we obtain
$$A_1\lesssim \int_{|u-t|\leq2^{2s}\ell(Q)^{2s}} \frac{|u-t|}{\ell(Q)\,|u-t|^{2-\frac1{2s}}}\,ds 
\lesssim \frac{(\ell(Q)^{2s})^{\frac1{2s}}}{\ell(Q)}=1.$$
By exactly the same arguments, just writing $y$ in place of $x$ above, we deduce also that
$$A_2\lesssim 1.$$

Concerning the term $B$, we write
\begin{align*}
B & \leq 
\int_{|u-t|>2^{2s}\ell(Q)^{2s}} \frac{|P_s*(\phi\mu)(x,t) - P_s*(\phi\mu)(y,t)|}{|u-t|^{2-\frac1{2s}}}\,du\\
&\quad
+
\int_{|u-t|>2^{2s}\ell(Q)^{2s}} \frac{|P_s*(\phi\mu)(x,u) - P_s*(\phi\mu)(y,u)|}{|u-t|^{2-\frac1{2s}}}\,du=B_1+B_2.
\end{align*}
By Lemma \ref{bounds}, it follows that for $\bar \xi\in Q$, $$|\nabla_x P_s*(\phi\mu)(\bar \xi)|\lesssim\int_{5Q^c}\frac{d\mu(\bar z)}{|\bar\xi-\bar z|^{N+1}_p}\lesssim \ell(Q)^{2s-2}.$$
Therefore,
$$|P_s*(\phi\mu)(x,t) - P_s*(\phi\mu)(y,t)|\leq \|\nabla_x P_s*(\phi\mu)\|_{\infty,Q} 
\,|x-y|\lesssim \ell(Q)^{2s-1}.$$
Hence $B_1\lesssim 1.$
For $B_2$, we split the integral into annulus $\mathcal{A}_k=\{u\in\R:|u-t|\approx (2^k\ell(Q))^{2s}\}$ and write
\begin{align*}
B_2&=\int_{|u-t|>2^{2s}\ell(Q)^{2s}} \frac{|P_s*(\phi\mu)(x,u) - P_s*(\phi\mu)(y,u)|}{|u-t|^{2-\frac1{2s}}}\,du\\&=\sum_k\int_{\mathcal{A}_k}\frac{|P_s*(\phi\mu)(x,u) - P_s*(\phi\mu)(y,u)|}{|u-t|^{2-\frac1{2s}}}\,du.
\end{align*}
For $u\in\mathcal{A}_k,$ using lemma \ref{bounds} we get
\begin{align*}
&|P_s*(\phi\mu)(x,u)- P_s*(\phi\mu)(y,u)|\\&\lesssim \underset{5Q^c\cap\{|\bar z-(x_Q,u)|\leq 3\ell(Q)\} }{\int}\frac{d\mu(\bar z)}{|(x_Q,u)-\bar z|_p^N}+\underset{5Q^c\cap\{|\bar z-(x_Q,u)|> 3\ell(Q)\} }{\int}\big|P_s((x,u)-\bar z)-P_s((y,u)-\bar z)\big|d\mu(\bar z)\\&\lesssim \underset{5Q^c\cap\{|\bar z-(x_Q,u)|\leq 3\ell(Q)\} }{\int}\frac{d\mu(\bar z)}{|(x_Q,u)-\bar z|_p^N}+\underset{5Q^c\cap\{|\bar z-(x_Q,u)|> 3\ell(Q)\} }{\int}\frac{|x-y|}{|(x_Q,u)-\bar z|_p^{N+1}}d\mu(\bar z)\\&\lesssim \ell(Q)^{2s-1}+\ell(Q)\ell(Q)^{2s-2}=\ell(Q)^{2s-1}.
\end{align*}

Therefore

$$B_2\lesssim \int_{|u-t|>2^{2s}\ell(Q)^{2s}} \frac{\ell(Q)^{2s-1}}{|u-t|^{2-\frac1{2s}}}\,du \lesssim \frac{\ell(Q)^{2s-1}}{(\ell(Q)^{2s})^{1-\frac1{2s}}}\lesssim 1.$$
So \rf{eqdif} holds in this case.

\vv
\noi {\em Proof of \rf{eqdif} in Case 2.}
As in Case 1, we write
\begin{align*}
&|\partial_t^{1-\frac1{2s}} P_s* (\phi\mu)(x,t) - \partial_t^{1-\frac1{2s}} P_s* (\phi\mu)(x,u)|\\
 & =
\biggl|\int \frac{P_s*(\phi\mu)(x,v) - P_s*(\phi\mu)(x,t)}{|v-t|^{2-\frac1{2s}}}dv -\!
\int \frac{P_s*(\phi\mu)(x,v) - P_s*(\phi\mu)(x,u)}{|v-u|^{2-\frac1{2s}}}dv\biggr|\\
& \leq\quad\quad
\int_{|v-t|\leq2^{2s}\ell(Q)^{2s}} \frac{|P_s*(\phi\mu)(x,v) - P_s*(\phi\mu)(x,t)|}{|v-t|^{2-\frac1{2s}}}\,dv\\
&\quad\quad+
\int_{|v-t|\leq2^{2s}\ell(Q)^{2s}} \frac{|P_s*(\phi\mu)(x,v) - P_s*(\phi\mu)(x,u)|}{|v-u|^{2-\frac1{2s}}}\,dv
\\
&\quad\quad+
\int_{|v-t|>2^{2s}\ell(Q)^{2s}} \biggl|\frac{P_s*(\phi\mu)(x,v) - P_s*(\phi\mu)(x,t)}{|v-t|^{2-\frac1{2s}}}-\frac{P_s*(\phi\mu)(x,v) - P_s*(\phi\mu)(x,u)}{|v-u|^{2-\frac1{2s}}}\biggr|dv\\
& =: A_1'  + A_2' + B'.
\end{align*}
The terms $A_1'$ and $A_2'$ can be estimated exactly in the same way as the terms $A_1$ and $A_2$ in Case 1, so that
$$A_1' + A_2'\lesssim 1.$$

Concerning $B'$, we have
\begin{align*}
B'&\leq 
 \int_{|v-t|>2^{2s}\ell(Q)^{2s}} 
\left|\frac1{|v-t|^{2-\frac1{2s}}} - \frac1{|v-u|^{2-\frac1{2s}}}\right|\,
 \big|P_s*(\phi\mu)(x,v) - P_s*(\phi\mu)(x,t)\big| \,dv\\
&\qquad 
+ \int_{|v-t|>2^{2s}\ell(Q)^{2s}} 
\frac1{|v-u|^{2-\frac1{2s}}}\,
 \big|P_s*(\phi\mu)(x,t) - P_s*(\phi\mu)(x,u)\big| \,dv
\end{align*}
Taking into account that, for $|v-t|>2^{2s}\ell(Q)^{2s}$,
$$\left|\frac1{|v-t|^{2-\frac1{2s}}} - \frac1{|v-u|^{2-\frac1{2s}}}\right|\lesssim \frac{|t-u|}{|v-t|^{3-\frac1{2s}}}
\lesssim \frac{\ell(Q)^{2s}}{|v-t|^{3-\frac1{2s}}}$$
and that, by Lemma \ref{lemlip}, $P_s*(\phi\mu)(x,\cdot)$ is Lip$(1-\frac1{2s})$ in the variable $t$, 
we deduce that
$$B' \lesssim 
  \ell(Q)^{2s}\int_{|v-t|>2^{2s}\ell(Q)^{2s}} \frac{|v - t|^{1-\frac1{2s}}}{|v-t|^{3-\frac1{2s}}}
 \,dv\\
+ 
 \int_{|v-t|>2^{2s}\ell(Q)^{2s}}\frac1{|v-u|^{2-\frac1{2s}}}\,|t-u|^{1-\frac1{2s}} \,dv \lesssim 1.
$$

\vv
This concludes the proof of $\displaystyle{\|\partial_t^{1-\frac1{2s}}P_s*\mu\|_{*,p}\lesssim 1}$ for $s\in (1/2,1)$. 
\end{proof}
\subsection{Growth condition and admissible functions.}
When $s\in(1/2,1),$ we say that a ${\mathcal C}^{1}$ function $\vphi$ is admissible for an $s$-parabolic cube $Q=Q_1\times I_Q$, $Q_1\subset\R^N$ and $I_Q\subset\R$, if it is supported on $Q\subset\R^{N+1}$ and satisfies 
\begin{equation}\label{admissiblephix}
\int_{\R^{N+1}}|(-\Delta_x)^{1/2}\vphi(x,t)|dxdt\lesssim\ell(Q)^{N+2s-1}
\end{equation} 
and 
\begin{equation}\label{admissiblephit}
\|\partial_t\vphi\|_\infty\leq\frac1{\ell(I_Q)}=\frac1{\ell(Q)^{2s}}.
\end{equation}
As in the case $s=1/2$ (see \rf{admissiblephi}) recall that $\displaystyle(-\Delta_x)^{1/2}\vphi\approx\sum_{j=1}^NR_j\partial_j\vphi,$ with $R_j$ being the Riesz transforms with Fourier multiplier $\xi_j/|\xi|$. Then, for an $s$-parabolic cube $Q\subset\R^{N+1},$ $Q=Q_1\times I_Q$ with $Q_1\subset\R^N$ and $I_Q\subset\R$, one can write
\begin{align*}
\int_{\R^{N+1}}|(-\Delta_x)^{1/2}\vphi(x,t)|dxdt&\approx\int_{I_Q}\int_{\R^N}|\sum_{j=1}^NR_j\partial_j\vphi(x,t)|dxdt\\&\leq\sum_{j=1}^N\int_{I_Q}\int_{\R^N}|R_j\partial_j\vphi(x,t)|dxdt
\end{align*}
Therefore if $\vphi$ satisfies  \rf{admissiblephit} and $$\int_{I_Q}\int_{\R^N}|R_j\partial_j\vphi(x,t)|dxdt\lesssim\ell(Q)^{N+2s-1},\;\mbox{ for }1\leq j\leq N,$$ then $\vphi$ is admissible for the $s$-parabolic cube $Q$, $s\in (1/2,1)$.

Let's remark that  the analogues of lemmas \ref{admissibility} and \ref{standardtest} also hold in this context and if $\varphi$ is a standard test function supported on an $s-$parabolic cube $Q=Q_1\times I_Q\subset\R^{N+1}$ with $\|\partial_t\varphi\|_\infty\lesssim\ell(I_Q)^{-1}=\ell(Q)^{-2s}$ and $\|\Delta_x\varphi\|_{\infty}\lesssim\ell(Q)^{-2}$, then $\varphi$ is admissible for $Q$. In fact, arguing as in \eqref{standardtestok} we have that 
\begin{align*}
\int_{\R^{N+1}}|(-\Delta_x)^{1/2}\vphi(x,t)|dxdt&\approx\int_{\R^{N+1}}|\Delta_x\varphi*_x k(x,t)|dxdt\\&\lesssim\int_{I_Q}\int_{\R^N\setminus2Q_1}\int_{Q_1}\frac{|\varphi(y,t)|}{|x-y|^{N+1}}dydxdt+\int_{I_Q}\int_{2Q_1}\int_{Q_1}\frac{|\Delta_y\varphi(y,t)|}{|x-y|^{N-1}}dydxdt\\&\lesssim\sum_{k=1}^\infty\frac{\ell(Q)^{N+2s}(2^k\ell(Q))^{N}}{(2^k\ell(Q))^{N+1}}+\ell(Q)^{N+2s-1}\lesssim\ell(Q)^{N+2s-1}.
\end{align*}
For these values of $s$, the analogous growth condition to lemma \ref{lemgrow} reads as follows.
\begin{lemma}\label{case1/21}Let $s\in [\frac12,1)$ and $\nu$ a distribution such that $$ \|(-\Delta)^{s-\frac12}(\nu*P_s)\|_\infty\leq 1\quad\mbox{ and}\quad\|\partial_t^{1-\frac1{2s}}(P_s*\nu)\|_{*,p}\leq 1.$$ If $\vphi$ is an admissible function for an $s-$parabolic cube $Q\subset\R^{N+1}$, then $$\displaystyle \left|\langle \nu,\varphi\rangle \right|\lesssim\ell(Q)^{N+2s-1}.$$
\end{lemma}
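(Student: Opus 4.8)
The plan is to follow closely the proof of Lemma~\ref{lemgrow}, distributing the operator $\Theta^s=(-\Delta)^s+\partial_t$ between the potential $u:=P_s*\nu$ and the admissible function $\vphi$, and then \emph{splitting} each of the two resulting derivatives so that precisely the controlled potentials $(-\Delta)^{s-\frac12}u$ and $\partial_t^{1-\frac1{2s}}u$ appear. Since $P_s$ is the fundamental solution of $\Theta^s$ and the operators $(-\Delta)^s$, $\partial_t$, $\partial_t^{\alpha}$ all commute with convolution, exactly as in Lemma~\ref{lemgrow} one has $\vphi=P_s*\Theta^s\vphi$, hence
$$|\langle\nu,\vphi\rangle|=\bigl|\langle u,(-\Delta)^s\vphi\rangle+\langle u,\partial_t\vphi\rangle\bigr|\le|\langle u,(-\Delta)^s\vphi\rangle|+|\langle u,\partial_t\vphi\rangle|,$$
all pairings being understood distributionally (a regularization $P^\ve_s$ as in Lemma~\ref{punt} may be inserted if desired). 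It remains to estimate the two terms.

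For the first term, write $(-\Delta)^s=(-\Delta)^{s-\frac12}\circ(-\Delta_x)^{\frac12}$ and move the self-adjoint spatial operator $(-\Delta)^{s-\frac12}$ onto $u$:
$$|\langle u,(-\Delta)^s\vphi\rangle|=\bigl|\langle(-\Delta)^{s-\frac12}u,\,(-\Delta_x)^{\frac12}\vphi\rangle\bigr|\le\|(-\Delta)^{s-\frac12}u\|_\infty\,\bigl\|(-\Delta_x)^{\frac12}\vphi\bigr\|_{L^1(\R^{N+1})}\lesssim\ell(Q)^{N+2s-1},$$
using the first hypothesis in \eqref{potentials1} and the admissibility bound \eqref{admissiblephix}.

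For the second term, let $g$ be the one-dimensional Riesz potential in the variable $t$, of order $1-\frac1{2s}\in(0,\frac12)$, applied to $\partial_t\vphi$; explicitly $g(x,t)=c\int|t-\tau|^{-\frac1{2s}}\,\partial_\tau\vphi(x,\tau)\,d\tau$, a convergent integral because $\frac1{2s}<1$ and $\partial_\tau\vphi$ is bounded with compact support. By construction $\partial_t^{1-\frac1{2s}}g=c'\,\partial_t\vphi$ for some $c'\neq0$, so moving the self-adjoint operator $\partial_t^{1-\frac1{2s}}$ onto $u$ and using $H^1_p$--${\rm BMO}_p$ duality (here $H^1_p$ is the parabolic Hardy space and ${\rm BMO}_p$ the parabolic ${\rm BMO}$ space, with norm $\|\cdot\|_{*,p}$),
$$|\langle u,\partial_t\vphi\rangle|=\tfrac1{|c'|}\bigl|\langle\partial_t^{1-\frac1{2s}}u,\,g\rangle\bigr|\lesssim\bigl\|\partial_t^{1-\frac1{2s}}u\bigr\|_{*,p}\,\|g\|_{H^1_p}\lesssim\|g\|_{H^1_p},$$
by the second hypothesis in \eqref{potentials1}. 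Thus it suffices to show that $\ell(Q)^{-(N+2s-1)}g$ is a fixed multiple of an $s$-parabolic $H^1$-molecule centered at the center $\bar x_Q$ of $Q$. This rests on four elementary facts: (i) $g(x,\cdot)\equiv0$ for $x\notin Q_1$ (since $\partial_\tau\vphi(x,\cdot)\equiv0$ there), so $\supp g\subset Q_1\times\R$; (ii) $\int g(x,t)\,dt=0$ for each $x$, because $\int\partial_t\vphi(x,t)\,dt=0$ (equivalently, the symbol of the composite operator applied to $\vphi$ vanishes at frequency $\tau=0$); (iii) $\|\partial_t\vphi\|_\infty\le\ell(Q)^{-2s}$ (the admissibility bound \eqref{admissiblephit}) gives $|g|\lesssim\ell(Q)^{-1}$ on $2Q$ and $\|g\|_{L^2(\R^{N+1})}\lesssim\ell(Q)^{(N+2s)/2-1}\approx\ell(Q)^{N+2s-1}\,|2Q|^{-1/2}$; and (iv) subtracting $|t-t_Q|^{-\frac1{2s}}$ and using the cancellation $\int\partial_\tau\vphi\,d\tau=0$ gives $|g(x,t)|\lesssim\ell(Q)^{2s}\,|t-t_Q|^{-1-\frac1{2s}}$ for $|t-t_Q|\gtrsim\ell(Q)^{2s}$, i.e. $|g(\bar x)|\lesssim\ell(Q)^{2s}\,|\bar x-\bar x_Q|_p^{-(2s+1)}$ off $2Q$, whence $\int|\bar x-\bar x_Q|_p^{\ve}\,|g(\bar x)|\,d\bar x\lesssim\ell(Q)^{N+2s-1+\ve}$ for any $0<\ve<1$. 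Taken together these yield $\|g\|_{H^1_p}\lesssim\ell(Q)^{N+2s-1}$, which completes the proof.

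The main obstacle is the second term: one must make rigorous the identity $\partial_t^{1-\frac1{2s}}g=c'\,\partial_t\vphi$ and, above all, verify that $g$ genuinely lies in $H^1_p$ with the stated norm. For the latter the molecular characterization (equivalently, a careful atomic decomposition telescoping in the $t$ variable) is essential: estimating $\langle\partial_t^{1-\frac1{2s}}u-c,\,g\chi_{2^kQ}\rangle$ crudely over parabolic annuli by Hölder's inequality loses too much, because $g$ is supported in the thin tube $Q_1\times\R$ whose slice $2^kQ\cap(Q_1\times\R)$ occupies only a vanishing fraction of $2^kQ$. When $s=\tfrac12$ one has $(-\Delta)^{s-\frac12}=\mathrm{Id}$ and $\partial_t^{1-\frac1{2s}}=\mathrm{Id}$, so $g=\partial_t\vphi$, ${\rm BMO}_p$ is replaced by $L^\infty$, $\|g\|_{H^1_p}$ by $\|\partial_t\vphi\|_{L^1}$, and the whole argument collapses to Lemma~\ref{lemgrow}.
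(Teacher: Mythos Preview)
Your proof is correct and follows essentially the same structure as the paper's: the same split $|\langle\nu,\vphi\rangle|\le I_1+I_2$, the identical treatment of $I_1$ via $(-\Delta)^s=(-\Delta)^{s-\frac12}(-\Delta)^{\frac12}$ together with \eqref{admissiblephix}, and for $I_2$ the same auxiliary function $g=c\,\partial_t\vphi*_t|t|^{-\frac1{2s}}$ with the same pointwise bounds (your (iii) and (iv) are precisely the paper's estimates $|g|\lesssim\ell(Q)^{-1}$ on $4I_Q$ and $|g(x,t)|\lesssim\ell(Q)^{2s}|t-c_Q|^{-1-\frac1{2s}}$ off $2I_Q$).

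The only difference is how the pairing $\langle f,g\rangle$, with $f=\partial_t^{1-\frac1{2s}}(P_s*\nu)\in\mathrm{BMO}_p$, is bounded. You invoke $H^1_p$--$\mathrm{BMO}_p$ duality and a parabolic molecular characterization of $H^1_p$. The paper instead does this by hand: using $\int g\,dm=0$ one subtracts $m_Qf$, splits into $2Q$ and annuli $A_i=2^iQ\setminus 2^{i-1}Q$, and on each $A_i$ applies H\"older with exponent $q>N$ together with John--Nirenberg. Contrary to what your last paragraph suggests, this direct annulus argument \emph{does} work: the thin support $\supp g\subset Q_1\times\R$ gives $m(\supp g\cap 2^iQ)\lesssim\ell(Q)^N\ell(2^iQ)^{2s}$ instead of $\ell(2^iQ)^{N+2s}$, and this gain of a factor $2^{-iN/q'}$ is exactly what makes the sum over $i$ converge once $q>N$. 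Your $H^1_p$ packaging is morally equivalent---the paper's computation is essentially the proof that a molecule pairs correctly with $\mathrm{BMO}_p$---but it relies on having a parabolic molecular theory available as a black box, whereas the paper's route is entirely self-contained.
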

\begin{proof}
\begin{align*}
|\langle\nu,\vphi\rangle|&=|\langle\nu,\Theta^s\vphi*P_s\rangle|\leq|\langle\nu*P_s,(-\Delta)^s\vphi\rangle|+|\langle\nu*P_s,\partial_t\vphi\rangle|=I_1+I_2.
\end{align*}
Notice that since $\frac12\leq s<1$, if we consider the Fourier transform of $\displaystyle(-\Delta)^s\vphi$, we get $$\widehat{\left((-\Delta)^s\vphi\right)}(\xi)=|\xi|^{2s}\hat\vphi=|\xi|^{2s-1}|\xi|\hat\vphi.$$ Hence we can write $(-\Delta)^s\vphi=(-\Delta)^{s-\frac12}(-\Delta)^{\frac12}\vphi$. Therefore, since $\vphi$ is admissible for the $s$-parabolic cube $Q$, \rf{admissiblephix} is satisfied and so
\begin{align*}
I_1&=|\langle \nu*P_s,(-\Delta)^s\vphi\rangle|=|\langle (-\Delta)^{s-\frac12}(\nu*P_s),(-\Delta)^\frac12\vphi\rangle|\\&\leq \|(-\Delta)^{s-\frac12}(\nu*P_s)\|_\infty\int_{I_Q\times\R^N}\big|(-\Delta)^\frac12\vphi(x,t)\big|dxdt\\&\lesssim\ell(Q)^{N+2s-1}.
\end{align*}
We deal now with $I_2$. Notice that $$\partial_t\vphi=c\;\partial_t^{1-\frac1{2s}}(\partial_t\vphi*\frac1{|t|^{1/{2s}}}),$$ for some constant $c$. This identity can be easily seen by taking the Fourier transform (on the variable $t$). Set $\displaystyle g=\partial_t\vphi*\frac1{|t|^{1/{2s}}}$. Then
$$I_2=|\langle\nu*P_s,\partial_t\vphi\rangle|=|c||\langle \partial_t^{1-\frac1{2s}}(\nu*P_s),g\rangle|.$$
Write $Q=Q_1\times I_Q$, with $Q_1\subset\R^N$ and $I_Q\subset\R$, and let $c_Q$ be the center of the interval $I_Q$. Because of the zero mean of $\partial_t\vphi$ (integrating with respect to $t$) it is easy to check that $|g(x,t)|$ decays at most like $\displaystyle |t|^{-1-\frac1{2s}}$ at infinity. Indeed,  for $t\notin 2I_Q$ we have
\begin{align}\label{out}
|g(x,t)|&=\left|\int_{I_Q}\frac{\partial_u\vphi(x,u)}{|t-u|^{\frac1{2s}}}du\right|=\left|\int_{I_Q}\partial_u\vphi(x,u)\left(\frac1{|t-u|^{\frac1{2s}}}-\frac1{|t-c_Q|^{\frac1{2s}}}\right)du\right|\\&\lesssim\frac{\ell(I_Q)}{|t-c_Q|^{1+\frac1{2s}}}\int_{I_Q}|\partial_u\vphi(x,u)|du\lesssim\frac{\ell(I_Q)}{|t-c_Q|^{1+\frac1{2s}}}=\frac{\ell(Q)^{2s}}{|t-c_Q|^{1+\frac1{2s}}},\nonumber
\end{align}
where $c_Q$ is the center of $Q$.
And for $t\in4I_Q$, \begin{equation}\label{in}|g(x,t)|\lesssim\|\partial_u\vphi\|_\infty\ell(I_Q)^{1-\frac1{2s}}\lesssim\ell(I_Q)^{-\frac1{2s}}=\ell(Q)^{-1}.\end{equation}
Since $\int g\;dm=0$, writing $f=\partial_t^{1-\frac1{2s}}(\nu*P_s)$, we have
\begin{align*}
I_2&=|c||\langle f,g\rangle|=\left|c\int(f-m_Qf)\;g\;dm\right|\\&\lesssim\int_{2Q}|f-m_Qf|\;|g|\;dm+\int_{\R^{N+1}\setminus 2Q}(f-m_Qf)\;g\;dm\\&=I_{21}+I_{22}.
\end{align*}
Hence, due to \eqref{in}, $$I_{21}\lesssim\|f\|_{*,p}\,\ell(Q)^{N+2s}\ell(Q)^{-1}\leq\ell(Q)^{N+2s-1}.$$
For $I_{22}$, we split the domain of integration in annuli. Write $A_i= 2^{i} Q\setminus 2^{i-1} Q$ for $i\geq1$.
Remark that for an $s$-parabolic cube $Q = Q_1\times I_Q$, we denote
$$2^i Q = 2^i Q_1 \times 2^{2si} I_Q,$$
so that $2^iQ$ is an $s$-parabolic cube too (notice that if $Q$ is centered at the origin and we consider the parabolic dilation $\delta_\lambda(x,t) = (\lambda x,\lambda^{2s}t)$, $\lambda>0$, we have $2^iQ = \delta_{2^i}(Q)$).
Then, using the decay of $g$ given by \rf{out}, we get
\begin{equation}\label{eq3434}
I_{22}\lesssim\sum_{i=1}^\infty \frac{\ell(Q)^{2s}}{\ell(2^iQ)^{1+2s}}\left(\int_{A_i\cap \supp g}|f-m_{2^{i}Q}f|\,dm+ \int_{A_i\cap \supp g}|m_{2^{i}Q}f-m_{Q}f|\,dm\right).
\end{equation}
To estimate the first integral on the right hand side, recall that 
$\supp g\subset Q_1\times \R$. Using H\"older's inequality with some 
exponent $q\in(0,\infty)$ to be chosen in a moment and the fact that
$f\in BMO_p$ (together with John-Nirenberg), then we get:
\begin{align*}
\int_{A_i\cap \supp g}|f-m_{2^{i}Q}f|\,dm 
&\leq \left(\int_{2^iQ}|f-m_{2^{i}Q}f|^q\,dm\right)^{\frac1q} \,m(\supp g\cap 2^iQ)^{\frac1{q'}}\\
& \lesssim \ell(2^iQ)^{\frac{N+2s}q}\,(\ell(Q)^N\,\ell(2^iQ)^{2s})^{\frac 1{q'}} \\&= \ell(2^iQ)^{\frac Nq+ 2s}\,\ell(Q)^{\frac N{q'}}.
\end{align*}
For the last integral on the right hand side of \rf{eq3434}, we write
$$\int_{A_i\cap\supp g}|m_{2^{i}Q}f-m_{Q}f|\,dm\lesssim i\,m(2^iQ\cap \supp g) \leq i\,\ell(Q)^N\,\ell(2^iQ)^{2s}.$$
Therefore,
\begin{align*}
I_{22}&\lesssim\sum_{i=1}^\infty \frac{\ell(Q)^{2s}}{\ell(2^iQ)^{1+2s}}\,\Big(
\ell(2^iQ)^{\frac Nq+ 2s}\,\ell(Q)^{\frac N{q'}} + i\,\ell(Q)^N\,\ell(2^iQ)^{2s}\Big)\\&=\frac{\ell(Q)^{2s}\ell(Q)^{N+2s}}{\ell(Q)^{1+2s}}\sum_{i=1}^\infty \,
2^{i(\frac Nq-1)}(1+\frac{i}{2^i}).
\end{align*}
Choosing $q>N$, we get
$$I_{22}\lesssim \ell(Q)^{N+2s-1}.$$

\end{proof}
\subsection{Proof of Theorem \ref{hausdorffcontent}}
The inequality $\gamma_{\Theta,+}^s(E)\leq\gamma_{\Theta}^s(E)$ comes from the definition of the capacities. For the second inequality we assume $E$ to be compact and let $\{Q_j\}_j$ be a covering of $E$ by $s$-parabolic cubes $Q_j\subset\R^{N+1}$ with disjoint interiors. By a parabolic version of a well known lemma of Harvey and Polking (see \cite{Harvey-Polking}), there exist functions $g_j\in\mathcal{C}_0^\infty(2Q_j)$ satisfying $\sum_jg_j=1$ in a neighborhood of $\cup_jQ_j$, \eqref{admissiblephix} and \eqref{admissiblephit}. 

Let $\nu$ be a distribution with compact support contained in $E$ such that \eqref{potentials1} hold. Then, using lemma \ref{case1/21}
$$
\big|\langle \nu,1\rangle\big|\leq\sum_j|\langle\nu,g_j\rangle|\leq\sum_j\ell(Q_j)^{N+2s-1}.
$$
Thus, $\gamma_{\Theta}^s(E)\leq C{\mathcal H}^{N+2s-1}_{\infty}(E)$.

The second assertion in the theorem follows a standard argument that we reproduce for the reader's convenience. Suppose $E\subset\R^{N+1}$ is a Borel set with dim$_{H,p}(E)=\alpha>N+2s-1$. By a parabolic version of Frostman's Lemma (that can be proved by arguments analogous to classical ones replacing the usual dyadic lattice in $\R^{N+1}$ by a parabolic dyadic lattice), there exists a non-zero positive measure $\mu$ supported on $E$ such that $\mu(B(\bar x,r))\leq r^{\alpha}$ for all $\bar x\in\R^{N+1}$ and $r>0$. We have to show estimates \eqref{potentials1} with $\nu$ replaced by $\mu$.
\begin{itemize}
\item The parabolic BMO-estimate in \eqref{potentials1} is just a consequence of lemma \ref{growthbmo}. Therefore $\displaystyle{\|\partial_t^{1-\frac1{2s}}P_s*\mu\|_{*,p}\lesssim 1}$.
\item To prove $\displaystyle \|\Delta^{s-\frac12}P_s*\mu\|_{\infty}\lesssim 1$, apply the second property in lemma \ref{bounds},
\begin{align}\label{slaplacian}
|\Delta^{s-\frac12}P_s*\mu(\bar y)|&\lesssim \int\frac{d\mu(\bar x)}{|\bar x-\bar y|^{N+2s-1}}=\int_0^\infty\mu(\{\bar x: |\bar x-\bar y|^{-N-2s+1}\geq u\})du\nonumber\\&=\int_0^\infty\mu(\{\bar x: |\bar x-\bar y|\leq u^{\frac{-1}{N+2s-1}}\})du=\int_0^{\infty}\mu(B(\bar y,u^{\frac{-1}{N+2s-1}}))du\\&\approx\int_0^{\infty}\frac{\mu(B(\bar y,r))}{r^{N+2s}}dr\lesssim\int_0^{\mu(E)^{1/\alpha}}\frac{r^{\alpha}}{r^{N+2s}}dr+\int_{\mu(E)^{1/\alpha}}^\infty\frac{\mu(E)}{r^{N+2s}}dr\nonumber\\&\approx \mu(E)^{\frac{\alpha-(N+2s-1)}{\alpha}}\nonumber.
\end{align}

\end{itemize}
\qed

\vv

\section{$s-$capacities and $s$-growth in case $s\in(0,1/2)$.}\label{section-s2}

Now we introduce the $\gamma_\Theta^s$ capacities for $s\in (0,1/2)$. Given a compact set $E\subset\R^{N+1}$, we define 
\begin{equation}\label{caps2}
\gamma_{\Theta}^s(E)=\sup\{|\langle\nu,1\rangle|\},
\end{equation}
where the supremum is taken over all distributions $\nu$ supported on $E$ satisfying properties \eqref{potentials2} listed below. Before we write them, to get some intuition, let's consider the case $s\in(1/4,1/2]$. Then $2s\in(1/2,1]$. In this case, the second condition in \eqref{potentials1}, that is $\displaystyle \|\partial_t^{1-\frac1{2s}}(\nu*P_s)\|_{*,p}\lesssim 1$, has $1-\frac1{2s}\in(-1,0)$ and can be rewritten as (via Fourier transform with respect to the $t$ variable)  $\displaystyle \|\nu*P_s*K\|_{*,p}\lesssim 1$  with $\displaystyle K(t)=\frac1{|t|^{2-\frac1{2s}}}$.
 \vv
 
Hence, the distributions $\nu$ admissible for the capacity $\gamma_\Theta^s(E)$, $0<s<1/2$, will be the ones supported on $E$ and satisfying
\begin{equation}
 \label{potentials2}
 \displaystyle \|R_j^s(\nu*P_s)\|_\infty\leq 1,\quad 1\leq j\leq N\quad\mbox{and}\quad\|\nu*P_s*_t\;K\|_{*,p}\leq 1.
 \end{equation}
 Here $R_j^s$ is the Calder\'on-Zygmund operator with kernel $\displaystyle \frac{x_j}{|x|^{N+2s}}$, the symbol $*_t$ denotes convolution with respect to the $t$ variable and $K$ is the kernel 
 \begin{equation}\label{kernelk}
 K=\frac1{|t|^{\frac1M(M+1-\frac1{2s})}}*\overset{M)}{\cdots}*\frac1{|t|^{\frac1M(M+1-\frac1{2s})}},
 \end{equation}
 where $M=\left[\frac1{2s}\right]$ (for a real number $\lambda,$ $[\lambda]$ stands for its integer part). 
 \vv

  Notice that for $s\in(0,1/2)$, \begin{equation}\label{rj}(-\Delta)^s\vphi=\sum_{j=1}^NR_j^s(\partial_j\vphi).\end{equation}
 
 We say that a $\mathcal{C}^{M+1}$ function $\vphi$ is admissible for an $s-$parabolic cube $Q\subset\R^{N+1}$, $0<s<1/2$, (recall that we write $Q=Q_1\times I_Q$ with $Q\subset\R^N$ and $I_Q\subset\R$) if 
 \begin{equation}\label{admissiblex}
 \|\nabla_x\vphi\|_\infty\leq\frac1{\ell(Q)}
 \end{equation}
 and for $0\leq k\leq M+1$
 \begin{equation}\label{admissiblet}
 \|\partial_t^k\vphi\|_\infty\leq\frac1{\ell(I_Q)^k}=\frac1{\ell(Q)^{2sk}}.
 \end{equation}
 Now the growth condition in this case reads as follows.
\begin{lemma}\label{case01/2}
Let $s\in(0,1/2)$, $M=\left[\frac1{2s}\right]$ and set $\displaystyle K=\frac1{|t|^{\frac1M(M+1-\frac1{2s})}}*\overset{M)}{\cdots}*\frac1{|t|^{\frac1M(M+1-\frac1{2s})}}$.
Suppose that $\nu$ is a distribution satisfying $$\displaystyle \|R_j^s(\nu*P_s)\|_\infty\leq 1,\quad 1\leq j\leq N\quad\mbox{and}\quad\|\nu*P_s*_t\;K\|_{*,p}\leq 1.$$

Then, if $\vphi$ is an admissible function for an $s-$parabolic cube $Q\subset\R^{N+1}$, we have $$|\langle\nu,\vphi\rangle|\lesssim\ell(Q)^{N+2s-1}.$$
\end{lemma}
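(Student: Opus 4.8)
The strategy is to mimic the proof of Lemma \ref{case1/21}: use that $P_s$ is the fundamental solution of the $s$-fractional heat equation to write $\varphi = \Theta^s\varphi * P_s = (-\Delta)^s\varphi * P_s + \partial_t\varphi * P_s$, then move the two operators onto the potentials $\nu * P_s$. Concretely, I would start from
$$|\langle\nu,\varphi\rangle| = |\langle\nu,\Theta^s\varphi * P_s\rangle| \le |\langle\nu*P_s,(-\Delta)^s\varphi\rangle| + |\langle\nu*P_s,\partial_t\varphi\rangle| =: I_1 + I_2,$$
and estimate $I_1$ and $I_2$ separately. For $I_1$, I would use the identity \eqref{rj}, namely $(-\Delta)^s\varphi = \sum_{j=1}^N R_j^s(\partial_j\varphi)$, to rewrite $I_1 = |\sum_j \langle R_j^s(\nu*P_s), \partial_j\varphi\rangle|$, bounded by $\sum_j \|R_j^s(\nu*P_s)\|_\infty \int |\partial_j\varphi|$. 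Since $\varphi$ is supported on $Q$ and satisfies $\|\nabla_x\varphi\|_\infty \le \ell(Q)^{-1}$ (condition \eqref{admissiblex}), and $Q = Q_1\times I_Q$ has $|Q| = \ell(Q)^N \ell(Q)^{2s} = \ell(Q)^{N+2s}$, we get $\int_Q |\partial_j\varphi|\,dm \lesssim \ell(Q)^{-1}\ell(Q)^{N+2s} = \ell(Q)^{N+2s-1}$, which gives $I_1 \lesssim \ell(Q)^{N+2s-1}$.

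The term $I_2$ is the delicate one, and is where the somewhat elaborate definition of the kernel $K$ in \eqref{kernelk} comes in. The point is that $\|\nu*P_s*_t K\|_{*,p} \le 1$ controls only a BMO-type norm of $\nu*P_s$ after applying an $M$-fold convolution on the $t$-variable, so I need to realize $\partial_t\varphi$ as (a constant times) that same operator applied to some well-behaved test function $g$. Taking Fourier transforms in $t$, the multiplier of $K$ is $c\,|\tau|^{-(M+1-\frac1{2s})}$ (an $M$-fold product of $|\tau|^{-\frac1M(M+1-\frac1{2s})}$), and $\partial_t$ has multiplier $i\tau$; hence $\partial_t\varphi = c\, K^{-1} *_t (\text{something})$ should be rearranged as $\partial_t\varphi = c\, \partial_t^{M+1-\frac1{2s}+1}(\varphi) *_t \widetilde K$ — more precisely, I would set $g := \partial_t^{M+1}\varphi *_t |t|^{M - \frac1{2s}}$ (or the appropriate iterated fractional antiderivative) so that, after matching multipliers, $\partial_t\varphi = c\,(K *_t (\text{reflection}))^{\vee}$ acting so that $\langle\nu*P_s,\partial_t\varphi\rangle = c\,\langle \nu*P_s*_t K,\, g\rangle$. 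Then, writing $f := \nu*P_s*_t K$ and exploiting $\int g\,dm = 0$ (which follows from $\int \partial_t^j\varphi\,dt = 0$ for $j\ge 1$), I would split $\langle f,g\rangle = \langle f - m_{2Q}f, g\rangle = \int_{2Q}(f-m_{2Q}f)g + \int_{(2Q)^c}(f-m_{2Q}f)g$. The near part is handled by the inner bound $|g|\lesssim \ell(Q)^{-1}$ on $2Q$ together with the BMO bound on $f$, exactly as for $I_{21}$ in Lemma \ref{case1/21}; the far part uses the decay $|g(x,t)|\lesssim \ell(Q)^{2s(\text{some power})}/|t - c_Q|^{1 + (M+1-\frac1{2s})}$ coming from the vanishing moments of $\partial_t^{M+1}\varphi$ (an $M+1$-fold Taylor cancellation against the smooth kernel), summed over parabolic annuli $A_i = 2^iQ\setminus 2^{i-1}Q$ with John–Nirenberg and Hölder, just as in \eqref{eq3434}.

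The main obstacle I anticipate is precisely the bookkeeping in $I_2$: getting the Fourier-side identity between $\partial_t\varphi$ and $K *_t g$ exactly right (the exponents $\frac1M(M+1-\frac1{2s})$ are chosen so that the $M$-fold convolution $K$ has the integrability needed for each factor $|t|^{-\alpha}$ with $0 < \alpha < 1$, and so that the total order matches $\partial_t$ composed with $M+1$ integrations), and then verifying that the resulting $g$ has enough smoothness, the right number of vanishing moments, the inner bound $\|g\|_\infty\lesssim \ell(Q)^{-1}$ on a fixed dilate of $Q$, and the claimed polynomial decay at infinity. The smoothness requirement is why $\varphi$ is assumed $\mathcal C^{M+1}$ and why \eqref{admissiblet} controls $\partial_t^k\varphi$ for all $0\le k\le M+1$. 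Once $g$ is constructed and these four properties are checked, the rest of the estimate for $I_2$ is a routine repetition of the annular argument in the proof of Lemma \ref{case1/21}, yielding $I_2\lesssim \ell(Q)^{N+2s-1}$ after choosing the Hölder exponent $q > N$, and combining with the bound for $I_1$ completes the proof.
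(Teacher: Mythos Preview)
Your plan is essentially the paper's own argument: the split into $I_1$ and $I_2$, the handling of $I_1$ via \eqref{rj} and \eqref{admissiblex}, the factorization $\partial_t\vphi = c\,g*_tK$, the properties $\int g\,dm=0$, $|g|\lesssim\ell(Q)^{-1}$ near $Q$, polynomial decay off $Q$, and the BMO annular argument from Lemma \ref{case1/21} are all exactly what the paper does.

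There is one genuine gap in your bookkeeping for $I_2$. Your proposed $g=\partial_t^{M+1}\vphi*_t|t|^{M-\frac1{2s}}$ has Fourier multiplier $c\,(i\tau)^{M+1}|\tau|^{\frac1{2s}-M-1}$, while $\widehat K\propto|\tau|^{1-\frac1{2s}}$; the product is $c\,i^{M+1}(\operatorname{sign}\tau)^{M+1}|\tau|$, which equals a constant times $i\tau$ only when $M$ is even. For $M$ odd you pick up an unwanted Hilbert transform, so $g*_tK$ is \emph{not} a constant multiple of $\partial_t\vphi$. The paper handles this by splitting into cases: for $M$ even it uses exactly your $g$, while for $M$ odd it sets $g=\partial_t^{\frac1{2s}-M}(\partial_t^M\vphi)$, whose multiplier $c\,(i\tau)^M|\tau|^{\frac1{2s}-M}$ does match because $M-1$ is then even. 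You should build this parity distinction into your construction.

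A minor point: your stated far-field decay exponent $1+(M+1-\tfrac1{2s})$ is not what one actually gets. Integrating $g$ by parts back to $\partial_t\vphi*_t|t|^{-1/(2s)}$ (for $t\notin 2I_Q$) and using one cancellation gives $|g(x,t)|\lesssim\ell(Q)^{2s}|t-c_Q|^{-(1+\frac1{2s})}$, which is the paper's bound. This slip is harmless, though, since any decay exponent strictly greater than $1$ makes the annular sum converge (after choosing $q$ large enough) and yields the same final power $\ell(Q)^{N+2s-1}$.
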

\vv

Notice that if $s\in(1/4,1/2]$, for example, then $2s\in(1/2,1]$ and $\displaystyle M=1$. In this case $\displaystyle K=\frac1{|t|^{2-\frac1{2s}}}$ and the condition $\displaystyle \|\nu*P_s*K\|_{*,p}\leq 1$ can be rewritten (via Fourier transform with respect to the $t$ variable) as $\displaystyle \|\partial_t^{1-\frac1{2s}}(\nu*P_s)\|_{*,p}\lesssim 1$ (here $1-\frac1{2s}\in(-1,0)$), which is the same condition as in theorem \ref{case1/21}.
\begin{proof}
\begin{align*}
|\langle\nu,\vphi\rangle|&=|\langle\nu,\Theta^s\vphi*P_s\rangle|\leq|\langle\nu*P_s,(-\Delta)^s\vphi\rangle|+|\langle\nu*P_s,\partial_t\vphi\rangle|=I_1+I_2.
\end{align*}
Using \rf{rj} and the $s-$admissibility of $\vphi$, $$I_1\leq\sum_{j=1}^N\|R_j^s(\nu*P_s)\|_{\infty}\int_Q|\partial_j\vphi|\;dm\lesssim\ell(Q)^{N+2s-1}.$$
To estimate $I_2$, write $Q=Q_1\times I_Q$, with $Q_1\subset\R^N$ and $I_Q\subset\R$, and let $c_Q$ be the center of the interval $I_Q$. 
We claim that $\displaystyle\partial_t\vphi=g*_tK,$ with $\int g\;dm=0$,
\begin{equation}\label{inn}
|g(x,t)|\lesssim\ell(Q)^{-1},\;\;t\in 4I_Q.
\end{equation}
and
\begin{equation}\label{outt}
|g(x,t)|\lesssim\frac{\ell(Q)^{2s}}{|t-c_Q|^{1+\frac1{2s}}},\;\;t\in(2I_Q)^c.
\end{equation}
Once \eqref{inn} and \eqref{outt} are available, since $\displaystyle \|\nu*P_s*K\|_{*,p}\leq 1$, arguing as in the proof of theorem \ref{case1/21},  we get
$$I_2=|\langle\nu*P_s,g*K\rangle|=|\langle\nu*P_s*K,g\rangle|\lesssim \ell(Q)^{N+2s-1}.$$
Indeed, since $\int g\;dm=0$, writing $f=\nu*P_s*K$, we have
\begin{align*}
I_2&=|c||\langle f,g\rangle|=\left|c\int(f-m_Qf)\;g\;dm\right|\\&\lesssim\int_{2Q}|f-m_Qf|\;|g|\;dm+\int_{\R^{N+1}\setminus 2Q}(f-m_Qf)\;g\;dm\\&=I_{21}+I_{22}.
\end{align*}
Due to \eqref{inn}, $$I_{21}\lesssim\|f\|_{*,p}\,\ell(Q)^{N+2s}\ell(Q)^{-1}\leq\ell(Q)^{N+2s-1}.$$
For $I_{22}$, we split the domain of integration in annuli. Write $A_i= 2^{i} Q\setminus 2^{i-1} Q$ for $i\geq1$.
Remark that for an $s$-parabolic cube $Q = Q_1\times I_Q$, we denote
$$2^i Q = 2^i Q_1 \times 2^{2si} I_Q,$$
so that $2^iQ$ is a parabolic cube too (notice that if $Q$ is centered at the origin and we consider the parabolic dilation $\delta_\lambda(x,t) = (\lambda x,\lambda^{2s}t)$, $\lambda>0$, we have $2^iQ = \delta_{2^i}(Q)$).
Then, using the decay of $g$ given by \rf{outt}, we get
\begin{equation}\label{eq34342}
I_{22}\lesssim\sum_{i=1}^\infty \frac{\ell(Q)^{2s}}{\ell(2^iQ)^{1+2s}}\left(\int_{A_i\cap \supp g}|f-m_{2^{i}Q}f|\,dm+ \int_{A_i\cap \supp g}|m_{2^{i}Q}f-m_{Q}f|\,dm\right).
\end{equation}
To estimate the first integral on the right hand side, observe that by definition 
$\supp g\subset Q_1\times \R$. Using H\"older's inequality with some 
exponent $q\in(0,\infty)$ to be chosen in a moment and the fact that
$f\in BMO_p$ (together with John-Nirenberg), then we get:
\begin{align*}
\int_{A_i\cap \supp g}|f-m_{2^{i}Q}f|\,dm 
&\leq \left(\int_{2^iQ}|f-m_{2^{i}Q}f|^q\,dm\right)^{\frac1q} \,m(\supp g\cap 2^iQ)^{\frac1{q'}}\\
& \lesssim \ell(2^iQ)^{\frac{N+2s}q}\,(\ell(Q)^N\,\ell(2^iQ)^{2s})^{\frac 1{q'}} \\&= \ell(2^iQ)^{\frac Nq+ 2s}\,\ell(Q)^{\frac N{q'}}.
\end{align*}
For the last integral on the right hand side of \rf{eq34342}, we write
$$\int_{A_i\cap\supp g}|m_{2^{i}Q}f-m_{Q}f|\,dm\lesssim i\,m(2^iQ\cap \supp g) \leq i\,\ell(Q)^N\,\ell(2^iQ)^{2s}.$$
Therefore,
\begin{align*}
I_{22}&\lesssim\sum_{i=1}^\infty \frac{\ell(Q)^{2s}}{\ell(2^iQ)^{1+2s}}\,\Big(
\ell(2^iQ)^{\frac Nq+ 2s}\,\ell(Q)^{\frac N{q'}} + i\,\ell(Q)^N\,\ell(2^iQ)^{2s}\Big)\\&=\frac{\ell(Q)^{2s}\ell(Q)^{N+2s}}{\ell(Q)^{1+2s}}\sum_{i=1}^\infty \,
\left(2^{i(\frac Nq-1)}+\frac i{2^i}\right).
\end{align*}
Choosing $q>N$, we get
$$I_{22}\lesssim \ell(Q)^{N+2s-1}.$$
We still have to show claims \eqref{inn} and \eqref{outt}. Notice that (by taking Fourier transform with respect to $t$), $$\partial_t\vphi=(\partial_t^{1/2})^{2M}\partial_t\vphi*_t\frac1{|t|^{\frac1{2s}-M}}*_tK=g*_tK,$$This identity can be understood better if we distinguish two cases.
\begin{enumerate}
\item Case $M$ even. We can write $$\partial_t\vphi=\partial^{M+1}_t\vphi*_t\frac1{|t|^{\frac1{2s}-M}}*_tK.$$
Set $\displaystyle g=\partial^{M+1}_t\vphi*_t\frac1{|t|^{\frac1{2s}-M}}$. Then $\displaystyle \partial_t\vphi=g*K$. The zero mean of $\partial_t\vphi$ (integrating with respect to $t$) implies that $|g(x,t)|$ decays at most like $\displaystyle |t|^{-1-\frac1{2s}}$ at infinity. To see this notice that for $t\notin 2I_Q$ we have
\begin{align}\label{out2}
|g(x,t)|&=\left|\left(\partial_u^{M+1}\vphi*\frac1{|u|^{\frac1{2s}-M}}\right)(x,t)\right|=\left|\int_{I_Q}\frac{\partial_u\vphi(x,u)}{|t-u|^{\frac1{2s}}}du\right|\nonumber\\&=\left|\int_{I_Q}\partial_u\vphi(x,u)\left(\frac1{|t-u|^{\frac1{2s}}}-\frac1{|t-c_Q|^{\frac1{2s}}}\right)du\right|\\&\lesssim\frac{\ell(I_Q)}{|t-c_Q|^{1+\frac1{2s}}}\int_{I_Q}|\partial_u\vphi(x,u)|du\lesssim\frac{\ell(I_Q)}{|t-c_Q|^{1+\frac1{2s}}}=\frac{\ell(Q)^{2s}}{|t-c_Q|^{1+\frac1{2s}}}.\nonumber
\end{align}
And for $t\in4I_Q$, \begin{equation*}\label{in2}|g(x,t)|\lesssim\|\partial^{M+1}_u\vphi\|_\infty\ell(I_Q)^{1-(\frac1{2s}-M)}\lesssim\ell(I_Q)^{-\frac1{2s}}=\ell(Q)^{-1}.\end{equation*}

\item Case $M$ odd.  We can write $$\partial_t\vphi=\partial_t^{\frac1{2s}-M}\partial_t^M\vphi*K=g*K,$$ 
the last equality being a definition for $g$. Notice that for $t\notin 2I_Q$, \begin{align*}\label{out3}
|g(x,t)|&=\left|\int_{\R}\frac{\partial_u^M\varphi(x,u)-\partial_u^M\varphi(x,t)}{|u-t|^{\frac1{2s}-M+1}}du\right|=\left|\int_{I_Q}\frac{\partial_u^M\varphi(x,u)}{|u-t|^{\frac1{2s}-M+1}}du\right|\nonumber\approx\left|\int_{I_Q}\frac{\partial_u\varphi(x,u)}{|u-t|^{\frac1{2s}}}du\right|.\end{align*}
So arguing as in \eqref{out2}, 
$$|g(x,t)|\lesssim\frac{\ell(Q)^{2s}}{|t-c_Q|^{1+\frac1{2s}}}.$$

And for $t\in 4I_Q$,
\begin{align*}
|g(x,t)|&\leq\int_{5I_Q}\frac{|\partial_u^M\varphi(x,u)-\partial_u^M\varphi(x,t)|}{|u-t|^{\frac1{2s}-M+1}}du+\|\partial_t^M\varphi\|_\infty\int_{(5I_Q)^c}\frac{du}{|u-s|^{\frac1{2s}-M+1}}=T_1+T_2.
\end{align*}
To estimate $T_1$, apply the mean value theorem and \eqref{admissiblet} to get
$$T_1\leq\frac1{\ell(I_Q)^{M+1}}\int_{5I_Q}\frac{du}{|u-t|^{\frac1{2s}-M}}\lesssim\ell(I_Q)^{-\frac1{2s}}=\ell(Q)^{-1}.$$
Integrating and using \eqref{admissiblet} we get
$$T_2\leq\frac1{\ell(I_Q)^M}\int_{(5I_Q)^c}\frac{du}{|u-t|^{\frac1{2s}-M+1}}\lesssim\ell(Q)^{-\frac1{2s}}=\ell(Q)^{-1}.$$
Therefore for $t\in 4I_Q$,
\begin{equation*}\label{in3}
|g(x,t)|\lesssim\ell(Q)^{-1}.
\end{equation*}
\end{enumerate}
Therefore the claims \eqref{inn} and \eqref{outt} are proved and also the lemma.
\end{proof}

Now we can state a result analogous to the first statement of theorem \ref{hausdorffcontent} but for $s\in (0,1/2)$. 

\begin{theorem}\label{hausdorffcontent2}
 Let $s\in (0,1/2)$. For every Borel set $E\subset\R^{N+1}$, $$\gamma_{\Theta,+}^s(E)\leq\gamma^s_{\Theta}(E)\lesssim {\mathcal H}^{N+2s-1}_{\infty}(E).$$ 
 \end{theorem}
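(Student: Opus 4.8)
The plan is to follow exactly the argument used for the first inequality in Theorem \ref{hausdorffcontent}, since the only structural ingredient it requires is the growth estimate of Lemma \ref{case01/2}, which is available for $s\in(0,1/2)$ as well. Concretely, first I would reduce to the case where $E$ is compact (the Hausdorff content and the capacity are both monotone and the supremum defining $\gamma_\Theta^s$ can be approximated by distributions supported on compact subsets, or one simply notes the estimate for compact $E$ suffices by inner regularity of $\HH_\infty^{N+2s-1}$). The inequality $\gamma_{\Theta,+}^s(E)\le\gamma_\Theta^s(E)$ is immediate from the definitions, so the whole content is $\gamma_\Theta^s(E)\lesssim\HH^{N+2s-1}_\infty(E)$.

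Next I would fix an arbitrary covering $\{Q_j\}_j$ of $E$ by $s$-parabolic cubes with disjoint interiors and with $\sum_j\ell(Q_j)^{N+2s-1}\le 2\,\HH^{N+2s-1}_\infty(E)$ (passing to a finite subcover by compactness of $E$). By the parabolic version of the Harvey--Polking lemma \cite{Harvey-Polking} I would produce functions $g_j\in\mathcal C_0^\infty(2Q_j)$ with $\sum_j g_j=1$ on a neighbourhood of $E$ and satisfying the admissibility bounds \eqref{admissiblex} and \eqref{admissiblet} (one must check that Harvey--Polking yields control of all the derivatives $\partial_t^k g_j$ up to order $M+1$ with the correct powers of $\ell(Q_j)$; this is routine since one may take $g_j$ to be suitable dilates/translates of a fixed bump adapted to the parabolic scaling, just as in the paragraph preceding \eqref{standardtestok}). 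Then for any distribution $\nu$ supported on $E$ satisfying \eqref{potentials2} I would write $\langle\nu,1\rangle=\sum_j\langle\nu,g_j\rangle$ and apply Lemma \ref{case01/2} to each term, obtaining
$$|\langle\nu,1\rangle|\le\sum_j|\langle\nu,g_j\rangle|\lesssim\sum_j\ell(Q_j)^{N+2s-1}\lesssim\HH^{N+2s-1}_\infty(E).$$
Taking the supremum over all admissible $\nu$ and then the infimum over all coverings gives $\gamma_\Theta^s(E)\lesssim\HH^{N+2s-1}_\infty(E)$, which is the claim.

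I do not expect a genuine obstacle here, since Theorem \ref{hausdorffcontent2} is deliberately weaker than Theorem \ref{hausdorffcontent} — there is no second ``Frostman'' half to prove, precisely because (as the introduction explains) the authors could not establish the lower bound for $0<s<1/2$. The only point requiring a little care is verifying that the Harvey--Polking partition of unity can be chosen admissible in the sense of \eqref{admissiblex}--\eqref{admissiblet}, i.e.\ that one really controls $\|\partial_t^k g_j\|_\infty\lesssim\ell(Q_j)^{-2sk}$ for $0\le k\le M+1$ and $\|\nabla_x g_j\|_\infty\lesssim\ell(Q_j)^{-1}$ simultaneously; this follows by covering each $Q_j$ by boundedly many parabolic dyadic subcubes and gluing fixed smooth bumps, exactly as in the analogous constructions used earlier in the paper for $s=1/2$ and for $s\in(1/2,1)$. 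With that in hand the proof is a two-line consequence of Lemma \ref{case01/2}.

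\begin{proof}
The inequality $\gamma_{\Theta,+}^s(E)\le\gamma_\Theta^s(E)$ is immediate from the definitions. For the second inequality we may assume $E$ is compact. Let $\{Q_j\}_j$ be a covering of $E$ by $s$-parabolic cubes with disjoint interiors; by compactness we may take the family finite. By a parabolic version of the Harvey--Polking lemma \cite{Harvey-Polking}, there exist functions $g_j\in\mathcal C_0^\infty(2Q_j)$ with $\sum_j g_j=1$ in a neighbourhood of $\bigcup_j Q_j$ and satisfying \eqref{admissiblex} and \eqref{admissiblet}; hence each $g_j$ is admissible for the $s$-parabolic cube $Q_j$. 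Let $\nu$ be a distribution supported on $E$ satisfying \eqref{potentials2}. Then, by Lemma \ref{case01/2},
$$\big|\langle\nu,1\rangle\big|\le\sum_j|\langle\nu,g_j\rangle|\lesssim\sum_j\ell(Q_j)^{N+2s-1}.$$
Taking the infimum over all such coverings and then the supremum over all admissible $\nu$, we obtain $\gamma_\Theta^s(E)\lesssim\HH^{N+2s-1}_\infty(E)$.
\end{proof}
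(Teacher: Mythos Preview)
Your proof is correct and follows essentially the same route as the paper, which simply states that the argument is analogous to that of Theorem \ref{hausdorffcontent} with Lemma \ref{case01/2} replacing Lemma \ref{case1/21}. The only minor remark is that the $g_j$ are supported on $2Q_j$, so strictly speaking they are admissible for $2Q_j$ rather than $Q_j$, but this is harmless since $\ell(2Q_j)^{N+2s-1}\approx\ell(Q_j)^{N+2s-1}$.
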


The proof is analogous to the one of theorem \ref{hausdorffcontent} but using lemma \ref{case01/2} instead of lemma \ref{case1/21}. We leave it for the reader. Although we think that the second statement of theorem \ref{hausdorffcontent} should hold for the case $0<s<1/2$, we are still having some technical problems to show it.  
\vvv

\end{document}